\DeclareRobustCommand{\erase}{\bgroup\markoverwith{\textcolor{blue}{\rule[.5ex]{2pt}{0.4pt}}}\ULon}
\newtheorem{Theorem}{\bf Theorem}[section]
\newtheorem{Proposition}{\bf Proposition}[section]
\newtheorem{Lemma}{\bf Lemma}[section]
\theoremstyle{definition}
\newtheorem{Remark}{\bf Remark}[section]
\newtheorem{Definition}{\bf Definition}[section]
\newcommand{\loc}{{\mathrm{loc}}}
\renewcommand{\div}{\operatorname{div}}
\newcommand{\R}{\mathbb{R}}
\newcommand{\Z}{\mathbb{Z}}
\newcommand{\II}{I\!I}
\newcommand{\III}{I\!I\!I}
\newcommand{\IV}{I\!V}
\newcommand{\bracesize}[1]{\bBigg@{#1}}
\numberwithin{equation}{section}
\begin{document}

\title[]{Large time behavior of exponential surface diffusion flows on $\mathbb{R}$}

\author[Y.~Giga]{Yoshikazu Giga}
\address[Y.~Giga]{Graduate School of Mathematical Sciences, The University of Tokyo, 3-8-1 Komaba, Meguro-ku, Tokyo 153-8914, Japan.}
\email{labgiga@ms.u-tokyo.ac.jp}

\author[M.~G{\"o}{\ss}wein]{Michael G{\"o}{\ss}wein}
\address[M.~G{\"o}{\ss}wein]{Department of mathematics, University of Duisburg-Essen, Thea-Leymann-Stra{\ss}e 9, 45127, Essen, Germany.}
\email{michael.goesswein@uni-due.de}

\author[S.~Katayama]{Sho Katayama}
\address[S.~Katayama]{Graduate School of Mathematical Sciences, The University of Tokyo, 3-8-1 Komaba, Meguro-ku, Tokyo 153-8914, Japan.}
\email{katayama-sho572@g.ecc.u-tokyo.ac.jp}


\begin{abstract}
We consider a surface diffusion flow of the form $V=\partial_s^2f(-\kappa)$ with a strictly increasing smooth function $f$ typically, $f(r)=e^r$, for a curve with arc-length parameter $s$, where $\kappa$ denotes the curvature and $V$ denotes the normal velocity.
 The conventional surface diffusion flow corresponds to the case when $f(r)=r$.
 We consider this equation for the graph of a function defined on the whole real line $\mathbb{R}$.
 We prove that there exists a unique global-in-time classical solution provided that the first and the second derivatives are bounded and small.
 We further prove that the solution behaves like a solution to a self-similar solution to the equation $V=-f'(0)\kappa$.
 Our result justifies the explanation for grooving modeled by Mullins (1957) directly obtained by Gibbs--Thomson law without linearization of $f$ near $\kappa=0$.
\end{abstract}

\maketitle


\section{Introduction} \label{Sint} 

\subsection{A general surface diffusion flow} \label{SSSD}
We consider a general surface diffusion flow
\[
	{V=\partial_s^2f(-\kappa)}
\]
with a given increasing function $f$, typically $f(r)=e^r$, for a graph-like curve $\Gamma_t=\left\{ y=u(x,t) \right\}$ defined on the whole real line $\mathbb{R}$.
 Here $V$ denotes the upward normal velocity and $\kappa$ denotes the upward curvature;
 $\partial_s$ denotes the derivative with respect to arc-length parameter so that $\partial_s^2$ is the Laplace--Beltrami operator on a curve.
 If $f(r)=r$, this gives us the usual surface diffusion flow
\[
	{V=-\partial_s^2\kappa.}
\]
Since $\partial_s=(1+u_x^2)^{-1/2}\partial_x$, $\kappa=-u_{xx}/(1+u_x^2)^{1/2}$ and $V=u_t/(1+u_x^2)^{1/2}$ with $u_t=\partial u/\partial t$, $u_x=\partial u/\partial x$, $u_{xx}=\partial_x^2u$, the resulting initial value problem for $u$ is of the form
    \begin{equation}\label{SDE'}
  u_t=\left(\frac{1}{(1+u_x^2)^{1/2}}\left(f\left(-\frac{u_{xx}}{(1+u_x^2)^{3/2}}\right)\right)_x\right)_{x}\ \textrm{in}\ \R\times(0,T),\ u(\cdot,0)=u_0\ {\textrm{in}\ \R.}
\end{equation}

Let us first explain a physical background of the equation $V=\partial_s^2f(-\kappa)$.
 W.\ W.\ Mullins \cite{Mu2} modeled a relaxation dynamics of crystal surface such as gold (Au) in low temperature. {(See also e.g., {for derivation of the model} \cites{KM, LLMM}.)}
 Let $\Gamma_t$ denote the crystal surface at time $t$ and $V$ denote its normal velocity.
 The evolution law is 
of the form
\[
	V = -\operatorname{div}_\Gamma j,
\]
where $j$ represents mass flux and $\operatorname{div}_\Gamma$ denotes the surface divergence. 
 We postulate that
\[
	j = - M \nabla_\Gamma \rho,
\]
where $M>0$ is a kinetic coefficient and $\nabla_\Gamma$ denotes the surface gradient;
 $\rho$ denotes the density.
 The Gibbs--Thomson law is of the form
\[
	\log \left( \frac{\rho}{\rho_0} \right) = \beta \frac{\delta E}{\delta\Gamma},
\]
where $\rho_0$, $\beta$ are positive constants;
 $\rho_0$ denotes a density of equilibrium and $\beta=(k\theta)^{-1}$ where $k$ is the Boltzmann constant and $\theta$ denotes the temperature.
 $\delta E/\delta\Gamma$ denotes the variation of surface energy $E$.
 If the surface energy $E$ is an area, $-\delta E/\delta\Gamma$ is the (twice) mean curvature $\kappa$ of $\Gamma$.
 In the case of {the diffusion limited model (\cite{KM},\cite{LLMM})}, where $M$ is assumed to be constant (independent of the normal $\mathbf{n}$ of $\Gamma$), these three relations yield an equation
\[
	V = M \Delta_\Gamma \left( \rho_0 \exp(-\beta\kappa) \right),
\]
where $\Delta_\Gamma=\operatorname{div}_\Gamma \nabla_\Gamma$ denotes the Laplace--Beltrami operator.
 This is an exponential type of surface diffusion flow.
 We may rescale space and time so that $M\rho_0=1$ and that $\beta=1$.
 The resulting equation is
\begin{equation}\label{expSDE}
	V = \Delta_\Gamma f(-\kappa)
\end{equation}
with $f(r)=e^r$. If one linearizes {$f(-\kappa)$} around $\kappa=0$, then {\eqref{expSDE} becomes}
\begin{equation}\label{convSDE}
V=-{\, f'(0)}\Delta_\Gamma\kappa,
\end{equation}
  which is nothing but a conventional surface diffusion flow.
 In one-dimensional setting, this is \erase{nothing but} $V=\partial_s^2f(-\kappa)$.
 Mullins \cite{Mu2} also modeled evolution of crystal surface in evaporation-condensation dynamics.
 A typical example is Magnesium (Mg) in low temperature.
 In this case, the Gibbs--Thomson law is of the form
\[
	\log \left( \frac{p}{p_0} \right) = \beta \frac{\delta E}{\delta\Gamma},
\]
where $p$ denotes the pressure and $p_0$ denotes the atmospheric pressure.
 The proposed evolution law is of the form
\[
	V = m (p_0 - p)
\]
with a positive constant $m$.
 Thus the resulting equation is
\[
	V = mp_0 \left( 1-\exp(-\beta\kappa) \right)
\]
if $\delta E/\delta\Gamma=-\kappa$.
 Linearizing around $\kappa=0$, we obtain $V=mp_0\beta\kappa$, which is nothing but the mean curvature flow \erase{equation} originally proposed by Mullins \cite{Mu1} to model grain boundary motion in the process of annealing.

  The existence and large time behavior of a solution to the (conventional) surface diffusion flow \eqref{convSDE} have been studied in various settings. Among other results, Escher, Mayer, and Simonett \cite{EMS} proved that if the initial surface $\Gamma_0$ is closed and sufficiently close to a sphere $S$, then a global-in time solution $\Gamma_t$ to the equation \eqref{convSDE} uniquely exists and converges to some sphere $S'$, which is a stationary solution to \eqref{convSDE}, at an exponential rate. We note that $S'$ may be different from $S$. Their method is a stability analysis using the Lyapunov–-Schmidt reduction (to deal with the indefiniteness of spheres.) Asymptotic stability of the spheres for the conventional surface diffusion flow has been studied by many researchers, see e.g., \cites{EM,LSS,Whe1,Whe2,Whe3}. In contrast, {for} the exponential surface diffusion flow{, such problems seem to remain open.}
  
It is revealed by several mathematicians that a different type of asymptotic behavior may occur when the solution surface $\Gamma_t$ is unbounded. Koch and Lamm \cite{KL} proved the existence of a unique global-in-time solution $\Gamma_t$ to the equation \eqref{convSDE} for a graph-like surface $\Gamma_t=\{(x,u(x,t));\ x\in\R^n\}$. They proved that under the initial condition $u(\cdot,0)=u_0$ with the quantity $\|\nabla u_0\|_{L^\infty(\R^n)}$ sufficiently small, the equation \eqref{convSDE} possesses a unique smooth global-in-time solution $u$ with the Sobolev-type norm
\begin{equation}\label{KLnorm}
  \|u\|_{X_\infty}:=\|\nabla_x u\|_{L^\infty(\R^n\times\R)}+\sup_{x\in\R^n,R>0}R^\frac{2}{n+6}\|\nabla_x^2u\|_{L^{n+6}(B_R(x)\times(R^4/2,R^4))}
\end{equation}
sufficiently small.
 Here, we remark that {they have chosen the function space $C^{0,1}(\R^N)$, that is the space of all (possibly unbounded) Lipschitz continuous functions, for the initial value $u_0$ and 
$X_\infty$ for solutions $u$, so that they are} invariant under the scaling
  \begin{equation}\label{scaling}
    u\mapsto u^\sigma(x,t):=\sigma^{-1/4} u(\sigma^{1/4}x,\sigma t)
  \end{equation}
for any $\sigma>0${.
 Since the equation \eqref{convSDE} is invariant under this scaling,} the solution surface $\Gamma_t=\{(x,u(x,t));\ x\in\R^n\}$ for the flow \eqref{convSDE} is {scale-invariant provided that the initial data is invariant under this scaling}. This result, in particular, implies that for any Lipschitz function $A$ on $S^{N-1}$ with $\|\nabla_{S^{N-1}} A\|_{L^\infty(S^{N-1})}$ sufficiently small, the equation \eqref{convSDE} possesses a unique smooth \textit{self-similar solution} $u=u_A$, that is a solution which is expressed by 
  \[
  u_A(x,t)=t^{1/4}\Phi_A(t^{-1/4}x)\footnote{This is equivalent to the invariance under the scaling \eqref{scaling} for any $\sigma>0$.},
  \]
  with the condition
  \[
  \lim_{r\to\infty}\sup_{\omega\in S^{N-1}}|r^{-1}\Phi_A(r\omega)-A(\omega)|=0
  \]
  and the norm $\|\nabla u\|_{X_\infty}$ sufficiently small. It is proved by solving the equation \eqref{convSDE} starting with the graph of the function $u_{0A}=|x|{A(x/|x|)}$, which is invariant under the scaling \eqref{scaling}.
 The idea to construct a self-similar solution to an evolution equation by solving the initial value problem with homogeneous initial data goes back to \cite{GM}, where the authors constructed non-trivial self-similar solutions to the vorticity equations which are formally equivalent to the Navier–Stokes equations.
  
  Later on, Du and Yip \cite{DY} obtained the result on a large time behavior of solutions to the flow \eqref{convSDE} of \textit{asymptotically self-similar} type for $u_0$ with a small gradient.
 Among other results, they proved that if $u_0-u_{0A}$ is bounded and the quantities $\|\nabla u_0\|_{L^\infty(\R^n)}$ and $\|\nabla(u_0-u_{0A})\|_{L^\infty(\R^n)}$ are sufficiently small, the solution $u$ constructed in \cite{KL} satisfies
  \[
  \lim_{t\to\infty}\|t^{-1/4}u(t^{1/4}\,\cdot\,,t)-\Phi_A\|_{L^\infty(K)}=0
  \]
for any compact set $K\subset\R^n$.
 This means the profile of the solution $u(x,t)$ tends to that of the self-similar solution $u_A$ as $t\to\infty$. {We also mention \cite{AG}, in which the authors proved the existence and stability of a globally bounded self-similar profile of the equation \eqref{SDE'} in $(0,\infty)\times(0,\infty)$ with a contact angle condition $u_x(0,\cdot)=\beta$ on $(0,\infty)$ and a linearized no-flux condition $u_{xxx}(0,\cdot)=0$ on $(0,\infty)$ with the contact angle $\beta$ sufficiently small.}
 These results on convergence to self-similar solutions are obtained in the spirit of \cite{GGS}, where the vorticity equations are mainly discussed.
 For more results on the existence and large-time behavior of a solution to a surface diffusion flow \eqref{convSDE}, see e.g. {\cites{Asa, AG, EMS, EM, GG1, GG2, GIK, Goess, IK, LSS, LS1, LS2, Whe1, Whe2, Whe3}} and references therein.

  On the other hand, for an exponential surface diffusion flow \eqref{expSDE} and its graphical form \eqref{SDE'}, {the existence and large time behavior of solutions are widely open. In particular, for the problem \eqref{SDE'}, the lack of the invariance under the scaling \eqref{scaling}, due to non-degenerate second and higher order derivatives of the curvature nonlinearity $f$, prevents us from using the norm $\|u\|_{X_\infty}$ in \cites{KL,DY}, and} to our knowledge, there are no known results on the existence of a  solution to the initial value problem \eqref{SDE'}, even in the sense of a local-in-time solution.
  {
  \begin{Remark}
    Hamamuki \cite{Ham} obtained a result on a unique existence and a self-similar type large time behavior of a global-in-time graph-like solution of the exponential mean curvature flow $V=1-\exp(-\kappa)$.
 {Among other results, in \cite{Ham} he proved} that for any graph-like solution $\Gamma_t=\{(x,u(x,t)); x\in\R^N,x_1>0\}$ with a prescribed contact angle condition $u_{x_1}=\beta$ on $\{x_1=0\}$, scaled solutions $u^\sigma(x,t):=\sigma^{-1/2}u(\sigma^{1/2}x,\sigma t)$ converge to some function $U$ the graph of which is a self-similar solution to the mean curvature flow $V=\kappa$, on $\{(x,t)\in\R^N\times(0,\infty); x_1>0\}$ as $\sigma\to\infty$. We point out that their argument is based on viscosity solutions, and does not impose any smallness assumptions unlike \cites{DY,KL}. Unfortunately, since their method relies on the comparison principle, it is not applicable to our equation \eqref{SDE'}, which is of fourth order and hence possesses no comparison property.
  \end{Remark}
  }
  In this paper, {upon a H\"{o}lder space framework (see also Remark~\ref{Holrmk}),} we establish a global-in-time solvability for the problem \eqref{SDE'} with a general curvature nonlinearity $f$ with initial data with a small gradient (see Theorem~\ref{GE}). Furthermore, we obtain a large time behavior of a global-in-time solution to the equation \eqref{SDE'} of asymptotically self-similar type. In particular, we prove that $u^\sigma$ converges to a function $U$ on $\R\times(0,\infty)$ such that the graph of $U(\cdot,t)$ is a solution to the surface diffusion flow $V=-f'(0)\partial_s^2\kappa$ (see Theorem~\ref{ConvSSS}). We also mention a local-in-time solvability for large initial data and an unconditional uniqueness {on H\"{o}lder spaces} (see Theorem~\ref{locexistence}).

	In the proof of the global existence and the large time behavior, we adopt scaled H\"{o}lder norms $\|\cdot\|_{BC^{k+\mu,(k+\mu)/4}(\R\times(a,b))}'$ (see Definition~\ref{defHol}~(3)) and 
properly weighted H\"{o}lder norms $\|\cdot\|_{Z^k_T}$ (see Definition~\ref{defZkt} and Remarks~\ref{rmkZkt},~\ref{rmkZkt2}), instead of usual parabolic H\"{o}lder norms $\|\cdot\|_{BC^{k+\mu,(k+\mu)/4}(\R\times(a,b))}$. This technical improvement enables us to establish linear and nonlinear estimates without breaking the (asymptotic) scale invariance, and makes them simple and unified.
  \begin{Remark}\label{expect}
  We explain the reason why we expect that $u^\sigma$ converges to $U$. Let $\Gamma_t$, $V(\cdot,t)$ and $\kappa(\cdot,t)$ be a graph of the function $u(\cdot,t)$, the normal velocity of $\Gamma_t$, and the curvature of $\Gamma_t$, respectively. Also, define $\Gamma^\sigma_t$, $V^\sigma_t$ and $\kappa^\sigma(\cdot,t)$ in the same manner. Then $\Gamma^\sigma_t=\sigma^{-1/4}\Gamma_{\sigma t}$, and
  \[
  V^\sigma(z,t)=\sigma^{3/4}V(\sigma^{1/4}z,\sigma t),\ \kappa^\sigma(z,t)=\sigma^{1/4}\kappa(\sigma^{1/4}z,\sigma t).
  \]
  In particular, if $\Gamma_t$ solves the equation $V=\partial_s^2 f(-\kappa)$, then
  \[
  V^\sigma(z,t)=\sigma^{1/4}\partial_s^2 f(-\kappa(\sigma^{1/4}z,\sigma t))=\partial_s^2 f_\sigma(-\kappa^\sigma),
  \]
  where $f_\sigma(r):=\sigma^{1/4}f(-\sigma^{-1/4}r)$ (see also \eqref{scaledeq}). Since $f_\sigma(r)\to f'(0)$ as $\sigma\to\infty$, we expect that $\Gamma^\sigma_t$ converges to a solution to $V=-f'(0)\partial_s^2\kappa$.
  \end{Remark}
 \subsection{Main results} \label{SSMain}

Our main results are summarized as follows.
 We shall work in \erase{a} parabolic H{\"o}lder spaces.
 For detailed notation, see Section \ref{SPr}.
 The first result is the unique existence of a global-in-time solution with decay estimates when $\lVert u_{0x}\rVert_\infty$, $\lVert u_{0xx}\rVert_\infty$ are small.
 The second result says that the solution asymptotically becomes a self-similar solution to the surface diffusion flow $V=-f'(0)\partial_s^2\kappa$.
 Let $W^{k,p}$ denote the $L^p$-Sobolev space of order $k$ for $k\in\Z_{\ge 0}$ and $p\in[1,\infty]$.
 As well-known, $W^{1,\infty}=C^{0,1}$ (see e.g., \cite[5.8.2, Theorem 4]{E}). See also Section~2 for notation of function spaces and norms.

\begin{Theorem}\label{GE}
    Let $k\in\Z_{\ge 4}$ {(i.e., $k$ is an integer and $k\ge4$),} $\mu\in(0,1)$ and suppose that
       \begin{gather}
    f\in W^{k,\infty}_{\textrm{loc}}(\R),\ f'>0,\ f'(0)=1,\label{fassump}\\
    u_0\in C^{4+\mu}(\R),\ (u_0)_x\in L^\infty(\R).\label{u0assmup}
  \end{gather}            
 Then there is a constant $\varepsilon_0>0$ such that if $\|(u_0)_x\|_{W^{1,\infty}(\R)}<\varepsilon_0$, then problem \eqref{SDE'} possesses a unique global-in-time solution $u\in u_0+BC^{4+\mu,1+\mu/4}(\R\times(0,\infty))$. Furthermore, we have that $u\in C^{k+1+\mu,(k+1+\mu)/4}_{\textrm{time loc}}(\R\times(0,\infty))$ and that
    \begin{equation}\label{Thmudecay}
    \begin{multlined}
\|u_x\|_{L^\infty(\R\times(0,\infty))}+\sup_{t\in(0,\infty)}\sum_{\substack{\ell,m\in\Z_{\ge 0},\\ 1\le\ell+4m\le k+1}}t^{(\ell+4m-1)/4}(1+t)^{1/4}\|\partial_x^\ell\partial_t^m u\|_{L^\infty(\R\times(0,\infty))}\\
      +\sup_{t\in(0,\infty)}t^{(k+\mu)/4}(1+t)^{1/4}[u]_{C^{k+1+\mu,(k+1+\mu)/4}(\R\times(t/2,t))}\le C\|(u_0)_x\|_{W^{1,\infty}},
    \end{multlined}
    \end{equation}
    with $C$ depending only on $f$, $\varepsilon_0$ and $k$.
  \end{Theorem}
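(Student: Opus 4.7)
The plan is to view \eqref{SDE'} as a quasilinear perturbation of the biharmonic heat equation $u_t=-u_{xxxx}$. Since $f'(0)=1$, a Taylor expansion of the right-hand side of \eqref{SDE'} around $u_x=0,\ u_{xx}=0,\ -\kappa=0$ rewrites the equation as
\[
u_t+u_{xxxx}=R[u],
\]
where $R[u]$ is a sum of terms, each a product of at least two of the derivatives $u_x,u_{xx},u_{xxx},u_{xxxx}$ against smooth bounded coefficients depending on $u_x$ and on $f^{(j)}$ (the latter controlled by the hypothesis $f\in W^{k,\infty}_{\textrm{loc}}$). The crucial structural feature is $R[0]=0$ together with the quadratic vanishing of $R$ in the small quantities; this is what will power the smallness-based contraction.

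I would then solve \eqref{SDE'} by a Banach fixed-point argument in the weighted H\"older space $Z^4_T$ of Definition~\ref{defZkt}, on slabs $\R\times(0,T)$ but with all constants independent of $T$. The linear half of the argument is an estimate of the form
\[
\|v\|_{Z^4_T}\le C\bigl(\|(u_0)_x\|_{W^{1,\infty}(\R)}+N^{\ast}(g)\bigr)\quad\text{whenever}\quad v_t+v_{xxxx}=g,\ v(\cdot,0)=u_0,
\]
for a suitable source-term companion norm $N^{\ast}$ to $Z^4_T$; this is obtained from $L^1$ bounds on the biharmonic kernel $K_t=t^{-1/4}K_1(\cdot/t^{1/4})$ applied to $(u_0)_x$ via $(e^{-t\partial_x^4}u_0)_x=K_t\ast (u_0)_x$, combined with the scaled Schauder estimate $\|\cdot\|_{BC^{k+\mu,(k+\mu)/4}}'$. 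The weights $t^{(\ell+4m-1)/4}(1+t)^{1/4}$ appearing in \eqref{Thmudecay} are matched precisely by these kernel bounds, and the scale invariance $(x,t)\mapsto(\sigma^{1/4}x,\sigma t)$ (cf.~Remarks~\ref{rmkZkt},~\ref{rmkZkt2}) renders $C$ independent of $T$. For the nonlinear half, the quadratic vanishing of $R$ together with power-counting of the $Z^4_T$ weights gives
\[
N^{\ast}(R[u])\le C_{\ast}(r)\,\|u\|_{Z^4_T}^{\,2},\qquad N^{\ast}\bigl(R[u]-R[\tilde u]\bigr)\le C_{\ast}(r)\,r\,\|u-\tilde u\|_{Z^4_T}
\]
on balls $\{\|u\|_{Z^4_T}\le r\}$ with $C_{\ast}$ locally bounded. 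Combining these, once $\|(u_0)_x\|_{W^{1,\infty}}<\varepsilon_0$ is small enough the map sending $u$ to the solution $v$ of $v_t+v_{xxxx}=R[u]$, $v(\cdot,0)=u_0$ is a contraction on a ball of radius proportional to $\|(u_0)_x\|_{W^{1,\infty}}$ in $Z^4_T$; since $\varepsilon_0$ is $T$-independent, its fixed point is the claimed global-in-time solution, satisfying \eqref{Thmudecay} in the case $k=4$.

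Higher regularity $u\in C^{k+1+\mu,(k+1+\mu)/4}_{\textrm{time loc}}$ and the remaining bounds for general $k\ge 4$ follow by a standard bootstrap: on any time slab $(t/2,t)$ the quasilinear coefficients already sit in a sufficient H\"older class, so iterated interior parabolic Schauder estimates in the scaled norms $\|\cdot\|'$ upgrade the regularity one order at a time, the weights $t^{(\ell+4m-1)/4}(1+t)^{1/4}$ propagating through the scale invariance. Uniqueness in $u_0+BC^{4+\mu,1+\mu/4}(\R\times(0,\infty))$ is obtained from the contraction on a short initial interval combined with time-translation. The main obstacle, and the technical heart of the argument, will be the construction of the weighted space $Z^k_T$ and its matching linear theory so that (i) the scale invariance of the leading biharmonic heat operator is reflected in $T$-independent constants, and (ii) the Taylor remainders produced by the genuinely non-scale-invariant $f$ can nonetheless be absorbed by the quadratic structure of $R[u]$ together with the smallness of $\|(u_0)_x\|_{W^{1,\infty}}$.
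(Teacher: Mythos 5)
Your overall toolbox (perturbation of the biharmonic heat flow, scaled/weighted H\"older norms, Schauder input for the fourth-order term, smallness-driven iteration, bootstrap for higher $k$) is the same as the paper's, but as written the plan has genuine gaps. First, you run the fixed point on $u$ itself in $Z^4_T$ and claim a contraction on a ball of radius proportional to $\|(u_0)_x\|_{W^{1,\infty}}$. The hypotheses only give $(u_0)_x\in L^\infty$, so $u_0$ (hence $u$) may grow linearly, while the norm of Definition~\ref{defZkt} is built from the scaled norms $\|\cdot\|'_{BC^{k+\mu,(k+\mu)/4}(\R\times(t/2,t))}$, which control the function itself in $L^\infty$; for data like $u_0(x)=ax$ your ball is empty and the scheme breaks down. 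Moreover the weights in \eqref{Thmudecay} (including the extra $(1+t)^{1/4}$ gain) are statements about $u_x$ and its derivatives, not about $u$. The paper resolves both points by differentiating the equation: it works with $v=u_x$, whose equation has the divergence form $v_t=((1-\alpha)v_{xx}+F)_{xx}$, so that the Duhamel formula carries $\partial_x^2$ outside the semigroup and the $Z^k_T$ norm of $v$ is the natural object; you would need the same reduction.

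Second, the heart of the matter is exactly the estimate you postulate for the unspecified companion norm $N^*$: the quasilinear fourth-order term (coefficient $\alpha(u_x,u_{xx})$ times the top derivative) cannot be absorbed by biharmonic kernel bounds because of the non-integrable $(t-s)^{-1}$ factor (this is the point of Remark~\ref{Holrmk} in the introduction), and its "quadratic vanishing" does not help with this. What is needed, and what your plan asserts rather than provides, is a $T$-uniform weighted maximal-regularity/Schauder estimate compatible with the weights $t^{-1/4}(1+t)^{-1/4}$; in the paper this is the content of Lemmas~\ref{123'}--\ref{123} (splitting the Duhamel integral and applying Proposition~\ref{Schauder'} to the near-diagonal piece) and Theorem~\ref{linearestimate} (inverting $\operatorname{id}-L$ for the perturbation $\alpha v_{xx}$). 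Finally, a fixed point in the weighted space alone does not give $u\in u_0+BC^{4+\mu,1+\mu/4}(\R\times(0,\infty))$ with regularity up to $t=0$, nor the unconditional uniqueness in that class: both require the separate unscaled local-in-time theory with coefficients frozen at $u_0$ (Theorem~\ref{locexistence}), which also furnishes the continuation criterion the paper combines with the a priori bound (Lemmas~\ref{vdecayest}--\ref{utrap}) to globalize; your one-sentence appeal to ``contraction on a short initial interval'' is precisely this missing component.
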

\begin{Theorem}\label{ConvSSS}
  Assume the same conditions as in Theorem~\ref{GE}. There is a constant $\varepsilon_*>0$ such that if $\|(u_0)_x\|_{W^{1,\infty}(\R)}<\varepsilon_*$ and
  \begin{equation}\label{IVasympt}
  u_0(x)=(a+o(1))x\textrm{ as }x\to+\infty,\ (b+o(1))x\textrm{ as }x\to-\infty,
  \end{equation}
  for some $a,b\in(-\varepsilon_*,\varepsilon_*)$, then $u^\sigma(x,t):=\sigma^{-1}u(\sigma x,\sigma^4t)$ satisfies
  \begin{equation}\label{asymptSSS}
    u^\sigma\to U\ \textrm{uniformly on every compact set in $\R\times[0,\infty)$}\ \textrm{as}\ \sigma\to\infty,
  \end{equation}
 where $U\in C^\infty(\R\times(0,\infty))\cap C(\R\times[0,\infty))$ is the unique self-similar solution to the problem
  \begin{equation}\label{SSS}
    U_t=\left(\frac{1}{(1+U_x^2)^{1/2}}\left(-\frac{U_{xx}}{(1+U_x^2)^{3/2}}\right)_x\right)_x\ \textrm{in}\ \R\times(0,\infty),\ U(x,0)=\begin{cases}
      ax& \textrm{ if $x\ge 0$,}\\
      bx& \textrm{ if $x<0$.}
    \end{cases}
  \end{equation}
  Furthermore, $\partial_x^{\ell}\partial_t^m u^\sigma$ converges to $\partial_x^{\ell}\partial_t^m U$ uniformly on every compact set in $\R\times(0,\infty)$ as $\sigma\to\infty$ for all $\ell,m\in\Z_{\ge 0}$ with $\ell+4m\le k+1$.
\end{Theorem}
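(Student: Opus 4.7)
The plan is to combine the scaling symmetry $u \mapsto u^\sigma$ with the a priori estimates of Theorem \ref{GE}, extract a subsequential limit by compactness, and identify it with $U$ using uniqueness of self-similar solutions. A direct calculation shows that if $u$ solves \eqref{SDE'} with nonlinearity $f$, then $u^\sigma(x,t) = \sigma^{-1} u(\sigma x, \sigma^4 t)$ solves \eqref{SDE'} with $f$ replaced by
\[
f_\sigma(r) := \sigma f(\sigma^{-1} r),
\]
and with initial datum $u^\sigma(\cdot,0) = \sigma^{-1} u_0(\sigma\,\cdot\,)$; this realizes the scaling anticipated in Remark~\ref{expect}. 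Since $f'(0)=1$, one has $f_\sigma'(r) = f'(\sigma^{-1}r)$ and $f_\sigma^{(j)}(r) = \sigma^{1-j} f^{(j)}(\sigma^{-1}r)$ for $j\ge 2$, so $f_\sigma$ converges in $C^j_\loc(\R)$ to the affine map $r\mapsto f(0)+r$ for every $j\ge 1$, and the formal limit equation is precisely \eqref{SSS}.

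Next I would transfer the weighted bounds of Theorem \ref{GE} from $u$ to $u^\sigma$. The weight $t^{(\ell+4m-1)/4}$ appearing in \eqref{Thmudecay} is chosen so that the substitution $(y,s)=(\sigma x,\sigma^4 t)$, under which $\partial_x^\ell\partial_t^m u^\sigma(x,t)=\sigma^{\ell+4m-1}(\partial_x^\ell\partial_t^m u)(y,s)$, converts the bound on $u$ at scaled time $s=\sigma^4 t$ into the same bound on $u^\sigma$ at time $t$. This yields, for each compact $K\subset\R\times(0,\infty)$ and each $\ell+4m\le k+1$, the uniform estimates
\[
\sup_{\sigma\ge 1}\|\partial_x^\ell\partial_t^m u^\sigma\|_{L^\infty(K)}+\sup_{\sigma\ge 1}[u^\sigma]_{C^{k+1+\mu,(k+1+\mu)/4}(K)}\le C_K,
\]
together with $\sup_{\sigma\ge 1}\|u^\sigma_x\|_{L^\infty(\R\times(0,\infty))}\le C\varepsilon_*$. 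The hypothesis \eqref{IVasympt} simultaneously produces the locally uniform convergence $\sigma^{-1}u_0(\sigma\,\cdot\,)\to U(\cdot,0)$ on $\R$ as $\sigma\to\infty$.

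Given these ingredients, an Arzel\`a--Ascoli diagonal argument on an exhaustion of $\R\times(0,\infty)$ by compact sets extracts, from any sequence $\sigma_j\to\infty$, a subsequence along which $\partial_x^\ell\partial_t^m u^{\sigma_j}\to\partial_x^\ell\partial_t^m\tilde U$ uniformly on every compact subset of $\R\times(0,\infty)$ for all $\ell+4m\le k+1$, with $\tilde U\in C^{k+1+\mu,(k+1+\mu)/4}$ locally. Continuity of $\tilde U$ up to $t=0$ on compact subsets of $\R$ is inherited from the interior parabolic H\"older seminorm bound combined with the initial-data convergence. Passing to the limit termwise in the rescaled PDE, using $f_{\sigma_j}'\to 1$ and $f_{\sigma_j}^{(j)}\to 0$ for $j\ge 2$ locally uniformly, shows $\tilde U$ solves \eqref{SSS} classically with the prescribed piecewise-linear initial datum. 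The uniqueness of the self-similar solution of \eqref{SSS} with this datum, extractable from the small-data well-posedness of the linearized flow in scale-invariant spaces as in \cite{KL,DY}, then gives $\tilde U=U$; since the limit is independent of the subsequence, the full family $u^\sigma$ converges to $U$.

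The hardest step will be the uniform transfer of the estimates of Theorem \ref{GE} to $u^\sigma$ with constants independent of $\sigma$, and in particular the extraction of a locally uniform modulus of continuity up to $t=0$ so as to correctly identify the initial trace of $\tilde U$. A secondary technical point is the quadratic error term in the expansion $\partial_s^2 f_\sigma(-\kappa^\sigma)=-f_\sigma'(-\kappa^\sigma)\partial_s^2\kappa^\sigma+f_\sigma''(-\kappa^\sigma)(\partial_s\kappa^\sigma)^2$: although $f_\sigma''(r)=O(\sigma^{-1})$ pointwise, its vanishing in the limit must be verified locally uniformly in $(x,t)$, which is guaranteed by the $L^\infty$ bounds on $\kappa^\sigma$ and $\partial_s\kappa^\sigma$ from the second step.
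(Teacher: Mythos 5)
Your proposal is correct and follows essentially the same route as the paper's proof: transfer the decay estimates \eqref{Thmudecay} to $u^\sigma$ by scale invariance, apply Ascoli--Arzel\`a (the $t^{-3/4}$ bound on $u^\sigma_t$ giving the uniform $t^{1/4}$ modulus up to $t=0$), pass to the limit in the rescaled equation \eqref{scaledeq} using $f_\sigma'\to1$, and identify the limit through the uniqueness of small self-similar solutions to \eqref{SSS} from \cite{KL}. The only points to make explicit are that the subsequential limit lies in the Koch--Lamm uniqueness class, which the paper verifies by bounding $\|U\|_{X_\infty}$ via $\sup_{t>0}\left(\|U_x\|_{L^\infty}+t^{-1/4}\|U_{xx}\|_{L^\infty}\right)$ using \eqref{Uxest} (your transferred bounds contain exactly this information), and that $f_\sigma$ itself diverges by the constant $\sigma f(0)$ when $f(0)\neq0$ --- harmlessly, since only $f_\sigma'$ enters the equation.
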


The key idea for the proofs of Theorems~\ref{GE}~and~\ref{ConvSSS} is to obtain the decay estimate \eqref{Thmudecay}. Once this estimate is obtained, we can extend a local-in-time solution (see Theorem~\ref{locexistence} for the existence) to a global-in-time solution. Furthermore, the use of the {Ascoli-Arzel\`a} theorem enables us to justify a formal limit argument in Remark~\ref{expect} (see also \eqref{scaledeq}).

In order to obtain the estimate \eqref{Thmudecay}, we differentiate the both sides of the equation \eqref{SDE'} and re-write it in the form
\begin{equation}\label{introlineq}
  v_t=((1-\alpha)v_{xx}+F)_{xx}\quad \textrm{in}\quad \R\times(0,T),\quad v(\cdot,0)=v_0\quad \textrm{in}\quad \R,
\end{equation}
where $v=u_x$ and $\alpha=\alpha(v,v_x)$, $F=F(v,v_x)$ (see \eqref{alpha} and \eqref{F}). Here, the functions $\alpha$ and $F$ are treated as perturbations from the biharmonic heat equation and expected to be small. To deal with the fourth-order perturbation $\alpha$, we prepare a suitable weighted H\"{o}lder norm (denoted by $\|v\|_{Z^k_T}$) and establish a global-in-time linear estimate by a perturbation argument using semigroup estimates for the biharmonic {heat} semigroup $e^{-t\partial_x^4}$ and the parabolic Schauder estimates (see Section~4).
 We also apply direct estimates to confirm the smallness of $\alpha$ and control the lower-order perturbation $F$ (see Section~5).
\begin{Remark}\label{Holrmk}
  We explain the reason why we adopted a H\"{o}lder space framework. When we try to obtain a nonlinear estimate for a fourth-order \textit{quasilinear} parabolic equation \eqref{introlineq} via a perturbation argument from the biharmonic heat equation, (unlike a semilinear equation), we have to estimate fourth-order {spatial} derivatives of a solution to absorb the fourth-order perturbation term, namely
  \[
  \partial_x^4\int_0^t e^{-(t-s)\partial_x^4}[\alpha v_{xxxx}](s)\,ds.
  \]
  If we try to do this using only semigroup estimates, the coefficient $(t-s)^{-1}$ comes from the fourth-order spatial derivatives, and the estimate results in a failure. In order to avoid this difficulty, we have to use a space-time norm $\|f\|^*$ which satisfies the estimate
  \[
  \left\|\partial_x^4\int_0^t e^{-(t-s)\partial_x^4}f(s)\,ds\right\|^*\le C\|f\|^*.
  \]
  It is well-known that this estimate holds for parabolic H\"{o}lder norms and Sobolev norms, whose corresponding estimates are called Schauder estimates and Calder\'{o}n--Zygmund estimates, respectively. While Koch--Lamm and Du--Yip \cites{DY,KL} used a Sobolev space framework (see also \eqref{KLnorm}) to obtain a perturbation estimate, we adopt a H\"{o}lder space framework, which enables us to control the derivatives of a solution in pointwise sense to deal with a general curvature nonlinearity.
\end{Remark}

\begin{Remark}\label{multidim}
  Our method is applicable to graph-like solutions to a multi-dimentional exponential surface diffusion flow, namely
  \begin{equation}\label{mSDE}
  \begin{gathered}
  u_t=\div\left(\sqrt{1+|\nabla u|^2}P[\nabla u]\nabla f\left(-\frac{P[\nabla u]}{\sqrt{1+|\nabla u|^2}}\nabla^2 u\right)\right)\quad\textrm{in}\quad \R^{n}\times(0,\infty),\\
  u(\cdot,0)=u_0\quad\textrm{in}\quad\R^{n},
  \end{gathered}
  \end{equation}
  where
  \[
  P[\nabla u]:=I-\frac{\nabla u\otimes\nabla u}{1+|\nabla u|^2},
  \]
  and we can obtain results on the global-in-time solvability of Cauchy problems for the equation \eqref{mSDE} and the asymptotic behavior of self-similar type for solutions to the equation \eqref{mSDE}, which are similar to Theorems~\ref{GE}~and~\ref{ConvSSS}. In this paper, we restrict our arguments to the case $N=1$ for simplicity.
\end{Remark}
\begin{Remark} \label{RDuYip}
For the conventional surface diffusion flow, Du and Yip \cite{DY} proved under suitable smallness and spatial decay assumptions on initial data that the rescaled function $u^\sigma$ converges to $U$ uniformly in $\mathbb{R}\times[\eta,1/\eta]$ for any $\eta>0$ not only in $K\times[\eta,1/\eta]$ where $K$ is a compact set in $\mathbb{R}$. 
 Here, $U$ denotes the self-similar solution in Theorem \ref{ConvSSS}.
 They discussed the multi-dimensional case.
 Our convergence result to self-similar solutions does not include this kind of result of Du and Yip \cite{DY} when $f(r)=cr$, $c>0$ i.e., the equation is the conventional surface diffusion flow, since the uniform convergence is on $K$ not in $\mathbb{R}$.
 To prove the uniformity in $\mathbb{R}$, it suffices to prove equi-decay properties for $u^\sigma$ as indicated in \cite{DY} which has been already noted in \cite{GGS}.
 We believe such analysis should work in our setting but we won't carry it out in this paper.
\end{Remark}
The rest of this paper is organized as follows. In Section~2, we define parabolic H\"{o}lder norms and summarize elementary and parabolic estimates for these norms. In Section~3, we prove an existence and uniqueness of a local-in-time solution in a parabolic H\"{o}lder space with the initial value in a (spatial) H\"{o}lder space. In Section~4, which is a key part of the proof of main theorems, we define weighted H\"{o}lder norms $\|v\|_{Z^k_T}$ and obtain global-in-time linear estimates for a perturbed biharmonic heat equation \eqref{introlineq} with this norm. In Section~5,  we obtain elementary estimates of $\alpha(v,v_x)$ and $F(v,v_x)$ by the norm $\|v\|_{Z^k_T}$ to apply the linear estimates in Section~4 to obtain nonlinear decay estimates \eqref{Thmudecay}. This enables us to extend a local-in-time solution constructed in Section~3 to a global-in-time solution, and complete the proof of Theorem~\ref{GE}. In Section~6, we justify the limit argument in Remark~\ref{expect} and complete the proof of Theorem~\ref{ConvSSS}.
  \section{Preliminaries} \label{SPr} 
  In this section, as preliminaries, we give definitions of parabolic H\"{o}lder norms, and state some elementary and parabolic H\"{o}lder estimates which plays essential roles throughout the proofs.
  \subsection{Parabolic H\"{o}lder spaces}
  Our arguments are performed on a H\"{o}lder space framework. In this subsection, we {recall} the definitions and properties of parabolic H\"{o}lder seminorms, norms, and spaces {mainly to fix notation}.
    \begin{Definition}
    Let $f\in C(\R)$.
    \begin{enumerate}
      \item For $\lambda\in[0,\infty)$, we define a H\"{o}lder seminorm $[f]_{C^\lambda(\R)}$ by
      \[
      [f]_{C^\lambda(\R)}:=\sup_{\substack{x,y\in\R,\\x\neq y}}\frac{|f^{(\lfloor\lambda\rfloor)}(x)-f^{(\lfloor\lambda\rfloor)}(y)|}{|x-y|^\lambda}
      \]
      if $\lambda\notin\Z$, and
      \[
[f]_{C^\lambda(\R)}:=\|f^{(\lambda)}\|_{L^\infty(\R)}
      \]
      if $\lambda\in\Z$. We also define a H\"{o}lder space $BC^\lambda(\R)$ as the space which consists of function $f\in C(\R)$ such that $\|f\|_{BC^\lambda(\R)}<\infty$.
      \item For $\lambda\ge 0$, we define a H\"{o}lder norm $\|f\|_{BC^\lambda(\R)}$ by
      \[
      \|f\|_{BC^\lambda(\R)}:=\sum_{\ell=0}^{\lfloor\lambda\rfloor}\|f^{(\ell)}\|_{L^\infty(\R)}+[f]_{C^{\lambda}(\R)}.
      \]
      We also define a H\"{o}lder space $BC^\lambda(\R)$ as the space which consists of function $f\in C(\R)$ such that $\|f\|_{BC^\lambda(\R)}<\infty$.
    \end{enumerate}
    \end{Definition}
    \begin{Definition}\label{defHol}
      Let $-\infty\le a<b\le\infty$ and $f\in C(\R\times(a,b))$.
      \begin{enumerate}
        \item For $\lambda,\mu\in(0,1)$, we define H\"{o}lder seminorms $[f]_{C^\lambda_x(\R\times(a,b))}$ and $[f]_{C^\mu_t(\R\times(a,b))}$ by
      \[
[f]_{C^\lambda_x(\R\times(a,b))}:=\sup_{\substack{x,y\in\R,\\t\in(a,b),\\ x\neq y}}\frac{|f(x,t)-f(y,t)|}{|x-y|^\lambda},\
        [f]_{C^\mu_t(\R\times(a,b))}:=\sup_{\substack{x\in\R,\\s,t\in(a,b),\\ s\neq t}}\frac{|f(x,s)-f(x,t)|}{|s-t|^\mu}.
      \]
      We also define
      \[
      [f]_{C^0_{x}(\R\times(a,b))}=[f]_{C^0_t(\R\times(a,b))}:=\|f\|_{L^\infty(\R\times(a,b))}.  
      \]
      \item For $\lambda\ge 0$ and $n\in\Z_{\ge 1}$, we define a H\"{o}lder seminorm $[f]_{C^{\lambda,\lambda/n}(\R\times(a,b))}$ by
      \[
      [f]_{C^{\lambda,\lambda/n}(\R\times(a,b))}:=\sum_{\substack{\ell,m\in\Z_{\ge 0},\\ \ell+nm=\lfloor\lambda\rfloor}}[\partial^\ell_x\partial^m_t f]_{C^{\lambda-\lfloor\lambda\rfloor}_x(\R\times(a,b))}+\sum_{\substack{\ell,m\in\Z_{\ge 0},\\ \lambda-n<\ell+nm\le \lambda}}[\partial_x^\ell\partial_t^mf]_{C^{(\lambda-\ell-nm)/n}_t(\R\times(a,b))}.
      \]
        We also define a H\"{o}lder space $C^{\lambda,\lambda/n}(\R\times(a,b))$ as the space which consists of functions $f\in C(\R\times(a,b))$ such that $[f]_{C^{\lambda,\lambda/n}(\R\times(a,b))}<\infty$.
      \item For $\lambda\ge 0$ and $n\in\Z_{\ge 1}$, we define a H\"{o}lder norm $\|f\|_{BC^{\lambda,\lambda/n}(\R\times(a,b))}$ by
      \[
    \|f\|_{BC^{\lambda,\lambda/n}(\R\times(a,b))}:=\sum_{\substack{\ell,m\in\Z_{\ge 0},\\ \ell+nm\le\lambda}}\|\partial^\ell_x\partial^m_t f\|_{L^\infty(\R\times(a,b))}+[f]_{C^{\lambda,\lambda/n}(\R\times(a,b))}.
      \]
      In the case $(a,b)$ is bounded, we define a scaled H\"{o}lder norm $\|f\|_{BC^{\lambda,\lambda/n}(\R\times(a,b))}'$ by
      \[
        \|f\|_{BC^{\lambda,\lambda/n}(\R\times(a,b))}':=\sum_{\substack{\ell,m\in\Z_{\ge 0},\\ \ell+nm\le\lambda}}(b-a)^{\ell/n+m}\|\partial^\ell_x\partial^m_t f\|_{L^\infty(\R\times(a,b))}+(b-a)^{\lambda/n}[f]_{C^{\lambda,\lambda/n}(\R\times(a,b))}.
      \]
      We also define a H\"{o}lder space $BC^{\lambda,\lambda/n}(\R\times(a,b))$ as the space which consists of function $f\in C(\R\times(a,b))$ such that $\|f\|_{BC^{\lambda,\lambda/n}(\R\times(a,b))}<\infty$. 
      \item For $\lambda>0$ and $n\in\Z_{\ge 1}$, we define a time-local H\"{o}lder space $C^{\lambda,\lambda/n}_{\textrm{time loc}}(\R\times(a,b))$ as the space which consists of functions $f\in C(\R\times(a,b))$ such that $f\in C^{\lambda,\lambda/n}(\R\times(c,d))$ for all $a<c<d<b$.
      \end{enumerate}
    \end{Definition}
  \begin{Remark}\label{Holrmk}
    \begin{enumerate}
    We state some frequently used properties of H\"{o}lder norms, proofs of which are elementary and omitted.
      \item For $f\in BC^{\lambda,\lambda/n}(\R\times(a,b))$ and $\ell,m\in\Z_{\ge 0}$ with $\ell+nm\le\lambda$,
      \begin{gather*}
      \|\partial_x^\ell\partial_t^mf\|_{BC^{\lambda'-\ell-nm,(\lambda'-\ell-nm)/n}(\R\times(a,b))}\le\|f\|_{BC^{\lambda,\lambda/n}(\R\times(a,b))},\\
      \|\partial_x^\ell\partial_t^mf\|_{BC^{\lambda'-\ell-nm,(\lambda'-\ell-nm)/n}(\R\times(a,b))}'\le t^{-(\ell/n+m)}\|f\|_{BC^{\lambda,\lambda/n}(\R\times(a,b))}'.
      \end{gather*}
      \item If $-\infty<a<b<\infty$, for $f\in BC^{\lambda,\lambda/n}(\R\times(a,b))$ and $\lambda'\le\lambda$,
      \[
      \|f\|_{BC^{\lambda',\lambda'/n}(\R\times(a,b))}\le\|f\|_{BC^{\lambda,\lambda/n}(\R\times(a,b))},\ \|f\|_{BC^{\lambda',\lambda'/n}(\R\times(a,b))}'\le\|f\|_{BC^{\lambda,\lambda/n}(\R\times(a,b))}'.
      \]
      \item Scaled H\"{o}lder norm $\|f\|_{BC^{\lambda,\lambda/n}(\R\times(a,b))}'$ possesses a scale invariance property, i.e., if we define $f^\sigma(x,t):=f(\sigma x,\sigma^nt)$ for $\sigma>0$, then
      \[
      \|f^\sigma\|_{BC^{\lambda,\lambda/n}(\R\times(\sigma^{-n}a,\sigma^{-n}b))}'=\|f\|_{BC^{\lambda,\lambda/n}(\R\times(a,b))}'.
      \]
    \end{enumerate}
  \end{Remark}
    We note here that the scaled H\"{o}lder norms $\|f\|_{BC^{\lambda,\lambda/n}(\R\times(a,b))}'$ and their properties stated below will play an essential role in linear and nonlinear \textit{decay estimates for derivatives} for solutions to parabolic equations, which will be discussed in Sections~4~and~5. 
  \begin{Proposition}\label{CompProd}
     Let $-\infty\le a<b\le\infty$, $\lambda>0$, $R>0$, and $g_1,\ldots,g_k,h_1,\ldots,h_k\in BC^{\lambda,\lambda/n}(\R\times(a,b))$ be such that $\|g_i\|_{BC^{\lambda,\lambda/n}(\R\times(a,b))},\|h_i\|_{BC^{\lambda,\lambda/n}(\R\times(a,b))}\le R$ for $i\in\{1,\ldots k\}$.
    \begin{enumerate}[label=(\roman*)]
      \item 
      \begin{gather}
      \|g_1\ldots g_k\|_{BC^{\lambda,\lambda/n}(\R\times(a,b))}\le C_{\lambda,n,k}\prod_{i=1}^k\|g_i\|_{BC^{\lambda,\lambda/n}(\R\times(a,b))},\label{prod}\\
      \begin{aligned}
      \|g_1\ldots g_k-h_1\ldots &h_k\|_{BC^{\lambda,\lambda/n}(\R\times(a,b))}\\
      &\le C_{\lambda,n,k}R^{k-1}\max_{i=1,\ldots,k}\|g_i-h_i\|_{BC^{\lambda,\lambda/n}(\R\times(a,b))}.\end{aligned}\label{dprod}
      \end{gather}
      \item If $A\in W^{\lceil\lambda\rceil,\infty}(\R^k)$, then
      \begin{equation}\label{comp}
      \begin{aligned}
    \|A(g_1,\ldots,&g_k)\|_{BC^{\lambda,\lambda/n}(\R\times(a,b))}\\
    &\le |A(0,\ldots,0)|+C_{\lambda,n,k,R,A}\max_{i\in\{1,\ldots,k\}}\|g_i\|_{BC^{\lambda,\lambda/n}(\R\times(a,b))}.
      \end{aligned}
    \end{equation}
    Furthermore, if $A\in W^{\lceil\lambda\rceil+1,\infty}(\R^k)$, then
       \begin{equation}\label{dcomp}
       \begin{aligned}
      \|A(g_1,\ldots,g_k)-&A(h_1,\ldots,h_k)\|_{BC^{\lambda,\lambda/n}(\R\times(a,b))}\\
      &\le C_{\lambda,n,k,R,A}\max_{i=1,\ldots,k}\|g_i-h_i\|_{BC^{\lambda,\lambda/n}(\R\times(a,b))}.
      \end{aligned}
    \end{equation}
    \end{enumerate}
If $-\infty<a<b<\infty$, the same assertions hold for scaled norms $\|\cdot\|_{BC^{\lambda,\lambda/n}(\R\times(a,b))}'$ with $C$ {independent of} $b-a$. 
  \end{Proposition}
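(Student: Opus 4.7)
The plan is to prove (i) by induction on $k$, reducing to the case $k=2$. For integer-order derivatives the Leibniz rule expresses $\partial_x^\ell\partial_t^m(g_1g_2)$ as a finite sum of products $(\partial_x^{\ell_1}\partial_t^{m_1}g_1)(\partial_x^{\ell_2}\partial_t^{m_2}g_2)$ with $\ell_1+\ell_2=\ell$, $m_1+m_2=m$, each of whose $L^\infty$ norm is controlled by $\|g_1\|_{BC^{\lambda,\lambda/n}}\|g_2\|_{BC^{\lambda,\lambda/n}}$ since $\ell_i+nm_i\le\ell+nm\le\lambda$. For the H\"older seminorm part I apply, after differentiation, the identity $(fh)(x,t)-(fh)(y,s)=(f(x,t)-f(y,s))h(x,t)+f(y,s)(h(x,t)-h(y,s))$, so that in each product one factor contributes a H\"older seminorm and the other an $L^\infty$ norm; the lower-order derivatives that appear are absorbed into the full norm $\|\cdot\|_{BC^{\lambda,\lambda/n}}$. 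The difference estimate \eqref{dprod} then follows by telescoping
\[
g_1\cdots g_k-h_1\cdots h_k=\sum_{i=1}^k g_1\cdots g_{i-1}(g_i-h_i)h_{i+1}\cdots h_k,
\]
applying \eqref{prod} to each summand, and using $\|g_i\|_{BC^{\lambda,\lambda/n}},\|h_i\|_{BC^{\lambda,\lambda/n}}\le R$.

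For (ii), write $\lambda=N+\nu$ with $N=\lfloor\lambda\rfloor$ and $\nu\in[0,1)$ and apply an anisotropic Fa\`a di Bruno formula: each $\partial_x^\ell\partial_t^m[A(g_1,\ldots,g_k)]$ with $\ell+nm\le N$ is a finite sum of terms of the form $(\partial^\beta A)(g_1,\ldots,g_k)\prod_j\partial_x^{\ell_j}\partial_t^{m_j}g_{i_j}$ with $|\beta|\le N$ and $\sum_j(\ell_j+nm_j)=\ell+nm$. Their $L^\infty$ norms are bounded by $\|A\|_{W^{N,\infty}}$ on the ball of radius $R$ together with part (i). For the $\nu$-in-$x$ and $\nu/n$-in-$t$ H\"older seminorms one additionally needs the local Lipschitz continuity of $\partial^\beta A$ on the compact range of $(g_1,\ldots,g_k)$, which is ensured by $A\in W^{\lceil\lambda\rceil,\infty}$ when $\nu>0$ (and is unnecessary when $\nu=0$). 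The difference estimate \eqref{dcomp} is obtained by writing
\[
A(g_1,\ldots,g_k)-A(h_1,\ldots,h_k)=\int_0^1\nabla A(\tau g+(1-\tau)h)\cdot(g-h)\,d\tau
\]
and applying \eqref{comp} to the composition $\nabla A(\tau g+(1-\tau)h)$ together with \eqref{prod}; this integral representation shifts one derivative onto $A$, accounting for the stronger hypothesis $A\in W^{\lceil\lambda\rceil+1,\infty}$.

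The scaled-norm version is essentially automatic: in both the Leibniz and the Fa\`a di Bruno expansions, the total scaling weight of any summand, namely $\prod_j(b-a)^{\ell_j/n+m_j}=(b-a)^{\ell/n+m}$, matches exactly the prefactor that $\|\cdot\|'_{BC^{\lambda,\lambda/n}}$ assigns to $\partial_x^\ell\partial_t^m(\cdots)$, and similarly the seminorm weight $(b-a)^{\lambda/n}$ matches; hence the constants are independent of $b-a$. I expect the main technical obstacle to be the bookkeeping in part (ii) when $\lambda\notin\Z$: in each summand of the Fa\`a di Bruno expansion one must identify which single factor carries the anisotropic $(\nu,\nu/n)$-H\"older seminorm while all remaining factors are absorbed into $\|g_i\|_{BC^{\lambda,\lambda/n}}$, and one must check that no extra powers of $R$ beyond what the statement allows are picked up, so that \eqref{dcomp} remains linear in $\max_i\|g_i-h_i\|_{BC^{\lambda,\lambda/n}}$.
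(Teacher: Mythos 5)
Your proposal is correct and follows essentially the same route as the paper: Leibniz expansion with the H\"older product splitting and the telescoping identity for (i), the iterated chain rule (Fa\`a di Bruno) together with Lipschitz continuity of the relevant derivatives of $A$ for \eqref{comp}, the integral mean-value representation for \eqref{dcomp}, and the scaled-norm case by matching the weights, which is equivalent to the paper's appeal to scale invariance of $\|\cdot\|_{BC^{\lambda,\lambda/n}}'$. The only cosmetic point is that for the zeroth-order $L^\infty$ term in \eqref{comp} you should write $A(g_1,\ldots,g_k)=A(0,\ldots,0)+\bigl(A(g_1,\ldots,g_k)-A(0,\ldots,0)\bigr)$ and use the Lipschitz bound on the second piece (as the paper does via its estimates for $B(g_1,\ldots,g_k)-B(0,\ldots,0)$), rather than bounding it by $\|A\|_{L^\infty}$, so as to obtain exactly the stated form $|A(0,\ldots,0)|+C\max_i\|g_i\|_{BC^{\lambda,\lambda/n}}$.
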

  \begin{proof}
    The proof of assertion \eqref{prod} is similar to \cite{Goess}*{Lemma 2.16}. Note that it is assumed there that the space domain is bounded and that $k=2$; however, the proof is still applicable to get \eqref{prod}.

    Assertion \eqref{dprod} follows from \eqref{prod} and the fact that
    \[
    g_1\ldots g_k-h_1\ldots h_k=\sum_{j=0}^k(g_j-h_j)\prod_{i=0}^{j-1}g_i\prod_{i=j+1}^k h_i.
    \]
    Assertion \eqref{comp} follows from \eqref{prod}, the chain rule, and the obvious facts that
    \begin{gather*}
      [B(g_1,\ldots,g_k)-B(0,\ldots,0)]_{C^{\lambda}_x}\le C\left(B,\max_{i\in\{1,\ldots,k\}}\|g_i\|_{L^\infty}\right)\max_{i\in\{1,\ldots, k\}}[g_i]_{C^\lambda_x},\\
      [B(g_1,\ldots,g_k)-B(0,\ldots,0)]_{C^{\mu}_t}\le C\left(B,\max_{i\in\{1,\ldots,k\}}\|g_i\|_{L^\infty}\right)\max_{i\in\{1,\ldots, k\}}[g_i]_{C^\mu_t},
    \end{gather*}
    for $B\in W^{1,\infty}_{\loc}(\R^k)$ and $\lambda,\mu\in(0,1]$. Assertion \eqref{dcomp} follows from \eqref{prod}, \eqref{comp}, and the fact that
    \[
    A(g_1,\ldots,g_k)-A(h_1,\ldots,h_k)=\int_0^1 \sum_{j=1}^k \partial_jA((1-\sigma)g_1+\sigma h_1,\ldots,(1-\sigma)g_k+\sigma h_k)(g_j-h_j)\,d\sigma.
    \]
    (Note that all differentiations that appear in $\|\, \cdot\, \|_{BC^{\lambda,\lambda/n}}$ commute over the integral.)

    Assertions for scaled norms follow by their scale invariance.
  \end{proof}
  \begin{Proposition}[Contractivity property of lower order parabolic H\"{o}lder norms]\label{contraction}
    Let $-\infty<a<b<\infty$, $n\in\Z_{\ge 1}$, $k\in\{1,\ldots,n\}$, $\mu\in(0,1)$, and $f\in C^{k,k/n}(\R\times(a,b))$. Then {there is a constant $\beta=\beta_{k,k',\mu}>0$ such that}
    \[
    \|f\|_{BC^{k'+\mu,(k'+\mu)/n}(\R\times(a,b))}\le \|f(\cdot,a)\|_{BC^{k'+\mu}(\R)}+C_{k,k',\mu}(b-a)^{\beta\erase{(k,k')}}\|f\|_{BC^{k+\mu,(k+\mu)/n}(\R\times(a,b))}
    \]
    for all $k'\in\{0,\ldots,k-1\}$.
    In praticular, if $f(\cdot,a)\equiv 0$ in $\R$, then
    \[
    \|f\|_{BC^{k'+\mu,(k'+\mu)/n}(\R\times(a,b))}\le C(b-a)^\beta\|f\|_{BC^{k+\mu,(k+\mu)/n}(\R\times(a,b))}.
    \]
  \end{Proposition}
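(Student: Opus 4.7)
The plan is to bound each summand in the definition of $\|f\|_{BC^{k'+\mu,(k'+\mu)/n}(\R\times(a,b))}$ separately. Since $k'+\mu<k\le n$, any pair $(\ell,m)\in\Z_{\ge 0}^2$ with $\ell+nm\le k'+\mu$ forces $m=0$ (as $m\ge1$ would give $\ell+nm\ge n>k'+\mu$), so only pure spatial derivatives $\partial_x^\ell f$ with $0\le\ell\le k'$ contribute to the $L^\infty$ part of the left-hand side, together with the single spatial seminorm $[\partial_x^{k'}f]_{C^\mu_x}$ and the temporal seminorms $[\partial_x^\ell f]_{C^{(k'+\mu-\ell)/n}_t}$ for $0\le\ell\le k'$ (the index range $k'-n+1\le\ell+nm\le k'$ collapses to $0\le\ell\le k'$ because $k'-n+1\le0$).

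The key preliminary estimate I would establish is the smallness of the temporal oscillation
\[
g_\ell(x,t):=\partial_x^\ell f(x,t)-\partial_x^\ell f(x,a),\qquad 0\le\ell\le k'+1,
\]
namely $\|g_\ell\|_{L^\infty(\R\times(a,b))}\le C(b-a)^{\beta_\ell}\|f\|_{BC^{k+\mu,(k+\mu)/n}}$ for some $\beta_\ell>0$. In the main case $(k+\mu-\ell)/n\le 1$, which holds whenever $\ell\ge1$, or $\ell=0$ and $k<n$, one has $k-n+1\le\ell\le k$, so the seminorm $[\partial_x^\ell f]_{C^{(k+\mu-\ell)/n}_t}$ is one of the summands in $[f]_{C^{k+\mu,(k+\mu)/n}}$, and the bound is immediate. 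In the borderline case $\ell=0$, $k=n$, that seminorm would have exponent $(n+\mu)/n>1$ and is not available; instead I would use $f(x,t)-f(x,a)=\int_a^t\partial_tf(x,s)\,ds$ together with $\|\partial_tf\|_{L^\infty}\le\|f\|_{BC^{k+\mu,(k+\mu)/n}}$, which is available since $0+n\cdot1=n=k$.

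Given this, the three families of summands on the left-hand side are handled as follows. For the $L^\infty$ part I split $\|\partial_x^\ell f\|_{L^\infty}\le\|\partial_x^\ell f(\cdot,a)\|_{L^\infty}+\|g_\ell\|_{L^\infty}$, contributing $\|f(\cdot,a)\|_{BC^{k'+\mu}}$ and a small correction. For the spatial Hölder seminorm $[\partial_x^{k'}f(\cdot,t)]_{C^\mu_x}$, I bound $[\partial_x^{k'}f(\cdot,a)]_{C^\mu_x}$ by the initial-data norm and estimate $[g_{k'}(\cdot,t)]_{C^\mu_x}$ by splitting pairs according to $|x-y|\le1$ or $|x-y|>1$: for the former, use the representation $g_{k'}(x,t)-g_{k'}(y,t)=\int_y^x g_{k'+1}(z,t)\,dz$, which is legitimate because $k'+1\le k$, together with the key estimate on $g_{k'+1}$, yielding a factor $|x-y|^{1-\mu}(b-a)^{\beta_{k'+1}}$; for the latter, use $|g_{k'}(x,t)-g_{k'}(y,t)|\le 2\|g_{k'}\|_{L^\infty}$ divided by $|x-y|^\mu\ge1$. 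For the temporal Hölder seminorm $[\partial_x^\ell f]_{C^{(k'+\mu-\ell)/n}_t}$, in the main subcase I write $|\partial_x^\ell f(x,s)-\partial_x^\ell f(x,t)|\le[\partial_x^\ell f]_{C^{(k+\mu-\ell)/n}_t}|s-t|^{(k+\mu-\ell)/n}$ and extract the factor $|s-t|^{(k-k')/n}\le(b-a)^{(k-k')/n}$; in the borderline subcase $\ell=0$, $k=n$, I use $|f(x,s)-f(x,t)|\le|s-t|\|\partial_tf\|_{L^\infty}$ and extract $|s-t|^{1-(k'+\mu)/n}\le(b-a)^{1-(k'+\mu)/n}$, whose exponent is positive because $k'+\mu<n$.

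Taking $\beta$ to be the minimum of the positive exponents produced and summing yields the stated inequality; the particular case $f(\cdot,a)\equiv 0$ is then immediate. The main, though mild, obstacle is the borderline case $k=n$, $\ell=0$, where the naive temporal Hölder exponent $(k+\mu)/n$ exceeds $1$ and falls outside the $BC^{k+\mu,(k+\mu)/n}$ norm; the fix is precisely the time integration of $\partial_tf$ described above, after which the rest is routine bookkeeping.
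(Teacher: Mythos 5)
Your proof is correct. Note that the paper does not actually prove Proposition~\ref{contraction}; it only refers to \cite{Goess}*{Lemma 2.17}, so your argument supplies the omitted proof, and it is the natural one: split off the trace at $t=a$, and use the one extra order of regularity (via the temporal H\"{o}lder seminorms of order $(k+\mu-\ell)/n$, the fundamental theorem of calculus in $x$ for the spatial seminorm, and the splittings $|x-y|\lessgtr 1$, $|s-t|\lessgtr 1$) to extract positive powers of $b-a$. Your bookkeeping is accurate: on the left only $m=0$, $0\le\ell\le k'$ occur because $k'+\mu<n$; your decomposition keeps the coefficient of $\|f(\cdot,a)\|_{BC^{k'+\mu}(\R)}$ equal to $1$ as stated; and you correctly identify and handle the only delicate point, the borderline case $k=n$, $\ell=0$, where $[f]_{C^{(n+\mu)/n}_t}$ is not part of the $BC^{n+\mu,(n+\mu)/n}$ norm, by writing $f(x,t)-f(x,a)=\int_a^t\partial_t f(x,s)\,ds$ and using that $\|\partial_t f\|_{L^\infty}$ is one of the $L^\infty$ summands (since $0+n\cdot 1=k\le k+\mu$). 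One cosmetic remark: taking $\beta$ to be the minimum of the exponents yields the stated single-power bound directly only when $b-a\le 1$, which is the only regime the paper uses (cf.\ Lemmas~\ref{Lcontr} and~\ref{utrap}); for $b-a\ge 1$ it suffices to note that each lower-order term on the left is anyway bounded by $C\|f\|_{BC^{k+\mu,(k+\mu)/n}(\R\times(a,b))}$ (by the same $|x-y|\lessgtr1$, $|s-t|\lessgtr1$ splittings, without extracting a power of $b-a$), which is then $\le C(b-a)^{\beta}\|f\|_{BC^{k+\mu,(k+\mu)/n}(\R\times(a,b))}$. This is a two-line patch, not a gap.
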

  \begin{proof}
    The proof can be found in \cite{Goess}*{Lemma 2.17}.
  \end{proof}
  \begin{Remark}
    The contractivity property of lower order property implies that the quantities
    \[
    \|f(\cdot,t)\|_{BC^{k'+\mu}(\R)},\ \|f\|_{BC^{k'+\mu,(k'+\mu)/n}(\R\times(s,t))},\ \|f\|_{BC^{k'+\mu,(k'+\mu)/n}(\R\times(s,t))}'
    \]
    are uniformly (H\"{o}lder) continuous in $a<s<t<b$, for $f\in C^{k,k/n}(\R\times(a,b))$.
  \end{Remark}
  \subsection{Biharmonic heat equations}
  In this subsection, we mention some properties of solutions to the biharmonic heat equation
  \begin{equation}\label{BHE}
    w_t=-\partial_x^4 w+g\quad\textrm{in}\quad\R\times(0,T),\ w(\cdot,0)=w_0\quad\textrm{in}\quad\R.
  \end{equation}
  We define the \textit{biharmonic heat kernel} $b=b(x,t)$ by
  \[
  b(x,t):=\left[\mathcal{F}^{-1}_\xi e^{-t\xi^4}\right](x)=t^{-1/4}\overline{b}(t^{-1/4}x),\ \overline{b}(y):=\frac{1}{2\pi}\int_{\R} e^{-|\xi|^4+\sqrt{-1}y\xi}\,d\xi.
  \]
  We denote by $e^{-t\partial_x^4}h$ the function
  \[
  [e^{-t\partial_x^4}h](x):=\int_{\R}b(x-y,t)h(y)\,dy.
  \]
  Then for all $g\in BC^{0,0}(\R\times[0,T))$ and $w_0\in BC^4(\R)$, the solution $w\in BC^{4,1}(\R\times[0,T))$ to \eqref{BHE}, if exists, has the expression
  \[
  w(t)=e^{-t\partial_x^4}w_0+\int_0^t e^{-(t-s)\partial_x^4}g(s)\,ds.
  \]
  We also have a pointwise estimate
  \begin{equation}\label{besti}
    |\partial_x^\ell\partial_t^m b(x,t)|\le C_{\ell,m}t^{-(\ell+4m+1)/4}\exp\left(-\omega \frac{|x|^{4/3}}{t^{1/3}}\right)
  \end{equation}
  for $\ell,m\in\Z_{\ge 0}$, where $\omega>0$ is a constant. (See \cite{Sol}*{\S 4}.)
  \begin{Proposition}[The smoothing effect of the biharmonic heat equation]\label{Smoothing}
    Suppose that $w_0\in BC^0(\R)$. Then $w(t):=e^{-t\partial_x^4}w_0$ belongs to $C^\infty(\R\times(0,\infty))\cap C(\R\times[0,\infty))$ and solves \eqref{BHE} with $g\equiv 0$. Furthermore, $w$ satisfies
    \begin{equation}\label{smo1}
    \sup_{t>0}\|w\|_{BC^{\lambda,\lambda/4}(\R\times(t/2,t))}'\le C_{\lambda}\|w_0\|_{L^\infty(\R)}
    \end{equation}
    for all $\lambda\ge 0$. If $w_0\in W^{1,\infty}(\R)$, then $w$ also satisfies
    \begin{equation}\label{smo2}
    \sup_{t>0}(1+t)^{1/4}\|w_x\|_{BC^{\lambda,\lambda/4}(\R\times(t/2,t))}'\le C_{\lambda}\|w_0\|_{W^{1,\infty}(\R)}
    \end{equation}
    for all $\lambda\ge 1$.
  \end{Proposition}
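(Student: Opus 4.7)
The plan is to obtain everything from the pointwise kernel estimate \eqref{besti}, which encodes both the scaling and the decay of the biharmonic heat kernel. First, since \eqref{besti} shows that $\partial_x^\ell\partial_t^m b(\cdot,t)\in L^1(\R)$ for every $\ell,m\ge 0$ and $t>0$, I would differentiate under the integral sign to conclude that $w\in C^\infty(\R\times(0,\infty))$ and that $\partial_t w=-\partial_x^4 w$, using the kernel identity $\partial_t b=-\partial_x^4 b$. Continuity at $t=0$ follows from a standard approximation-of-identity argument: $\int_{\R}\bar b(y)\,dy=1$ (read off from the Fourier definition of $\bar b$), $\bar b\in L^1$ by \eqref{besti} with $\ell=m=0$, and $w_0\in BC^0(\R)$, so $w(\cdot,t)\to w_0$ locally uniformly, giving $w\in C(\R\times[0,\infty))$.

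For \eqref{smo1}, the starting point is the $L^\infty$--$L^\infty$ derivative estimate
\[
\|\partial_x^\ell\partial_t^m w(\cdot,s)\|_{L^\infty(\R)}\le C_{\ell,m}\,s^{-(\ell+4m)/4}\|w_0\|_{L^\infty(\R)}\qquad(s>0),
\]
obtained by integrating \eqref{besti} and computing $\|\partial_x^\ell\partial_t^m b(\cdot,s)\|_{L^1}\le C s^{-(\ell+4m)/4}$. On the window $(t/2,t)$, multiplying by the scale factor $(t/2)^{\ell/4+m}$ built into $\|\cdot\|'_{BC^{\lambda,\lambda/4}}$ exactly cancels the $s^{-(\ell+4m)/4}$, yielding a bound $C_\lambda\|w_0\|_{L^\infty}$ uniform in $t$. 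For the H\"older seminorms I would split each difference quotient into a short-range and a long-range piece: for spatial differences with $|x-y|\le s^{1/4}$ apply the mean value theorem and the above bound on one extra $x$-derivative, and for $|x-y|\ge s^{1/4}$ use the zeroth-order bound directly; an identical short/long split in $|s-s'|$ handles the time-H\"older pieces. Both cases yield $s^{-(\ell+4m+\mu)/4}\|w_0\|_{L^\infty}$ or $s^{-\lambda/4}\|w_0\|_{L^\infty}$, which is absorbed by the factor $(t/2)^{\lambda/4}$ in the scaled norm.

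For \eqref{smo2} I would split at $t=1$. For $t\le 1$, since $w_0\in W^{1,\infty}$ and $\partial_x$ commutes with $e^{-s\partial_x^4}$, one has $w_x=e^{-s\partial_x^4}(w_0)_x$ with $(w_0)_x\in L^\infty$, so applying \eqref{smo1} to $(w_0)_x$ in place of $w_0$ gives the desired bound, and the prefactor $(1+t)^{1/4}\le 2^{1/4}$ is harmless. For $t\ge 1$ I would instead exploit the extra decay that differentiation in $x$ produces: the same arguments that proved \eqref{smo1}, but applied to $\partial_x^{\ell+1}\partial_t^m w$ (i.e.\ pulling one derivative onto the kernel), give the scaled-norm bound of order $t^{-1/4}\|w_0\|_{L^\infty}$, which is exactly compensated by $(1+t)^{1/4}\le 2\,t^{1/4}$.

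There is no single obstacle so much as some book-keeping: the H\"older seminorm degenerates to an $L^\infty$ norm when $\lambda\in\Z$ or when $\ell+4m$ hits either endpoint of the index range in the time part, and one has to check that the short/long split still gives the right exponent. Beyond that the whole argument is a clean consequence of \eqref{besti}, the chain $|\partial_x^{\ell+1}\partial_t^m b|\le C s^{-1/4}|\partial_x^\ell\partial_t^m b|$ (up to the exponential factor), and the scale-invariance of the primed norms, which makes uniformity in $t$ automatic once a single window $(t_0/2,t_0)$ is handled.
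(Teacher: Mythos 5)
Your proposal is correct and follows essentially the same route as the paper: $L^\infty$ bounds on $\partial_x^\ell\partial_t^m w$ of order $t^{-\ell/4-m}\|w_0\|_{L^\infty}$ from the kernel estimate \eqref{besti}, H\"older seminorms by interpolation (your short/long-range split is exactly that), cancellation of the time factors by the scaled norm, and for \eqref{smo2} the combination of the bound $C\|(w_0)_x\|_{L^\infty}$ obtained by commuting $\partial_x$ onto $w_0$ with the bound $Ct^{-1/4}\|w_0\|_{L^\infty}$ from one extra kernel derivative. The only addition is that you spell out the smoothness and continuity at $t=0$, which the paper leaves implicit; no gaps.
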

  \begin{proof}
    It follows from \eqref{besti} that
    \[
    \|\partial_x^\ell\partial_t^m w(\cdot,t)\|_{L^\infty(\R)}\le Ct^{-\ell/4-m}\|w_0\|_{L^\infty(\R)}
    \]
    for all $\ell,m\in\Z_{\ge 0}$, which implies that
    \begin{equation}\label{smo3}
      \|\partial_x^\ell\partial_t^m w\|_{L^\infty(\R\times(t/2,t))}\le Ct^{-\ell/4-m}\|w_0\|_{L^\infty(\R)}.
    \end{equation}
    Furthermore, interpolation arguments yield that
    \begin{equation}\label{smo4}
      \begin{gathered}
        [\partial_x^\ell\partial_t^mw]_{C^\lambda_x(\R\times(t/2,t))}\le Ct^{-(\ell+\lambda)/4-m}\|w_0\|_{L^\infty(\R)},\\
       [\partial_x^\ell\partial_t^mw]_{C^\mu_t(\R\times(t/2,t))}\le Ct^{-\ell/4-(m+\mu)}\|w_0\|_{L^\infty(\R)}, 
      \end{gathered}
    \end{equation}
     for all $\ell,m\in\Z_{\ge 0}$ and $\lambda,\mu\in[0,1]$. By the definition of scaled H\"{o}lder norms, \eqref{smo3} and \eqref{smo4} imply \eqref{smo1}.
      
      Also, since $w_x(t)=e^{-t\partial_x^4}(w_0)_x$, it follows from \eqref{smo1} that
      \[
      \|w_x\|_{BC^{\lambda,\lambda/4}(\R\times(t/2,t))}^{\prime}\le C\|(w_0)_x\|_{L^\infty(\R)}.
      \]
      On the other hand, \eqref{smo1} directly implies that
      \[
      \|w_x\|_{BC^{\lambda,\lambda/4}(\R\times(t/2,t))}^{\prime}\le Ct^{-1/4}\|w_0\|_{L^\infty(\R)}.
      \]
      Combining these estimates, we deduce \eqref{smo2}.
  \end{proof}

  \subsection{Parabolic Schauder estimates}
  In this subsection, we mention the parabolic Schauder estimates for the Cauchy problem
  \begin{equation}\label{LCP}
    w_t=\sum_{i=0}^4 a_i(x,t)\partial_x^i w+g\ \textrm{in}\ \R\times(0,T),\ w(\cdot,0)=w_0\ \textrm{in}\ \R.
  \end{equation}
  \begin{Proposition}[Parabolic Schauder estimate, \cite{Sol}*{Theorem 4.10 in Chapter \IV}]\label{Schauder}
    Suppose that $\lambda\in(0,\infty)\setminus\Z$. Let $a_i,g\in BC^{\lambda,\lambda/4}(\R\times(0,T))$ for $i\in\{0,1,2,3,4\}$, and assume the uniform parabolicity
    \[
    \sup\limits_{x\in\R,t\in(0,T)}a_4(x,t)<0.
    \]
    Then for any $w_0\in BC^{4+\lambda}(\R)$ there is a unique solution $w\in BC^{4+\lambda,1+\lambda/4}(\R\times(0,T))$ to the problem \eqref{LCP}. Furthermore, $w$ satisfies
    \[
    \|w\|_{BC^{4+\lambda,1+\lambda/4}(\R\times(0,T))}\le C_{\lambda,\nu,T}\left(\|w_0\|_{BC^{4+\lambda}(\R)}+\|g\|_{BC^{\lambda,\lambda/4}(\R\times(0,T))}\right),
    \]
    where $\nu>0$ is such that
    \[
    \|a_i\|_{BC^{\lambda,\lambda/4}(\R\times(0,T))}\le\nu\ \textrm{for}\ i\in\{0,1,2,3,4\},\ \sup\limits_{x\in\R,t\in(0,T)}a_4(x,t)\le-\nu^{-1},
    \]
    and the constant $C$ is nondecreasing in $T$.
  \end{Proposition}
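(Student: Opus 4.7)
The statement is a standard fourth-order parabolic Schauder estimate, and my approach would follow the classical Schauder program adapted from Solonnikov's book, proceeding in three broad stages: a model problem, a freezing-of-coefficients reduction, and a continuity-method existence argument.

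First, I would establish the a priori estimate for the constant-coefficient model problem $w_t = -\partial_x^4 w + g$ on $\R\times(0,T)$. Using the biharmonic heat kernel representation together with the pointwise bounds \eqref{besti}, one writes $\partial_x^4 w$ as a singular convolution of $g$, and estimates the resulting operator in $BC^{\lambda,\lambda/4}$ via the classical Campanato/oscillation characterization of Hölder spaces: one splits the integral into the usual near/far pieces relative to the parabolic ball of radius $|t-s|^{1/4}+|x-y|$, uses cancellation $\int \partial_x^4 b(\cdot,\tau)\,dx = 0$ for the principal-value part, and exploits the Gaussian-type decay of $b$ and its derivatives. This gives the model Schauder bound $\|w\|_{BC^{4+\lambda,1+\lambda/4}} \le C(\|w_0\|_{BC^{4+\lambda}} + \|g\|_{BC^{\lambda,\lambda/4}})$, provided $\lambda\notin\Z$ (the non-integer condition is essential for the Hölder modulus of the singular operator to be finite).

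Next, I would treat the variable-coefficient case by freezing coefficients. Fix a parabolic cylinder $Q_r(x_0,t_0)=(x_0-r,x_0+r)\times(t_0-r^4,t_0)$ with $r$ small, and in $Q_r$ rewrite the equation as
\begin{equation*}
w_t = a_4(x_0,t_0)\partial_x^4 w + \bigl[(a_4(x,t)-a_4(x_0,t_0))\partial_x^4 w\bigr] + \sum_{i=0}^3 a_i\partial_x^i w + g.
\end{equation*}
The first bracket contributes a factor $[a_4]_{C^{\lambda,\lambda/4}} r^\lambda$, which is small for small $r$, and the lower-order terms are absorbed by the interpolation inequality $\|\partial_x^i w\|_{BC^{\lambda,\lambda/4}} \le \eta\|w\|_{BC^{4+\lambda,1+\lambda/4}}+C_\eta\|w\|_{L^\infty}$ for $i\le 3$. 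Applying the model estimate after multiplying by a cutoff (and accounting for the commutator with $\partial_x^4$, which produces again lower-order terms), and then summing over a locally finite cover of $\R\times(0,T)$ by such cylinders, yields the global a priori estimate. The $L^\infty$-norm of $w$ is controlled by the data through an energy/maximum-principle argument for the fourth-order equation, or equivalently by integrating the estimate forward in time from $t=0$.

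Finally, existence follows by the method of continuity: connect the operator $L_1 w := \sum a_i \partial_x^i w$ to the solvable model operator $L_0 w := -\partial_x^4 w$ via $L_\tau := (1-\tau)L_0 + \tau L_1$ for $\tau\in[0,1]$, which remains uniformly parabolic and has coefficients of comparable norm. The uniform a priori estimate above, combined with the solvability at $\tau=0$ (Proposition~\ref{Smoothing}), propagates solvability to all $\tau\in[0,1]$. Uniqueness is immediate from the a priori estimate applied to the difference of two solutions with zero data. The main obstacle in carrying this out from scratch is the careful bookkeeping of the freezing argument for the fourth-order operator — especially the Hölder estimates of the singular kernel $\partial_x^4 b$ against a Hölder density — but for our purposes it suffices to invoke Solonnikov's treatment, where these technical steps are executed in full detail.
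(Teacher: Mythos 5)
The paper offers no proof of this proposition at all—it is imported verbatim from Solonnikov (\cite{Sol}, Theorem 4.10 in Chapter \IV), which is precisely what you defer to at the end of your sketch, and your outline (kernel/potential estimates for the constant-coefficient model problem, freezing of coefficients with interpolation to absorb lower-order terms, method of continuity) is the standard program that reference carries out, so your proposal is consistent with the paper's approach. The only caveat worth recording is that a maximum principle is not available for the fourth-order operator, so the $L^\infty$ control of $w$ must indeed come from the kernel representation/Gronwall-in-time argument you mention as the alternative, not from any maximum-principle reasoning.
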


  \begin{Proposition}[Time-local parabolic Schauder estimate,\cite{Sol}*{Theorem 4.11 in Chapter \IV}]\label{locSchauder}
    Assume the same conditions as in Proposition~\ref{Schauder}. Then
    \[
    \|w\|_{BC^{4+\lambda,1+\lambda/4}(\R\times(T',T))}\le C_{\lambda,\nu,T',T}\left(\|w\|_{L^\infty(\R\times(0,T))}+\|g\|_{BC^{\lambda,\lambda/4}(\R\times(0,T))}\right)
    \]
    for all $T'\in(0,T)$.
  \end{Proposition}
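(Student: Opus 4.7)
The plan is to reduce Proposition~\ref{locSchauder} to the global estimate in Proposition~\ref{Schauder} by a smooth cutoff in time, together with a finite bootstrap on the H\"{o}lder regularity of the right-hand side. Fix $\tau\in(0,T')$ and choose $\chi\in C^\infty([0,T])$ with $\chi\equiv 0$ on $[0,\tau]$ and $\chi\equiv 1$ on $[T',T]$; set $\tilde w:=\chi w$. A direct computation yields
\[
\tilde w_t=\sum_{i=0}^4 a_i(x,t)\partial_x^i\tilde w+\chi(t)g(x,t)+\chi'(t)w(x,t)\ \textrm{in}\ \R\times(0,T),\qquad\tilde w(\cdot,0)\equiv 0.
\]
Since $\tilde w\equiv w$ on $\R\times(T',T)$, an application of Proposition~\ref{Schauder} to $\tilde w$ would deliver the claim, provided one can dominate $\|\chi' w\|_{BC^{\lambda,\lambda/4}(\R\times(0,T))}$ purely in terms of $\|w\|_{L^\infty}$ and $\|g\|_{BC^{\lambda,\lambda/4}}$.

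The obstacle is that a priori $\chi'w$ inherits only the $L^\infty$-regularity of $w$, not H\"{o}lder regularity. I would handle this by a finite bootstrap. Introduce times $\tau=\tau_0<\tau_1<\cdots<\tau_N=T'$ and cutoffs $\chi_j\in C^\infty([0,T])$ vanishing on $[0,\tau_j]$ and equal to one on $[\tau_{j+1},T]$. At stage $j$, the same cutoff manipulation applied to $\tilde w_j:=\chi_j w$ produces a forcing $\chi_j' w$ supported in $[\tau_j,\tau_{j+1}]$, on which $w$ already enjoys the regularity gained at stage $j-1$. Because one application of Proposition~\ref{Schauder} lifts the solution by four spatial (equivalently one time) derivatives, only $N=N(\lambda)$ stages are needed before the output reaches $BC^{4+\lambda,1+\lambda/4}$ on $\R\times(T',T)$, with all constants being absorbed into $C_{\lambda,\nu,T',T}$.

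The hardest step is initializing the bootstrap: upgrading $w\in L^\infty(\R\times(0,T))$ to $w\in BC^{\mu,\mu/4}(\R\times(\tau_1,T))$ for some $\mu>0$. I would proceed via a freezing-of-coefficients argument on small parabolic balls centered at interior points, reducing the problem to the constant-coefficient equation $\tilde w_t=-\partial_x^4\tilde w+G$ with $G$ collecting $g$ and the commutator error from freezing. Duhamel's formula together with the pointwise biharmonic-kernel estimate \eqref{besti} then delivers pointwise H\"{o}lder-type control of $\tilde w$ in terms of $\|w\|_{L^\infty}$ and $\|G\|_{L^\infty}$, after which the Schauder iteration proceeds routinely and closes the argument.
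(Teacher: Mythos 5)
The paper does not prove this proposition at all — it is quoted from Solonnikov \cite{Sol} — so any self-contained argument is welcome; but your sketch has a genuine gap exactly at the step you yourself single out as the hardest one. In the initialization you freeze coefficients and claim that Duhamel plus the kernel bound \eqref{besti} gives H\"older control of $\tilde w$ ``in terms of $\|w\|_{L^\infty}$ and $\|G\|_{L^\infty}$''. But $G$ is not controlled by $\|w\|_{L^\infty}$: the error from freezing is $\bigl(a_4(x,t)-a_4(x_0,t_0)\bigr)\partial_x^4 w+\sum_{i\le 3}a_i\partial_x^i w$ (plus further terms with $\partial_x^iw$, $i\le 3$, if you also cut off in space), so $\|G\|_{L^\infty}$ involves up to four spatial derivatives of $w$ — precisely the quantity the proposition is supposed to estimate. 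For a fourth-order equation there is no De Giorgi--Nash--Moser type lift from $L^\infty$ to $C^{\mu,\mu/4}$, so this circularity cannot be waved away; it has to be resolved by a smallness-and-absorption argument (the factor $\|a_4-a_4(x_0,t_0)\|_{L^\infty(Q_r)}\le \nu r^{\lambda}$ is small, and the derivative terms must be reabsorbed into the norm being estimated), which your write-up does not set up. As stated, the bootstrap never gets off the ground.

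The good news is that the missing ingredient also makes most of your machinery unnecessary. Since you ``assume the same conditions as in Proposition~\ref{Schauder}'', the solution already lies in $BC^{4+\lambda,1+\lambda/4}(\R\times(0,T))$ (with a bound depending on $\|w_0\|_{BC^{4+\lambda}}$, which is the only thing one must remove), so the quantity $\phi(s):=\|w\|_{BC^{4+\lambda,1+\lambda/4}(\R\times(s,T))}$ is finite for $s>0$. Then a single time cutoff as in your first display, combined with the interpolation inequality
\[
\|w\|_{BC^{\lambda,\lambda/4}(\R\times(s,T))}\le \varepsilon\,\|w\|_{BC^{4+\lambda,1+\lambda/4}(\R\times(s,T))}+C_\varepsilon\|w\|_{L^\infty(\R\times(s,T))}
\]
applied to the forcing term $\chi' w$, gives $\phi(s')\le C\bigl(\|g\|_{BC^{\lambda,\lambda/4}}+\varepsilon\,\phi(s)+C_\varepsilon(s'-s)^{-N}\|w\|_{L^\infty}\bigr)$ for $0<s<s'\le T'$; the standard iteration (absorption) lemma for such inequalities on nested intervals then yields the claim with a constant depending only on $\lambda,\nu,T',T$. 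This replaces both your multi-stage regularity ladder and the problematic freezing-of-coefficients initialization; alternatively, simply keep the citation to \cite{Sol} as the paper does.
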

  
  Furthermore, {by applying these estimates to the function
  \[
  \overline{w}(y,s):=w(T^{1/4}y,Ts),\quad (y,s)\in\R\times(0,1),
  \]
  which is a solution to the equation
  \[
  \overline{w}_t=\sum_{i=0}^4 T^{1-i/4}a_i(T^{1/4}y,Ts)\partial_y^i\overline{w}+Tg(T^{1/4}y,Ts)\ \textrm{in}\ \R\times(0,1),
  \]
  we obtain the following Schauder estimates for scaled H\"{o}lder norms.
  }
  \begin{Proposition}\label{Schauder'}
    Assume the same conditions as in Proposition~\ref{Schauder}, and additionally that $w_0\equiv 0$. Then $w$ satisfies
    \[
    \|w\|_{BC^{4+\lambda,1+\lambda/4}(\R\times(0,T))}'\le C_{\lambda,\nu}T\|g\|_{BC^{\lambda,\lambda/4}(\R\times(0,T))}',
    \]
    where $\nu>0$ is such that
    \[
    T^{1-i/4}\|a_i\|_{BC^{\lambda,\lambda/4}(\R\times(0,T))}'\le\nu\ \textrm{for}\ i\in\{0,1,2,3,4\},\ \sup\limits_{x\in\R,t\in(0,T)}a_4(x,t)\le-\nu^{-1}.
    \]
  \end{Proposition}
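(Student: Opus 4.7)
The plan is to reduce the scaled estimate to the standard Schauder estimate (Proposition~\ref{Schauder}) by a parabolic rescaling that normalizes the time interval to $(0,1)$, so that the resulting constant depends only on $\lambda$ and $\nu$ and not on $T$. Concretely, set
\[
\overline{w}(y,s) := w(T^{1/4}y,\,Ts),\quad(y,s)\in\R\times(0,1).
\]
By the chain rule, $\overline{w}$ solves
\[
\overline{w}_s=\sum_{i=0}^4 \overline{a}_i(y,s)\,\partial_y^i \overline{w}+\overline{g}(y,s)\ \textrm{in}\ \R\times(0,1),\qquad \overline{w}(\cdot,0)\equiv 0\ \textrm{in}\ \R,
\]
where $\overline{a}_i(y,s):=T^{1-i/4}a_i(T^{1/4}y,Ts)$ and $\overline{g}(y,s):=T\,g(T^{1/4}y,Ts)$; the vanishing initial datum comes from the hypothesis $w_0\equiv0$.

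The next step is to transfer the structural hypotheses to the rescaled problem with constants independent of $T$. By the scale invariance in Remark~\ref{Holrmk}(3) with $n=4$ and $\sigma=T^{1/4}$, and using that primed and unprimed H\"older norms agree on a unit-length time interval, I obtain for each $i\in\{0,\ldots,4\}$
\[
\|\overline{a}_i\|_{BC^{\lambda,\lambda/4}(\R\times(0,1))}=T^{1-i/4}\|a_i\|_{BC^{\lambda,\lambda/4}(\R\times(0,T))}'\le\nu.
\]
Uniform parabolicity is preserved with the same constant $\nu$: since $T^{1-4/4}=1$, one has $\sup \overline{a}_4=\sup a_4\le-\nu^{-1}$.

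Applying Proposition~\ref{Schauder} on $\R\times(0,1)$ (with zero initial datum) then yields
\[
\|\overline{w}\|_{BC^{4+\lambda,1+\lambda/4}(\R\times(0,1))}\le C_{\lambda,\nu}\,\|\overline{g}\|_{BC^{\lambda,\lambda/4}(\R\times(0,1))},
\]
the constant depending only on $\lambda$ and $\nu$ because the time interval now has fixed length $1$. Reverting the scaling via Remark~\ref{Holrmk}(3) one last time, the left-hand side equals $\|w\|_{BC^{4+\lambda,1+\lambda/4}(\R\times(0,T))}'$, while the right-hand side equals $T\,\|g\|_{BC^{\lambda,\lambda/4}(\R\times(0,T))}'$, the extra factor $T$ arising from the $T$ multiplying $g$ in the rescaled equation. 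This is the desired estimate.

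The argument is essentially bookkeeping of scaling factors; the only point that requires any care is verifying that the coefficient bounds and the parabolicity constant carry over to the rescaled problem without introducing additional $T$-dependence, which is precisely what the scale invariance of the primed H\"older norms accomplishes.
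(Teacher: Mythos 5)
Your proposal is correct and is exactly the paper's argument: the authors also obtain Proposition~\ref{Schauder'} by rescaling to $\overline{w}(y,s)=w(T^{1/4}y,Ts)$ on $\R\times(0,1)$, applying Proposition~\ref{Schauder} there, and using the scale invariance of the primed norms (Remark~\ref{Holrmk}(3)) together with the fact that primed and unprimed norms coincide on a unit time interval. Your bookkeeping of the factors $T^{1-i/4}$ for the coefficients, the unchanged parabolicity constant, and the factor $T$ on $g$ all check out.
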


\begin{Proposition}\label{locSchauder'}
    Assume the same conditions as in Proposition~\ref{Schauder}. Then
    \[
    \|w\|_{BC^{4+\lambda,1+\lambda/4}(\R\times(\sigma T,T))}'\le C_{\lambda,\nu,\sigma}\left(\|w\|_{L^\infty(\R\times(0,T))}+T\|g\|_{BC^{\lambda,\lambda/4}(\R\times(0,T))}'\right)
    \]
    for all $\sigma\in(0,1)$, {where $\nu$ is as in Proposition~\ref{Schauder'}}.
  \end{Proposition}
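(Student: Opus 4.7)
The strategy is to mimic the rescaling trick used for Proposition~\ref{Schauder'} outlined in the excerpt: reduce the statement to the unscaled time-local Schauder estimate (Proposition~\ref{locSchauder}) on the fixed time interval $(0,1)$ by introducing
\[
\overline{w}(y,s):=w(T^{1/4}y,Ts),\quad (y,s)\in\R\times(0,1),
\]
and then transporting the resulting inequality back to $w$ via the scale-invariance of the primed H\"{o}lder norms (Remark~\ref{Holrmk}~(3)). The point is that on a time interval of length exactly $1$, the primed and unprimed norms coincide, so the unscaled estimate of Proposition~\ref{locSchauder} applied at unit time can be reinterpreted as a primed-norm estimate.

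First I would compute that $\overline{w}$ solves the equation
\[
\partial_s\overline{w}=\sum_{i=0}^4\overline{a}_i(y,s)\partial_y^i\overline{w}+\overline{g}(y,s)\quad\text{in }\R\times(0,1),
\]
with $\overline{a}_i(y,s):=T^{1-i/4}a_i(T^{1/4}y,Ts)$ and $\overline{g}(y,s):=Tg(T^{1/4}y,Ts)$, noting $\sup\overline{a}_4=\sup a_4\le-\nu^{-1}$. Using the scale invariance of $\|\cdot\|'_{BC^{\lambda,\lambda/4}}$ together with the identity $\|\cdot\|'_{BC^{\lambda,\lambda/4}(\R\times(0,1))}=\|\cdot\|_{BC^{\lambda,\lambda/4}(\R\times(0,1))}$, I would verify
\[
\|\overline{a}_i\|_{BC^{\lambda,\lambda/4}(\R\times(0,1))}=T^{1-i/4}\|a_i\|'_{BC^{\lambda,\lambda/4}(\R\times(0,T))}\le\nu,\quad \|\overline{g}\|_{BC^{\lambda,\lambda/4}(\R\times(0,1))}=T\|g\|'_{BC^{\lambda,\lambda/4}(\R\times(0,T))}.
\]

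Next I would apply Proposition~\ref{locSchauder} with final time $1$ and interior time $\sigma$ to obtain
\[
\|\overline{w}\|_{BC^{4+\lambda,1+\lambda/4}(\R\times(\sigma,1))}\le C_{\lambda,\nu,\sigma}\Bigl(\|\overline{w}\|_{L^\infty(\R\times(0,1))}+\|\overline{g}\|_{BC^{\lambda,\lambda/4}(\R\times(0,1))}\Bigr).
\]
Since $1-\sigma\le 1$, every weight $(1-\sigma)^{\ell/4+m}$ appearing in the primed norm is bounded by $1$, hence the primed norm on $\R\times(\sigma,1)$ is dominated by the unscaled one. Combining this with the scale-invariance identity
\[
\|\overline{w}\|'_{BC^{4+\lambda,1+\lambda/4}(\R\times(\sigma,1))}=\|w\|'_{BC^{4+\lambda,1+\lambda/4}(\R\times(\sigma T,T))}
\]
and the substitutions of the $L^\infty$- and source-norms computed above yields
\[
\|w\|'_{BC^{4+\lambda,1+\lambda/4}(\R\times(\sigma T,T))}\le C_{\lambda,\nu,\sigma}\Bigl(\|w\|_{L^\infty(\R\times(0,T))}+T\|g\|'_{BC^{\lambda,\lambda/4}(\R\times(0,T))}\Bigr),
\]
which is the claimed bound.

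I do not anticipate any real obstacle here; the proof is a direct adaptation of the rescaling argument already sketched for Proposition~\ref{Schauder'}, and the only thing to watch carefully is the bookkeeping of the $T^{1-i/4}$ and $T$ factors appearing after the change of variables together with the fact that $1-\sigma\le 1$ makes the primed/unprimed transition on $\R\times(\sigma,1)$ free of any $\sigma$-dependent blow-up.
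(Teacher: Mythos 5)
Your proposal is correct and follows essentially the same route as the paper, which obtains Proposition~\ref{locSchauder'} precisely by applying Proposition~\ref{locSchauder} to the rescaled function $\overline{w}(y,s)=w(T^{1/4}y,Ts)$ on $\R\times(0,1)$ and using the scale invariance of the primed norms. Your bookkeeping of the factors $T^{1-i/4}$ and $T$, the identification of primed and unprimed norms on unit-length intervals, and the observation that $(1-\sigma)^{\ell/4+m}\le 1$ are exactly the points needed, so nothing is missing.
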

  
  \section{Solvability of a general surface diffusion flow}

As usual, our strategy of constructing a global-in-time solution to a general surface diffusion flow \eqref{SDE'} is extending a local-in-time solution. However, since there are no available results even for the local-in-time solvability of the problem \eqref{SDE'}, we have to start with establishing it. In this section, we prove the unique solvability of the problem \eqref{SDE'} in a certain H\"{o}lder space. We define spaces $Y$ and $X_{T,u_0}$ by
  \[
  Y:=\left\{u\in C^{4+\mu}(\R);\ u_x\in L^\infty(\R)\right\},
  \]
  \[
  X_{T,u_0}:=\left\{u_0+v;\ v\in BC^{4+\mu,1+\mu/4}(\R\times(0,T)),v(\cdot,0)=0\right\},
  \]
  for $T\in(0,\infty]$, $u_0\in Y$. We also define a subset $X_{T,u_0}^R$ of $X_{T,u_0}$ by
  \[
  X_{T,u_0}^R:=\{u\in X_{T,u_0};\ \|u-u_0\|_{BC^{4+\mu,1+\mu/4}(\R\times(0,T))}\le R\}
  \]
  for $R>0$.
  \begin{Remark}
    $X_{T,u_0}$ is a complete metric space with the metric $\|u-v\|_{BC^{4+\mu,1+\mu/4}(\R\times(0,T))}$. Also, $\|(\, \cdot\, )_x\|_{BC^{3+\mu}(\R)}$ is a complete seminorm on $Y$, that is, for every sequence $\{w_n\}_{n=1}^\infty\subset Y$ satisfying $\lim\limits_{m,n\to\infty}\|(w_m-w_n)_x\|_{BC^{3+\mu}(\R)}=0$, there is a (non-unique) function $w\in Y$ such that $\lim\limits_{n\to\infty}\|(w_n-w)_x\|_{BC^{3+\mu}(\R)}=0$.
  \end{Remark}
  Using these notations, our solvability theorem for the problem \eqref{SDE'} is stated as follows.
  \begin{Theorem}\label{locexistence}
    Assume \eqref{fassump} and \eqref{u0assmup}. Then problem \eqref{SDE'} possesses a unique local-in-time solution $u\in X_{T(u_0),u_0}$. Furthermore, the maximal existence time $T(u_0)$ is bounded from below on each subset of $Y$ bounded with respect to the seminorm $\|(\, \cdot\, )_x\|_{BC^{3+\mu}(\R)}$.
  \end{Theorem}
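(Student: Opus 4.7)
The plan is to construct the local-in-time solution by Banach's fixed-point theorem on the ball $X^R_{T,u_0}$. First I would rewrite \eqref{SDE'} in quasilinear form as
\[
u_t = A(u_x,u_{xx})\,u_{xxxx} + B(u_x,u_{xx},u_{xxx}),\qquad A(p,q) := -\frac{f'(-q/(1+p^2)^{3/2})}{(1+p^2)^2},
\]
where $A<0$ by \eqref{fassump} and $B$ is smooth (polynomial of degree two in its last argument). Writing $v:=u-u_0$ (so $v(\cdot,0)=0$) converts the problem to a quasilinear equation for $v$ with zero initial data, whose inhomogeneity depends on derivatives of $u_0$ up to fourth order.

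For $\tilde u = u_0 + \tilde v \in X^R_{T,u_0}$, define $\Phi(\tilde u):=u_0+v$ where $v$ is the unique solution of the linearized problem
\[
v_t - A(\tilde u_x,\tilde u_{xx})\,v_{xxxx} = A(\tilde u_x,\tilde u_{xx})\,u_{0,xxxx} + B(\tilde u_x,\tilde u_{xx},\tilde u_{xxx}),\quad v(\cdot,0)=0.
\]
Proposition~\ref{CompProd} places the coefficient $A(\tilde u_x,\tilde u_{xx})$ in $BC^{2+\mu,(2+\mu)/4}$ and the right-hand side in $BC^{\mu,\mu/4}$, with bounds depending only on $R$, $f$ and $\|(u_0)_x\|_{BC^{3+\mu}}$; moreover $A$ is uniformly parabolic since $f'>0$. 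Proposition~\ref{Schauder} then yields a unique $v\in BC^{4+\mu,1+\mu/4}$ with a bound of the same form. Choosing $R=R(f,\|(u_0)_x\|_{BC^{3+\mu}})$ large enough, $\Phi$ self-maps $X^R_{T,u_0}$ for every $T\le 1$.

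For the contraction, if $\tilde u_i=u_0+\tilde v_i\in X^R_{T,u_0}$ and $w:=\Phi(\tilde u_1)-\Phi(\tilde u_2)$, then $w$ solves a linear problem with zero initial data, whose coefficient- and inhomogeneity-differences are bounded by $\|\tilde v_1-\tilde v_2\|_{BC^{3+\mu,(3+\mu)/4}}$ through Proposition~\ref{CompProd}~(ii), since $A,B$ depend only on spatial derivatives of order at most three. Because $\tilde v_1-\tilde v_2$ vanishes at $t=0$, Proposition~\ref{contraction} gives
\[
\|\tilde v_1-\tilde v_2\|_{BC^{3+\mu,(3+\mu)/4}} \le C\,T^\beta\,\|\tilde v_1-\tilde v_2\|_{BC^{4+\mu,1+\mu/4}}
\]
for some $\beta>0$. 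Combined with Proposition~\ref{Schauder} applied to $w$, this makes $\Phi$ a strict contraction for $T=T(u_0)$ small enough (depending only on $f$ and $\|(u_0)_x\|_{BC^{3+\mu}}$). Banach's fixed-point theorem produces the unique fixed point in $X^R_{T,u_0}$. Unconditional uniqueness in the full class $X_{T(u_0),u_0}$ follows by applying the fixed-point argument in a sufficiently large ball containing any two given solutions and bootstrapping finitely many times; the uniform lower bound on $T(u_0)$ over sets bounded in $\|(\cdot)_x\|_{BC^{3+\mu}}$ is then automatic, since all constants depend only on this seminorm and on $f$.

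The main obstacle I foresee is that the inhomogeneity $A(\tilde u_x,\tilde u_{xx})\,u_{0,xxxx}$ in the fixed-point equation is of size $O(\|(u_0)_x\|_{BC^{3+\mu}})$ and does not tend to zero as $T\to 0$; hence the radius $R$ of the ball cannot be chosen small and must instead be fixed in terms of the initial data. Smallness is then recovered solely through the $T^\beta$ factor of Proposition~\ref{contraction}, and its availability crucially relies on the structural fact that the nonlinearities $A,B$ involve spatial derivatives of strictly lower order than the top order appearing in $\|\cdot\|_{BC^{4+\mu,1+\mu/4}}$.
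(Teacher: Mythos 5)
Your overall strategy coincides with the paper's: rewrite \eqref{SDE'} as $u_t=-A(u_x,u_{xx})u_{xxxx}+B(u_x,u_{xx},u_{xxx})$, run a fixed-point argument on the ball $X^R_{T,u_0}$ built on the parabolic Schauder estimate (Proposition~\ref{Schauder}), use Proposition~\ref{CompProd} for the composition estimates and Proposition~\ref{contraction} to extract a factor $T^\beta$ from the vanishing of the perturbation at $t=0$, and obtain unconditional uniqueness and the uniform lower bound on $T(u_0)$ by restarting the contraction; all constants depend only on $f$, $\mu$ and $M:=\|(u_0)_x\|_{BC^{3+\mu}(\R)}$. The only structural difference is that you freeze the fourth-order coefficient at the current iterate, $A(\tilde u_x,\tilde u_{xx})$, whereas the paper freezes it at the initial data, $A((u_0)_x,(u_0)_{xx})$, and moves the difference $\bigl(A((u_0)_x,(u_0)_{xx})-A(u_x,u_{xx})\bigr)u_{xxxx}$ into the forcing; both are legitimate, but the paper's choice has the advantage that the solution operator $\Gamma$ and its Schauder constant depend only on $M$, not on $R$.

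The one step that does not hold as written is the self-mapping claim ``choosing $R$ large enough, $\Phi$ self-maps $X^R_{T,u_0}$ for every $T\le 1$.'' In your scheme the constant in Proposition~\ref{Schauder} depends on $\nu$, i.e.\ on the $BC^{\mu,\mu/4}$ norm of $A(\tilde u_x,\tilde u_{xx})$ and on its ellipticity lower bound, and both of these depend on $R$ (through the range of $\tilde u_{xx}$ on the ball). The resulting bound on $\|\Phi(\tilde u)-u_0\|_{BC^{4+\mu,1+\mu/4}}$ is of the form $C(M+R)$, and there is no reason that $C(M+R)\le R$ for large $R$, since Schauder constants are not quantitatively controlled in terms of the coefficient size; nor can the inclusion hold uniformly for every $T\le 1$, because the only available smallness is the $T^\beta$ factor. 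The repair is exactly what the paper does: first establish the $\tfrac12$-contraction on $X^R_{T,u_0}$ for $T\le\tau(M,R)$, observe that $\|\Phi(u_0)-u_0\|_{BC^{4+\mu,1+\mu/4}(\R\times(0,1))}\le C_*(M)$ (this only involves data frozen at $u_0$), and then choose $R>2C_*$ and $T<\min\{\tau(M,R),1\}$, so that $\|\Phi(\tilde u)-u_0\|\le\tfrac12\|\tilde u-u_0\|+C_*<R$. With this reordering of quantifiers (contraction first, radius second, time horizon last) your argument is complete and is essentially the paper's proof; the uniqueness-by-restarting and the lower bound on $T(u_0)$ then go through as you indicate, since $\|u_x(\cdot,t)\|_{BC^{3+\mu}(\R)}\le M+CR$ along any solution in the ball, giving a uniform step size.
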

  Our strategy of constructing a solution to the problem \eqref{SDE'} is to construct it as a perturbation from a solution to a linear parabolic equation and to use the contractivity property of lower order parabolic H\"{o}lder norms (Proposition~\ref{contraction}) to control perturbations. We first calculate
  \begin{align*}
    &\left(\frac{1}{(1+u_x^2)^{1/2}}\left(f\left(-\frac{u_{xx}}{(1+u_x^2)^{3/2}}\right)\right)_x\right)_x\\
    &=-\left(\left(\frac{u_{xxx}}{(1+u_x^2)^2}-\frac{3u_xu_{xx}^2}{(1+u_x^2)^3}\right)f'\left(-\frac{u_{xx}}{(1+u_x^2)^{3/2}}\right)\right)_x\\
    &=-A\left(u_x,u_{xx}\right)u_{xxxx}+B\left(u_x,u_{xx},u_{xxx}\right),
  \end{align*}
  where
  \begin{align*}
    A(q,r)&:=\frac{1}{(1+q^2)^2}f'\left(-\frac{r}{(1+q^2)^{3/2}}\right),\\
    B(q,r,s)&:=\left(\frac{10qrs+3r^3}{(1+q^2)^3}-\frac{18q^2r^3}{(1+q^2)^4}\right)f'\left(-\frac{r}{(1+q^2)^{3/2}}\right)\\
    &\quad +\frac{1}{(1+q^2)^{1/2}}\left(\frac{s}{(1+q^2)^{3/2}}-\frac{3qr^2}{(1+q^2)^{5/2}}\right)^2f''\left(-\frac{r}{(1+q^2)^{3/2}}\right).
  \end{align*}

    We fix $u_0\in Y$ and $M>0$ such that $\|(u_0)_x\|_{BC^{3+\mu}(\R)}\le M$. Since $A((u_0)_x,(u_0)_{xx})\in C^{2+\mu}(\R)$ and
  \[
  A((u_0)_x,(u_0)_{xx})\ge\frac{1}{(1+\|(u_0)_x\|_{L^\infty})^2}\inf\left\{f'(-\kappa):\ |\kappa|\le\left\|\frac{(u_0)_{xx}}{(1+(u_0)_x^2)^{3/2}}\right\|_{L^\infty}\right\}\ge C_M,
  \]
  it follows from Proposition~\ref{Schauder} that the Cauchy problem
  \begin{equation}\label{CP}
    v_t=-A((u_0)_x,(u_0)_{xx})v_{xxxx}+F\quad\textrm{in}\quad\R\times(0,T),\ v(\cdot,0)=0\quad\textrm{in}\quad\R\textrm{,}
  \end{equation}
  possesses a unique solution $v=:\Gamma[{F}]\in BC^{4+\mu,1+\mu/4}(\R\times(0,T))$ for all $F\in BC^{\mu,\mu/4}(\R\times(0,T))$. Furthermore, we have
  \begin{equation}\label{Gwest}
    \|\Gamma[{F}]\|_{BC^{4+\mu,1+\mu/4}(\R\times(0,T'))}\le C_{M,T}\|F\|_{BC^{\mu,\mu/4}(\R\times(0,T'))}
  \end{equation}
  for $T'\in(0,T]$.

  We also define a nonlinear map $\Lambda:\ X_{T,u_0}\to X_{T,u_0}$ by
  \begin{align*}
  \Lambda[u]:=u_0+\Gamma&\left[-A((u_0)_x,(u_0)_{xx})(u_0)_{xxxx}\right.\\
  &\left.+(A((u_0)_x,(u_0)_{xx})-A(u_x,u_{xx}))u_{xxxx}+B(u_x,u_{xx},u_{xxx})\right]
  \end{align*}
  for $u_0\in Y$.
  {The function} $\Lambda[u]$ is a sum of a fixed function (constructed as a solution to a linear parabolic equation) and nonlinear perturbation terms. It is \erase{also} clear that $u\in X_{T,u_0}$ is a solution to problem~\eqref{SDE'} if and only if $u=\Lambda[u]$.

  The following lemma controls the perturbation part of the map $\Lambda$.
  \begin{Lemma}\label{Lcontr}
    Let $u_0\in Y$ and assume that $\|(u_0)_x\|_{BC^{3+\mu}(\R)}\le M$. For any $R>0$, there is $\tau=\tau_{M,R}>0$ such that {if $T<\min(\tau_{M,R},1)$, then}
    \[
    \|\Lambda[u]-\Lambda[v]\|_{BC^{4+\mu,1+\mu/4}(\R\times(0,T))}\le\frac{1}{2}\|u-v\|_{BC^{4+\mu,1+\mu/4}(\R\times(0,T))}
    \]
    for all $u,v\in X_{{T,u_0}}^R$.
  \end{Lemma}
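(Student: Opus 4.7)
By the definition of $\Lambda$, the fixed piece $-A((u_0)_x,(u_0)_{xx})(u_0)_{xxxx}$ cancels in the difference $\Lambda[u]-\Lambda[v] = \Gamma[N_1+N_2]$, where
\begin{align*}
N_1 &:= [A((u_0)_x,(u_0)_{xx})-A(u_x,u_{xx})]\,u_{xxxx}\\
&\qquad- [A((u_0)_x,(u_0)_{xx})-A(v_x,v_{xx})]\,v_{xxxx},\\
N_2 &:= B(u_x,u_{xx},u_{xxx}) - B(v_x,v_{xx},v_{xxx}).
\end{align*}
Restricting to $T\le 1$, the Schauder bound \eqref{Gwest} reduces the task to proving
\[
\|N_1\|_{BC^{\mu,\mu/4}(\R\times(0,T))} + \|N_2\|_{BC^{\mu,\mu/4}(\R\times(0,T))} \le C_{M,R}T^\beta\|u-v\|_{BC^{4+\mu,1+\mu/4}(\R\times(0,T))}
\]
for some $\beta=\beta_\mu>0$. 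Throughout, the arguments $(u_0)_x,\ldots,v_{xxx}$ all have $BC^{\mu,\mu/4}$-norm bounded by $C_{M,R}$ by Remark~\ref{Holrmk}(1), so the constants produced by \eqref{dcomp} applied to $A$ or $B$ are of the form $C_{M,R}$.

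The estimate for $N_2$ is direct: \eqref{dcomp} combined with Remark~\ref{Holrmk}(1) gives $\|N_2\|_{BC^{\mu,\mu/4}}\le C_{M,R}\|u-v\|_{BC^{3+\mu,(3+\mu)/4}}$, and since $(u-v)(\cdot,0)\equiv 0$ (both $u,v$ share the initial data $u_0$), the contractivity Proposition~\ref{contraction} converts this into $C_{M,R}T^\beta\|u-v\|_{BC^{4+\mu,1+\mu/4}}$. The main obstacle is $N_1$, because the factor $u_{xxxx}$ already sits at the top of the scale $BC^{4+\mu,1+\mu/4}$ and its $BC^{\mu,\mu/4}$-norm cannot itself be shrunk with $T$. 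The remedy is the telescopic rewriting
\begin{align*}
N_1 &= [A(v_x,v_{xx})-A(u_x,u_{xx})]\,u_{xxxx}\\
&\quad+ [A((u_0)_x,(u_0)_{xx})-A(v_x,v_{xx})]\,(u_{xxxx}-v_{xxxx}),
\end{align*}
which isolates at most one top-order factor per summand.

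For the first summand, \eqref{dcomp} bounds the bracket by $C_{M,R}\|u-v\|_{BC^{2+\mu,(2+\mu)/4}}$, which contractivity promotes to $C_{M,R}T^\beta\|u-v\|_{BC^{4+\mu,1+\mu/4}}$; multiplying by $\|u_{xxxx}\|_{BC^{\mu,\mu/4}}\le C_{M,R}$ via the product estimate \eqref{prod} gives the target bound. For the second summand, contractivity applied to $v-u_0$ (which also vanishes at $t=0$) combined with \eqref{dcomp} yields $\|A((u_0)_x,(u_0)_{xx})-A(v_x,v_{xx})\|_{BC^{\mu,\mu/4}}\le C_{M,R}T^\beta$, and multiplying this by $\|u_{xxxx}-v_{xxxx}\|_{BC^{\mu,\mu/4}}\le\|u-v\|_{BC^{4+\mu,1+\mu/4}}$ via \eqref{prod} again produces a bound of the same form. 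Assembling these estimates one obtains $\|\Lambda[u]-\Lambda[v]\|_{BC^{4+\mu,1+\mu/4}}\le C_{M,R}T^\beta\|u-v\|_{BC^{4+\mu,1+\mu/4}}$, and the lemma follows by choosing $\tau_{M,R}\in(0,1)$ with $C_{M,R}\tau_{M,R}^\beta\le 1/2$. The genuine technical issue is precisely the fourth-order quasilinear term $A\cdot u_{xxxx}$; without the telescope above, a naive difference estimate would require taking $\|\,\cdot\,\|_{BC^{4+\mu,1+\mu/4}}$ of both factors simultaneously and the argument would lose the decisive $T^\beta$ factor.
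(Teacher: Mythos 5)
Your proposal is correct and follows essentially the same route as the paper: write $\Lambda[u]-\Lambda[v]=\Gamma[H]$ by linearity, bound $H$ in $BC^{\mu,\mu/4}$ using the product/composition estimates (Proposition~\ref{CompProd}) together with the contractivity of lower-order norms (Proposition~\ref{contraction}) applied to $u-v$ and to the deviation from $u_0$ (both vanishing at $t=0$), and conclude via the Schauder bound \eqref{Gwest} with $T^\beta$ small. Your telescoping of the fourth-order term, $(A(v_x,v_{xx})-A(u_x,u_{xx}))u_{xxxx}+(A((u_0)_x,(u_0)_{xx})-A(v_x,v_{xx}))(u-v)_{xxxx}$, is just the $u\leftrightarrow v$-symmetric variant of the paper's splitting $(A((u_0)_x,(u_0)_{xx})-A(u_x,u_{xx}))(u-v)_{xxxx}-(A(u_x,u_{xx})-A(v_x,v_{xx}))v_{xxxx}$, so the two arguments coincide in substance.
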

  \begin{proof}
We omit "$(\R\times(0,T))$" in notations of norms. We define $\varphi:=\Lambda[u]-\Lambda[v]$. It follows from the definition of $\Lambda$ that $\varphi\in BC^{4+\mu,1+\mu/4}(\R\times(0,T))$ is a solution to the problem
    \begin{equation}\label{phiprob}
      \varphi_t=-A((u_0)_x,(u_0)_{xx})\varphi_{xxxx}+H\quad\textrm{in}\quad \R\times(0,T),\ \varphi(\cdot,0)=0\quad\textrm{on}\quad\R,
    \end{equation}
    where
    \begin{align*}
      H&:=(A((u_0)_x,(u_0)_{xx})-A(u_x,u_{xx}))(u-v)_{xxxx}-(A(u_x,u_{xx})-A(v_x,v_{xx}))v_{xxxx}\\
      &\quad +B(u_x,u_{xx},u_{xxx})-B(v_x,v_{xx},v_{xxx}).
    \end{align*}
  Furthermore, since $A\in W^{3,\infty}_{\loc}(\R^2)$ and $B\in W^{2,\infty}_{\loc}(\R^3)$, it follows from Lemma~\ref{CompProd} and \ref{contraction} that
  \begin{align*}
  &\|(A((u_0)_x,(u_0)_{xx})-A(u_x,u_{xx}))(u-v)_{xxxx}\|_{BC^{\mu,\mu/4}}\\
  &\le C\|A((u_0)_x,(u_0)_{xx})-A(u_x,u_{xx})\|_{BC^{\mu,\mu/4}}\|(u-v)_{xxxx}\|_{BC^{\mu,\mu/4}}\\
  &\le C\|u-u_0\|_{BC^{2+\mu,(2+\mu)/4}}\|(u-v)_{xxxx}\|_{BC^{\mu,\mu/4}}\\
  &\le CRT^\beta\|(u-v)_{xxxx}\|_{BC^{\mu,\mu/4}},\\
  &\|(A(u_x,u_{xx})-A(v_x,v_{xx}))v_{xxxx}\|_{BC^{\mu,\mu/4}}\\
  &\le C\|A(u_x,u_{xx})-A(v_x,v_{xx})\|_{BC^{\mu,\mu/4}}\|v_{xxxx}\|_{BC^{\mu,\mu/4}}\\
  &\le C\|u-v\|_{BC^{2+\mu,(2+\mu)/4}}(\|(u_0)_x\|_{BC^{3+\mu}}+\|v-u_0\|_{BC^{4+\mu,1+\mu/4}})\\
  &\le C(M+R)T^\beta\|u-v\|_{BC^{4+\mu,1+\mu/4}},\\
  &\|B(u_x,u_{xx},u_{xxx})-B(v_x,v_{xx},v_{xxx})\|_{BC^{\mu,\mu/4}}\\
  &\le C\|u-v\|_{BC^{3+\mu,(3+\mu)/4}}\le CT^\beta\|u-v\|_{BC^{4+\mu,1+\mu/4}},
  \end{align*}
  provided that $T<1$, where $\beta>0$ and $C=C_{\mu,M,R}$ are constants. Hence
  \[
  \|H\|_{BC^{\mu,\mu/4}}\le CT^\beta\|u-v\|_{BC^{4+\mu,1+\mu/4}}.
  \]
  This together with \eqref{Gwest} implies that
  \begin{equation}\label{Lcontr'}
    \|\varphi\|_{BC^{4+\mu,1+\mu/4}}\le CT^\beta\|u-v\|_{BC^{4+\mu,1+\mu/4}}
  \end{equation}
  with $C=C(\mu,R)$. Thus, sufficiently small choices of $T$ yields the desired estimate.
  \end{proof}
  We are now in position to complete the proof of Theorem~\ref{locexistence}.
  \begin{proof}[Proof of Theorem~\ref{locexistence}]
    We first prove the existence. Let $u_0\in Y$ be such that $\|(u_0)_x\|_{BC^{3+\mu}}\le M$. Since
    \[
    \Lambda[u_0]-u_0=\Gamma[-A((u_0)_x,(u_0)_{xx})(u_0)_{xxxx}+B(((u_0)_x,(u_0)_{xx},(u_0)_{xxx}))],
    \]
    it follows from \eqref{CompProd} and \eqref{Gwest} that there is a constant $C_*$ depending only on $M$ such that
    \[
    \|\Lambda[u_0]-u_0\|_{BC^{4+\mu,1+\mu/4}(\R\times(0,1))}\le C_*.
    \]
    Taking $R>2C_*$ and $\tau>0$ as in Lemma~\ref{Lcontr}, we see that
    \begin{align*}
    \|\Lambda[v]-u_0\|_{BC^{4+\mu,1+\mu/4}}&\le\|\Lambda[v]-\Lambda[u_0]\|_{BC^{4+\mu,1+\mu/4}}+\|\Lambda[u_0]-u_0\|_{BC^{4+\mu,1+\mu/4}}\\
    &\le \frac{1}{2}\|v-u_0\|_{BC^{4+\mu,1+\mu/4}}+C_*<\frac{R}{2}+\frac{R}{2}=R
    \end{align*}
    for all $v\in X_{T,u_0}^R$, provided that $T<\min\{\tau,1\}$. Hence $\Lambda(X_{T,u_0}^R)\subset X_{T,u_0}^R$. This together with Lemma~\ref{Lcontr} implies that there is a unique fixed point of $\Lambda$ in $X_{T,u_0}^R$, which is a solution to problem \eqref{SDE'}, provided that $T<\min\{\tau,1\}$. Furthermore, since $\tau$ depends only on $\mu$ and $M$, the maximal existence time $T(u_0)$ is bounded from below on each subset of $Y$ bounded with respect to the seminorm $\|(\,\cdot\,)_x\|_{BC^{3+\mu}}$.

  We finally prove the uniqueness. Let $u\in X_{T,u_0}$, $\tilde{u}\in X_{\tilde{T},u_0}$, $T\le\tilde{T}$, be solutions to \eqref{SDE'}, and $R>0$ be such that $u,\tilde{u}\in X_{T,u_0}^R$. Assume on the contrary that
  \[
  T':=\sup\left\{t\in(0,T];\ u=\tilde{u}\ \textrm{on}\ \R\times(0,t)\right\}<T.
  \]
  By replacing $u_0$, $u\in X_{T,u_0}^R$ by $u(\cdot,T')$, $u(\cdot,\cdot+T')\in X_{T-T',u(\cdot,T')}^R$ respectively, we may assume that $T'=0$.
  
  Let $\tau$ be as in Lemma~\ref{Lcontr}. Then
  \[
  \|u-\tilde{u}\|_{BC^{4+\mu,1+\mu/4}(\R\times(0,\tau))}=\|\Lambda[u]-\Lambda[\tilde{u}]\|_{BC^{4+\mu,1+\mu/4}(\R\times(0,\tau))}\le \frac{1}{2}\|u-\tilde{u}\|_{BC^{4+\mu,1+\mu/4}(\R\times(0,\tau))}.
  \]
  This implies that $u=\tilde{u}$ in $\R\times(0,\tau)$, which contradicts $T'=0$.
  \end{proof}
  \section{Decay estimates for a perturbed biharmonic heat equation}
  In this section, we consider a linear perturbed biharmonic heat equation of the form
  \begin{equation}\label{PBHE}
    v_t=-((1-\alpha)v_{xx})_{xx}+F_{xx}\ \textrm{in}\ \R\times(0,T),\ v(\cdot,0)=v_0\ \textrm{in}\ \R,
  \end{equation}
  and derive decay estimates for the solution $v$. These estimates will play an essential role to deal with the higher-order perturbation of the equation \eqref{SDE'}.
  
  It is easily seen that problem \eqref{PBHE} is equivalent to the integral equation
  \begin{equation}\label{IE}
  v(t)=e^{-t\partial_x^4}v_0+\int_0^t \partial_x^2e^{-(t-s)\partial_x^4}[\alpha v_{xx}(s)]\,ds+\int_0^t \partial_x^2e^{-(t-s)\partial_x^4}F(s)\,ds,
  \end{equation}
  provided that $v\in C^{4+\mu,1+\mu/4}_{\textrm{time loc}}(\R\times(0,T))\cap L^\infty(\R\times(0,T))$. We assume on the coefficient function $\alpha$ that
  \begin{itemize}
    \item[(A)] $\|\alpha\|_{BC^{2+\mu,(2+\mu)/4}(\R\times(t/2,t))}'\le \delta$ for all $t\in(0,T)$,
  \end{itemize}where $\delta>0$ is a small constant and $\mu\in(0,1)$.

  We also define the weighted H\"{o}lder norms $\|\cdot\|_{Z^k_T}$ and the function space $Z^k_T$ as follows.
  \begin{Definition}\label{defZkt}
    For $k\in\Z_{\ge 0}$ and $T\in(0,\infty]$, the space $Z^k_T$ consists of $v\in C^{k+\mu,(k+\mu)/4}_{\textrm{time loc}}(\R\times(0,T))$ such that the value
  \[
\|v\|_{Z^k_T}:=\sup_{t\in(0,T)}\left(\|v\|_{BC^{k+\mu,(k+\mu)/4}(\R\times(t/2,t))}'+(1+t)^{1/4}\|v_x\|_{BC^{k-1+\mu,(k-1+\mu)/4}(\R\times(t/2,t))}'\right)
  \]
  is finite.  
\end{Definition}
\begin{Remark}\label{rmkZkt}
We will use this norm on the estimate of the spatial derivative $v=u_x$ of the solution $u$ to the problem~\eqref{SDE'} in Section~5.
 The first term is invariant under the scaling $v\mapsto v(\sigma x,\sigma^4t)$ for $\sigma>1$. The second term makes the spatial derivative $v_x$ of $v\in Z^k_T$ bounded near $t=0$, while it is equivalent to the first term for large $t$. Thanks to the second term, the curvature $\kappa=u_{xx}/(1+u_x^2)^{3/2}$ is bounded if $v=u_x\in Z^k_T$, which enables us to deal with the curvature nonlinearity $f(-\kappa)$. Furthermore, the asymptotic scale invariance for large $t$ makes this norm suitable for asymptotic analysis of self-similar type for problem~\eqref{SDE'}.
\end{Remark}
  Note that $Z^k_T$ is complete with respect of the norm $\|\cdot\|_{Z^k_T}$.

  \begin{Lemma}\label{123'}
    Suppose that $v_0\in W^{1,\infty}(\R)$, and that $F$ satisfies
  \begin{equation}\label{Fnorm'}
  \|F\|_{BC^{\mu,\mu/4}(\R\times(t/2,t))}'\le \varepsilon t^{-1/4}(1+t)^{-1/4}
  \end{equation}
  for all $t\in(0,T)$. Then the function
  \begin{equation}\label{Phi}
  \Phi(x,t):=e^{-t\partial_x^4}v_0+\int_0^t \partial_x^2 e^{-(t-s)\partial_x^4}F(s)\,ds
  \end{equation}
  on $\R\times(0,T)$ solves \eqref{IE} and satisfies
  \begin{equation}\label{Phinorm}
    \|\Phi\|_{Z^2_T}\le C(\|v_0\|_{W^{1,\infty}}+\varepsilon).
  \end{equation}
  \end{Lemma}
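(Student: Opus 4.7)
My plan is to prove the bound \eqref{Phinorm} by decomposing $\Phi = \Phi_1 + \Phi_2$ with $\Phi_1(t) := e^{-t\partial_x^4}v_0$ and $\Phi_2(t) := \int_0^t \partial_x^2 e^{-(t-s)\partial_x^4}F(s)\,ds$ and estimating each summand in the $Z^2_T$-norm. That $\Phi$ satisfies the requisite integral identity is a direct Duhamel verification (the $\alpha v_{xx}$ contribution is simply absent from the definition of $\Phi$), so all substantive work is in establishing \eqref{Phinorm}.

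For $\Phi_1$ I would appeal to Proposition~\ref{Smoothing} directly: the smoothing bound \eqref{smo1} with $\lambda = 2+\mu$ controls the first term in $\|\Phi_1\|_{Z^2_T}$, and \eqref{smo2} with $\lambda = 1+\mu$, combined with $v_0 \in W^{1,\infty}$, controls the weighted piece $(1+t)^{1/4}\|(\Phi_1)_x\|'$, giving $\|\Phi_1\|_{Z^2_T} \le C\|v_0\|_{W^{1,\infty}}$ essentially for free.

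The estimate for $\Phi_2$ is the heart of the argument. Fix $t \in (0,T)$ and, for $t' \in (t/2,t)$, split
\[
\Phi_2(t') = \int_0^{t/4} \partial_x^2 e^{-(t'-s)\partial_x^4}F(s)\,ds + \int_{t/4}^{t'} \partial_x^2 e^{-(t'-s)\partial_x^4}F(s)\,ds =: \Phi_2^{\flat}(t') + \Phi_2^{\sharp}(t').
\]
On $\Phi_2^{\flat}$, for every $s \in (0,t/4)$ and $t' \in (t/2,t)$ we have $t'-s \ge t/4$, so I can pull every derivative onto the kernel, use the pointwise estimate \eqref{besti}, and bound $\|F(\cdot,s)\|_\infty \le \varepsilon s^{-1/4}(1+s)^{-1/4}$. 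The key elementary computation $\int_0^{t/4} s^{-1/4}(1+s)^{-1/4}\,ds \le C t^{3/4}(1+t)^{-1/4}$, combined with the kernel's scaling, furnishes exactly the powers of $t$ that the scaled norms demand; Hölder seminorms follow either by interpolation or by one further kernel differentiation, and the contribution of $\Phi_2^{\flat}$ to $\|\Phi_2\|_{Z^2_T}$ is bounded by $C\varepsilon$. For $\Phi_2^{\sharp} = \partial_x^2 \tilde w$, where $\tilde w$ solves $\tilde w_t + \partial_x^4 \tilde w = F$ on $\R\times(t/4,t)$ with zero initial datum at $t/4$, I invoke Proposition~\ref{Schauder'} on $(t/4,t)$. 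The parabolicity parameter $\nu$ is $1$ independently of $t$ because the operator is constant-coefficient. To control $\|F\|_{BC^{\mu,\mu/4}(\R\times(t/4,t))}'$, I decompose $(t/4,t) = (t/4,t/2) \cup (t/2,t)$ and use subadditivity of the parabolic Hölder seminorm together with \eqref{Fnorm'} on each dyadic piece, obtaining $\|F\|_{BC^{\mu,\mu/4}(\R\times(t/4,t))}' \le C\varepsilon t^{-1/4}(1+t)^{-1/4}$. Proposition~\ref{Schauder'} then yields $\|\tilde w\|_{BC^{4+\mu,1+\mu/4}(\R\times(t/4,t))}' \le C\varepsilon t^{3/4}(1+t)^{-1/4}$, and trading two spatial derivatives against a factor of $t^{-1/2}$ in the scaled norm (cf.\ Remark~\ref{Holrmk}~(1)) gives $\|\Phi_2^{\sharp}\|_{BC^{2+\mu,(2+\mu)/4}(\R\times(t/2,t))}' \le C\varepsilon t^{1/4}(1+t)^{-1/4} \le C\varepsilon$ uniformly in $t$. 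The extra $(1+t)^{1/4}$-weighted bound on $\Phi_{2,x}$ is obtained by trading one further spatial derivative in exactly the same way.

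The principal obstacle I anticipate is the criticality of the weight $s^{-1/4}(1+s)^{-1/4}$ on $F$: it sits just at the borderline of integrability, so every $t$-power produced by differentiation or by the Schauder scaling must be tracked with precision in order for the final bound to be uniform in $t$. In particular, the uniformity of the constant in Proposition~\ref{Schauder'} with respect to the length of the time interval — available because our linearization is the pure biharmonic operator — is essential; without it, the estimate on the near piece would degenerate as $t \to \infty$ and the global-in-time bound would break.
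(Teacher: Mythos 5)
Your proposal is correct and follows essentially the same route as the paper: the paper likewise writes $\Phi$ as the semigroup term plus the Duhamel integral split at a time $\tilde t$ comparable to $t/4$, handles the initial-data term via the smoothing estimates \eqref{smo1}--\eqref{smo2}, the far-in-time piece via the kernel bound \eqref{besti} together with the integral $\int_0^{t/2}s^{-1/4}(1+s)^{-1/4}\,ds\le Ct^{3/4}(1+t)^{-1/4}$ and interpolation for the H\"older seminorms, and the near-in-time piece by writing it as $\partial_x^2$ of a zero-initial-data solution and applying Proposition~\ref{Schauder'} followed by trading spatial derivatives for powers of $t$ in the scaled norm. Your explicit dyadic covering argument for $\|F\|_{BC^{\mu,\mu/4}(\R\times(t/4,t))}'$ is a detail the paper leaves implicit, but it is the right justification and does not change the argument.
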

  \begin{proof}
  We decompose $\Phi$ into $I+\II+\III$, where
        \[
  I:=e^{-t\partial_x^4}v_0,\
  \II:=\int_0^{\tilde{t}} \partial_x^2e^{-(t-s)\partial_x^4}F(s)\,ds,\ \III:=\int_{\tilde{t}}^t \partial_x^2e^{-(t-s)\partial_x^4}F(s)\,ds.
  \]
  
      We fix $\tilde{t}\in(0,T/4)$ and consider the range $t\in(2\tilde{t},4\tilde{t})$. We first observe that
  \begin{equation}\label{I}
    \|I\|_{BC^{\lambda,\lambda/4}(\R\times(a,b))}'\le C\|v_0\|_{L^\infty},\ \|I_x\|_{BC^{\lambda,\lambda/4}(\R\times(t/2,t))}'\le C(1+t)^{-1/4}\|v_0\|_{W^{1,\infty}},
  \end{equation}
  for all $\lambda\ge 0$.  

  We next estimate $\II$.
  By
  \[
\partial_x^k\partial_t^\ell\II=\int_0^{\tilde{t}}\partial_x^{k+4\ell+2}e^{-(t-s)\partial_x^4}F(s)\,ds
  \]
  we see that
    \begin{align*}
      \|\partial_x^k\partial_t^\ell\II\|_{L^\infty}(t)&\le C\int_0^{\tilde{t}}(t-s)^{-(k+4\ell+2)/4}\|F\|_{L^\infty}(s)\,ds\\
      &\le Ct^{-(k+4\ell+2)/4}\int_0^{t/2}\varepsilon s^{-1/4}(1+s)^{-1/4}\,ds\\
      &\le C\varepsilon t^{-(k+4\ell+2)/4}\cdot t^{3/4}(1+t)^{-1/4}= C\varepsilon t^{(1-k-4\ell)/4}(1+t)^{-1/4}
    \end{align*}
  for all $k,\ell\in\Z_{\ge 0}$. By interpolation, we also obtain
    \[
    [\partial_x^k\partial_t^\ell\II]_{C^{\mu,\mu/4}(\R\times(t/2,t))}\le C\varepsilon t^{(1-k-4\ell-\mu)/4}(1+t)^{-1/4}
    \]
  for all $\mu\in(0,1)$. Thus it holds that
  \begin{equation}\label{II-2}
  \begin{gathered}
      \|\II\|_{BC^{\lambda,\lambda/4}(\R\times(t/2,t))}'\le C\varepsilon t^{1/4}(1+t)^{-1/4},\\
      \|\II_x\|_{BC^{\lambda-1,(\lambda-1)/4}(\R\times(t/2,t))}'\le C\varepsilon (1+t)^{-1/4},
  \end{gathered}
  \end{equation}
  for all $\lambda\ge 0$.

  We finally estimate $C^{2+\mu,(2+\mu)/4}$ norms of derivatives of $\III$. We set
  \[
  \widetilde{\III}:=\int_{\tilde{t}}^t e^{-(t-s)\partial_x^4}F(s)\,ds,
  \]
  so that $\partial_x^2\widetilde{\III}=\III$. 
  Then Proposition~\ref{Schauder'} implies that
  \[
  \|\widetilde{\III}\|_{BC^{4+\mu,1+\mu/4}(\R\times(\tilde{t},t))}'\le Ct\|F\|_{BC^{\mu,\mu/4}(\R\times(\tilde{t},t))}'\le C\varepsilon t^{3/4}(1+t)^{-1/4}.
  \]
  Combining this with $\III=\partial_x^2\widetilde{\III}$, we deduce that
  \begin{equation}\label{III-2'}
  \begin{gathered}
  \|\III\|_{BC^{2+\mu,(2+\mu)/4}(\R\times(\tilde{t},t))}'\le C\varepsilon t^{1/4}(1+t)^{-1/4}\le C\varepsilon,\\
  \|\III_x\|_{BC^{1+\mu,(1+\mu)/4}(\R\times(\tilde{t},t))}'\le 
  t^{-1/4}\|\III\|_{BC^{2+\mu,(2+\mu)/4}(\R\times(\tilde{t},t))}'\le C\varepsilon(1+t)^{-1/4}.
  \end{gathered}
  \end{equation}
  Adding \eqref{I}, \eqref{II-2}, and \eqref{III-2'}, we obtain
  \begin{gather*}
  \|\Phi\|_{BC^{2+\mu,(2+\mu)/4}(\R\times(t/2,t))}'\le C(\|u_0\|_{W^{1,\infty}}+\varepsilon),\\
  \|\Phi_x\|_{BC^{1+\mu,(1+\mu)/4}(\R\times(t/2,t))}'\le C(\|u_0\|_{W^{1,\infty}}+\varepsilon)(1+t)^{-1/4},
  \end{gather*}
  and complete the proof of Lemma~\ref{123'}.
  \end{proof}
               
  \begin{Lemma}\label{123}
    Suppose that $v_0\in W^{1,\infty}(\R)$, and that $F$ satisfies
      \begin{equation}\label{Fnorm}
  \|F\|_{BC^{2+\mu,(2+\mu)/4}(\R\times(t/2,t))}'\le \varepsilon t^{-1/4}(1+t)^{-1/4}\erase{,}
  \end{equation}
  for all $t\in(0,T)$. Then the function $\Phi$ defined in \eqref{Phi} satisfies
  \begin{equation}\label{Phinorm'}
    \|\Phi\|_{Z^4_T}\le C(\|v_0\|_{W^{1,\infty}}+\varepsilon).
  \end{equation}
  \end{Lemma}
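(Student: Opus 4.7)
The plan is to adapt the three-part decomposition $\Phi = I + II + III$ from the proof of Lemma~\ref{123'} to the higher-order setting of $Z^4_T$, using the stronger regularity on $F$ to control the extra two derivatives. As before, I fix $\tilde t \in (0, T/4)$, work on the range $t \in (2\tilde t, 4\tilde t)$, and set
$$I := e^{-t\partial_x^4}v_0,\qquad II := \int_0^{\tilde t}\partial_x^2 e^{-(t-s)\partial_x^4}F(s)\,ds,\qquad III := \int_{\tilde t}^t \partial_x^2 e^{-(t-s)\partial_x^4}F(s)\,ds.$$

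The contributions of $I$ and $II$ require only cosmetic changes from Lemma~\ref{123'}. For $I$, Proposition~\ref{Smoothing} applied with $\lambda = 4+\mu$, respectively with $\lambda = 3+\mu$ for $I_x = e^{-t\partial_x^4}(v_0)_x$, directly gives $\|I\|_{BC^{4+\mu,(4+\mu)/4}(\R\times(t/2,t))}' \le C\|v_0\|_\infty$ and $(1+t)^{1/4}\|I_x\|_{BC^{3+\mu,(3+\mu)/4}(\R\times(t/2,t))}' \le C\|v_0\|_{W^{1,\infty}}$. For $II$, the pointwise kernel estimates \eqref{besti} give, for every $k,m \in \Z_{\ge 0}$,
$$\|\partial_x^k \partial_t^m II\|_{L^\infty}(t) \le C\varepsilon\, t^{(1-k-4m)/4}(1+t)^{-1/4},$$
using only the $L^\infty$ bound on $F$ provided by \eqref{Fnorm}; interpolation in the spatial and temporal variables then produces $\|II\|_{BC^{4+\mu,(4+\mu)/4}(\R\times(t/2,t))}' \le C\varepsilon\, t^{1/4}(1+t)^{-1/4}$ together with the companion bound $(1+t)^{1/4}\|II_x\|_{BC^{3+\mu,(3+\mu)/4}(\R\times(t/2,t))}' \le C\varepsilon$, in exact parallel with the $Z^2_T$ argument in Lemma~\ref{123'}.

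The genuinely new step is the estimate of $III$, which is where the stronger hypothesis \eqref{Fnorm} enters. Because $F \in BC^{2+\mu,(2+\mu)/4}$, I will commute $\partial_x^2$ with the biharmonic semigroup and rewrite
$$III = \int_{\tilde t}^t e^{-(t-s)\partial_x^4} F_{xx}(s)\,ds,$$
so that $III$ solves $III_t = -\partial_x^4 III + F_{xx}$ on $(\tilde t, t)$ with $III(\tilde t) \equiv 0$. After shifting time to $(0, t-\tilde t)$, the scaled Schauder estimate (Proposition~\ref{Schauder'}, with coefficients $a_4 \equiv -1$, $a_i \equiv 0$ for $i<4$) yields
$$\|III\|_{BC^{4+\mu,1+\mu/4}(\R\times(\tilde t,t))}' \le C(t-\tilde t)\, \|F_{xx}\|_{BC^{\mu,\mu/4}(\R\times(\tilde t,t))}'.$$
Applying Remark~\ref{Holrmk}(1) to absorb the two lost spatial derivatives costs a factor $(t-\tilde t)^{-1/2}$, and a routine partition of $(\tilde t, t)$ into two overlapping subintervals of the form $(t'/2, t')$ turns \eqref{Fnorm} into $\|F\|_{BC^{2+\mu,(2+\mu)/4}(\R\times(\tilde t, t))}' \le C\varepsilon\, t^{-1/4}(1+t)^{-1/4}$. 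Since $t - \tilde t \sim t$, these three factors multiply to $\|III\|_{BC^{4+\mu,1+\mu/4}(\R\times(\tilde t,t))}' \le C\varepsilon\, t^{1/4}(1+t)^{-1/4}$, and restriction to $(t/2,t) \subset (\tilde t, t)$ together with Remark~\ref{Holrmk}(1) for $III_x$ produces the required $(1+t)^{1/4}$-weighted bound.

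The main technical hurdle is ensuring that the passage $F \mapsto F_{xx}$ interacts cleanly with the scaled-Schauder estimate and the hypothesis \eqref{Fnorm}; this is precisely why the upgrade from Lemma~\ref{123'} to Lemma~\ref{123} is paid for by demanding $F \in BC^{2+\mu,(2+\mu)/4}$ rather than merely $BC^{\mu,\mu/4}$. Once all pieces are in place, summing the $I$, $II$, $III$ contributions and letting $\tilde t$ sweep $(0, T/4)$ yields \eqref{Phinorm'}.
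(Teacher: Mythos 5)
Your proof is correct and follows essentially the same route as the paper: it reuses the decomposition $\Phi=I+\II+\III$ and the estimates \eqref{I}, \eqref{II-2} (valid for all $\lambda$, hence with $\lambda=4+\mu$), and treats $\III$ by applying the scaled Schauder estimate (Proposition~\ref{Schauder'}) to $\III_t=-\partial_x^4\III+F_{xx}$ with zero data at $\tilde t$, paying a factor $t^{-1/2}$ to pass from $F$ to $F_{xx}$ via \eqref{Fnorm}, exactly as in \eqref{III-2}. The extra details you supply (the partition of $(\tilde t,t)$ and the $\III_x$ bound via Remark~\ref{Holrmk}) are points the paper leaves implicit, but they do not change the argument.
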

  \begin{proof}
  Proposition~\ref{Schauder'} {yields} the estimate
    \begin{equation}\label{III-2}
    \|\III\|_{BC^{4+\mu,1+\mu/4}(\R\times(\tilde{t},t))}'\le Ct\|F_{xx}\|_{BC^{\mu,\mu/4}(\R\times(\tilde{t},t))}'\le Ct^{1/4}(1+t)^{-1/4}.
    \end{equation}
    Adding \eqref{I}, \eqref{II-2} with $\lambda=4+\mu$ and \eqref{III-2}, we obtain \eqref{Phinorm'}. This completes the proof of Lemma~\ref{123}.
    \end{proof}
  Now we are ready to state our linear decay estimates for the equation \eqref{PBHE}.
\begin{Theorem}\label{linearestimate}
  Suppose that $v_0\in W^{1,\infty}(\R)$. Assume (A) and \eqref{Fnorm}. Then there are constants $\delta_0>0$ and $C_0>0$ independent of $T$, such that if $\delta<\delta_0$, then there is a unique solution to $v\in Z^4_T$ be a solution to problem \eqref{PBHE},
  and $v$ satisfies
  \begin{equation}\label{vest}
  \|v\|_{Z^4_T}\le C(\|v_0\|_{W^{1,\infty}}+\varepsilon).
  \end{equation}
  Furthermore, if $v\in Z^2_T$ is a solution to problem \eqref{IE}, then $v$ belongs to $Z^4_T$ and solves problem \eqref{PBHE}.
\end{Theorem}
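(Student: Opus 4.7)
The plan is to reformulate \eqref{PBHE} as the equivalent integral equation \eqref{IE} (equivalent on $Z^4_T\subset C^{4+\mu,1+\mu/4}_{\textrm{time loc}}\cap L^\infty$) and run a Banach fixed-point argument in $Z^4_T$ for the operator
\[
\Psi(v)(t) := e^{-t\partial_x^4}v_0 + \int_0^t \partial_x^2 e^{-(t-s)\partial_x^4}\bigl[\alpha(s)v_{xx}(s)+F(s)\bigr]\,ds,
\]
whose fixed points are precisely the $Z^4_T$-solutions of \eqref{IE}. Lemma~\ref{123} does all the work on the linear side, provided the effective forcing $\alpha v_{xx}+F$ satisfies the decay bound \eqref{Fnorm}; only the $\alpha v_{xx}$ piece needs a new estimate.

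The key nonlinear estimate is
\[
\|\alpha v_{xx}\|_{BC^{2+\mu,(2+\mu)/4}(\R\times(t/2,t))}' \le C\delta\,t^{-1/4}(1+t)^{-1/4}\,\|v\|_{Z^4_T}.
\]
Proposition~\ref{CompProd}~(i) with assumption (A) yields $C\delta\,\|v_{xx}\|_{BC^{2+\mu,(2+\mu)/4}(\R\times(t/2,t))}'$, and then Remark~\ref{Holrmk}~(1) applied to $v_x$ (losing one scaled spatial derivative, factor $t^{-1/4}$) followed by the $(1+t)^{1/4}$-weighted piece of the $Z^4_T$-norm produces exactly the two factors $t^{-1/4}$ and $(1+t)^{-1/4}$. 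Feeding $\alpha v_{xx}+F$ into Lemma~\ref{123} gives
\[
\|\Psi(v)\|_{Z^4_T} \le C_1(\|v_0\|_{W^{1,\infty}}+\varepsilon) + C_2\delta\,\|v\|_{Z^4_T},
\]
with $C_1,C_2$ independent of $T$; the same calculation applied to $\Psi(v_1)-\Psi(v_2)$ (where the $v_0$ and $F$ contributions cancel) gives $\|\Psi(v_1)-\Psi(v_2)\|_{Z^4_T}\le C_2\delta\,\|v_1-v_2\|_{Z^4_T}$. Choosing $\delta_0$ so that $C_2\delta_0\le 1/2$ makes $\Psi$ a contraction on all of $Z^4_T$; Banach's theorem produces a unique fixed point satisfying \eqref{vest}, and the equivalence of \eqref{IE} with \eqref{PBHE} on $Z^4_T$ upgrades this to existence and uniqueness for the PDE.

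For the last assertion I rerun the identical scheme in the larger space $Z^2_T$, replacing Lemma~\ref{123} by Lemma~\ref{123'}. The input estimate becomes
\[
\|\alpha v_{xx}\|_{BC^{\mu,\mu/4}(\R\times(t/2,t))}' \le C\delta\,t^{-1/4}(1+t)^{-1/4}\|v\|_{Z^2_T},
\]
obtained the same way after using Remark~\ref{Holrmk}~(2) to drop the H\"older order on $\alpha$ and invoking the $Z^2_T$-weight on $v_x$. Shrinking $\delta_0$ if necessary yields a $Z^2_T$-contraction and hence uniqueness of $Z^2_T$-solutions of \eqref{IE}. Since the $Z^4_T$-solution from the first step lies automatically in $Z^2_T$ and satisfies $\Psi(v)=v$, it coincides with any $Z^2_T$-solution of \eqref{IE}; therefore every such solution lies in $Z^4_T$ and solves \eqref{PBHE}.

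The only delicate point is the scale matching in the second paragraph: both factors of the decay rate $t^{-1/4}(1+t)^{-1/4}$ demanded by Lemmas~\ref{123} and~\ref{123'} must come out exactly, which is what dictates the weighted form of $\|\cdot\|_{Z^k_T}$. Any larger decay in the $v_x$-weight would leave an uncontrolled singularity near $t=0$, while any smaller one would fail to be absorbed by the smallness $\delta$ in (A), destroying $T$-uniformity; once this is arranged, everything else is a routine assembly of Proposition~\ref{CompProd}, Remark~\ref{Holrmk}, and the linear estimates of Section~4.
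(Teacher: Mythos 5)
Your proposal is correct and follows essentially the same route as the paper: the paper also reduces to the integral equation, proves the same key bound $\|\alpha v_{xx}\|'\le C\delta\,t^{-1/4}(1+t)^{-1/4}\|v\|_{Z^k_T}$ for $k\in\{2,4\}$ via Proposition~\ref{CompProd} and the $Z^k_T$-weights, feeds it into Lemmas~\ref{123'} and~\ref{123}, and then inverts $\operatorname{id}-L$ on $Z^4_T$ and $Z^2_T$ for $\delta$ small — which is just the Neumann-series form of your affine contraction argument, including the same identification of any $Z^2_T$-solution of \eqref{IE} with the $Z^4_T$-solution.
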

\begin{proof}
  We define a linear operator $L$ on $Z^2_T$ by
  \[
  L[v](t):=\int_0^t e^{-(t-s)\partial_x^4}(\alpha v_{xx})(s)\,ds.
  \]

  The inequality \eqref{Phinorm'} implies that $\Phi\in Z^4_T$ and $\|\Phi\|_{Z^4_T}\le C(\|v_0\|_{W^{1,\infty}}+\varepsilon)$. Furthermore, we have
  \begin{equation}\label{avxx}
  \begin{aligned}
    &\|\alpha v_{xx}\|_{BC^{k-1+\mu,(k-1+\mu)/4}(\R\times(t/2,t))}'\\
    &\le C\|\alpha\|_{BC^{k-1+\mu,(k-1+\mu)/4}(\R\times(t/2,t))}'\cdot t^{-1/4}\|v_x\|_{BC^{k-1+\mu,(k-1+\mu)/4}(\R\times(t/2,t))}'\\
    &\le C\delta\|v\|_{Z^k_T}t^{-1/4}(1+t)^{-1/4}
  \end{aligned}
  \end{equation}
 for $k\in\{2,4\}$. This together with Lemmas~\ref{123'}~and~\ref{123} with $v_0=0$ and $F=\alpha v_{xx}$ implies that
  \[
  \|L[v]\|_{Z^2_T}\le C\delta\|v\|_{Z^2_T},\ \|L[v]\|_{Z^4_T}\le C\delta\|v\|_{Z^4_T}.
  \]
  
  In particular, if $\delta>0$ is sufficiently small, $\operatorname{id}-L$ is an automorphism on $Z^k_T$ for $k\in\{2,4\}$. Since \eqref{PBHE} is equivalent to $v-L v=\Phi$ and $\|\Phi\|_{Z^4_T}\le C(\|v_0\|_{W^{1,\infty}}+\varepsilon)$, we deduce the unique existence of a solution to problem \eqref{PBHE} in $Z^4_T$ and the estimate \eqref{vest}. Furthermore, if $\overline{v}\in Z^2_T$ is a solution to problem \eqref{IE}, $\overline{v}$ coincides with the solution $v\in Z^4_T$ to problem \eqref{PBHE} since $\operatorname{id}-L$ is an automorphism on $Z^2_T$. The proof of Theorem~\ref{linearestimate} is complete.
\end{proof}

We finally mention the following higher regularity of solutions to problem \eqref{PBHE}.

\begin{Lemma}\label{higherreg}
  Let $k\in\Z_{\ge 3}$. Let $\delta_0$ be as in Theorem~\ref{linearestimate} and assume (A) with $\delta<\delta_0$. Assume also that
  \begin{equation}\label{A2} 
    \sup_{t\in(0,T)}\|\alpha\|_{BC^{k+\mu,(k+\mu)/4}(\R\times(t/2,t))}'<\infty.
  \end{equation}
  Suppose that $v_0\in W^{1,\infty}(\R)$, and that $F$ satisfies
  \begin{equation}\label{highFnorm}
    \|F\|_{BC^{k+\mu,(k+\mu)/4}(\R\times(t/2,t))}'\le \varepsilon t^{-1/4}(1+t)^{-1/4}
  \end{equation}
  for all $t\in(0,T)$. Let $v\in Z^4_T$ be a solution to equation $v_t=-((1-\alpha)v_{xx})_{xx}+F_{xx}$. Then
  \begin{equation}\label{highervest}
    \|v\|_{Z^{k+2}_T}\le C(\|v_0\|_{L^\infty(\R)}+\varepsilon).
  \end{equation}
\end{Lemma}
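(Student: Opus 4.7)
The strategy is to regard $v_t = -((1-\alpha)v_{xx})_{xx} + F_{xx}$ as a linear fourth-order parabolic equation with variable coefficients depending on $\alpha$, and to apply the scaled local parabolic Schauder estimate (Proposition~\ref{locSchauder'}) directly in order to gain regularity beyond the $Z^4_T$ level already guaranteed by Theorem~\ref{linearestimate}. The key observation is that a naive bootstrap of the integral equation $v = \Phi + L[v]$ would stall, because $\alpha v_{xx}$ gains only as much regularity as $v_{xx}$; however, by absorbing the third- and fourth-order $\alpha$-derivatives into the coefficients of a linear Schauder problem rather than into the source, this coupling problem is sidestepped.

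\textbf{Estimate on $v$.} I would first expand the equation as
\begin{equation*}
    v_t = -(1-\alpha)\,v_{xxxx} + 2\alpha_x\,v_{xxx} + \alpha_{xx}\,v_{xx} + F_{xx}.
\end{equation*}
The equation is uniformly parabolic by assumption (A) with $\delta < \delta_0 < 1$. Its coefficients $-(1-\alpha), 2\alpha_x, \alpha_{xx}$ lie in the scaled H\"older class at level $k-2+\mu$ on cylinders $\mathbb{R}\times(t/4,t)$ with norms controlled by \eqref{A2}, while the scaled $BC^{k-2+\mu,(k-2+\mu)/4}$-norm of $F_{xx}$ is bounded by $Ct^{-1/2}\cdot\varepsilon t^{-1/4}(1+t)^{-1/4}$, using \eqref{highFnorm} together with Remark~\ref{Holrmk}(1). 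Applying a time-translated form of Proposition~\ref{locSchauder'} with $\lambda=k-2+\mu$ on the dilated cylinder $(t/4,t)$, combined with the uniform $L^\infty$-bound on $v$ coming from $v\in Z^4_T$, produces the first half of the $Z^{k+2}_T$-bound with constant $C(\|v_0\|_{W^{1,\infty}}+\varepsilon)$.

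\textbf{Estimate on $v_x$.} For the weighted derivative piece of $\|v\|_{Z^{k+2}_T}$, I would differentiate the expanded equation in $x$ to get an equation for $w:=v_x$,
\begin{equation*}
    w_t = -(1-\alpha)\,w_{xxxx} + 3\alpha_x\,w_{xxx} + 3\alpha_{xx}\,w_{xx} + \alpha_{xxx}\,w_x + F_{xxx},
\end{equation*}
with initial datum $w(\cdot,0)=(v_0)_x\in L^\infty(\mathbb{R})$. This is the step where the full strength of \eqref{A2} is used, since the new coefficient $\alpha_{xxx}$ requires $\alpha$ at level $k+\mu$. I would apply Proposition~\ref{locSchauder'} with $\lambda=k-3+\mu$, which is positive exactly because $k\ge 3$. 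Using the $Z^4_T$-bound in the form $\|w\|_{L^\infty(\mathbb{R}\times(t/4,t))}\le C(1+t)^{-1/4}(\|v_0\|_{W^{1,\infty}}+\varepsilon)$ and the source bound $t\,\|F_{xxx}\|_{BC^{k-3+\mu,(k-3+\mu)/4}(\mathbb{R}\times(t/4,t))}'\le C\varepsilon(1+t)^{-1/4}$, the Schauder estimate delivers $(1+t)^{1/4}\|v_x\|_{BC^{k+1+\mu,(k+1+\mu)/4}(\mathbb{R}\times(t/2,t))}'\le C(\|v_0\|_{W^{1,\infty}}+\varepsilon)$, completing the $Z^{k+2}_T$-bound.

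\textbf{Main obstacle.} The hardest technical step is coordinating the scaled H\"older norms over the various time intervals. The assumptions \eqref{A2} and \eqref{highFnorm} only supply uniform bounds on slices $\mathbb{R}\times(s/2,s)$, whereas the Schauder estimate naturally wants norms on the larger cylinder $\mathbb{R}\times(t/4,t)$; I would cover $(t/4,t)=(t/4,t/2)\cup(t/2,t)$ by two such slices and control the H\"older seminorm across the interior time level $t/2$ using the already established $C^{k+\mu,(k+\mu)/4}_{\mathrm{time\,loc}}$ regularity. All the while, one must preserve the two decay factors $t^{-1/4}$ and $(1+t)^{-1/4}$ so that the constants on the right-hand side remain of the form $C(\|v_0\|_{W^{1,\infty}}+\varepsilon)$ rather than degenerating as $t\to 0$ or $t\to\infty$.
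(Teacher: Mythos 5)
Your proposal is correct and follows essentially the same route as the paper: expand the equation into non-divergence form, apply the scaled local Schauder estimate (Proposition~\ref{locSchauder'}) with the $L^\infty$-bound from $v\in Z^4_T$, and then differentiate once in $x$ to estimate $v_x$ via the same proposition, which is exactly where \eqref{A2} at level $k+\mu$ and the hypothesis $k\ge 3$ enter. The only difference is that you spell out the dyadic-slice bookkeeping for passing from $(s/2,s)$ to $(t/4,t)$, which the paper leaves implicit.
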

\begin{proof}
  We observe that the equation $v_t=-((1-\alpha)v_{xx})_{xx}$ is equivalent to
  \[
  v_t=-(1-\alpha)v_{xxxx}+2\alpha_xv_{xxx}+\alpha_{xx}v_{xx}+F_{xx}.
  \]
  It follows from \eqref{highFnorm} that
    \[
    \|F_{xx}\|_{BC^{k-1+\mu,(k-1+\mu)/4}(\R\times(t/4,t))}'\le C\varepsilon t^{-3/4}(1+t)^{-1/4}.
  \]
  Furthermore, the assumption (A2) implies that
  \begin{align*}
    \|1-\alpha\|_{BC^{k-1+\mu,(k-1+\mu)/4}(\R\times(t/4,t))}'&\le C,\\
    \|\alpha_x\|_{BC^{k-1+\mu,(k-1+\mu)/4}(\R\times(t/4,t))}'&\le C,\\
    \|\alpha_{xx}\|_{BC^{k-1+\mu,(k-1+\mu)/4}(\R\times(t/4,t))}'&\le C,
  \end{align*}
  By Proposition~\ref{locSchauder'}, we see that
  \begin{align*}
  \|v\|_{BC^{k+2+\mu,(k+2+\mu)/4}(\R\times(t/2,t))}'&\le C\left(t\cdot\varepsilon t^{-3/4}(1+t)^{-1/4}+\|v\|_{L^\infty(\R\times(t/4,t))}\right)\le C(\|v_0\|_{W^{1,\infty}}+\varepsilon).
  \end{align*}
  For the estimate of $\|v_x\|_{BC^{k+1+\mu,(k+1+\mu)/4}}'$, we consider an equation which the function $w=v_x$ solves, namely
  \[
  w_t=-(1-\alpha)w_{xxxx}+3\alpha_xw_{xxx}+3\alpha_{xx}w_{xx}+\alpha_{xxx}w_x+F_{xxx}.
  \]
  Since
  \[
   \|F_{xxx}\|_{BC^{k-3+\mu,(k-3+\mu)/4}(\R\times(t/4,t))}'\le C\varepsilon t^{-1}(1+t)^{-1/4},
  \]
   Proposition~\ref{locSchauder'} and \eqref{highFnorm} yield
  \begin{align*}
     \|v_x\|_{BC^{k+1+\mu,(k+1+\mu)/4}(\R\times(t/4,t))}'&\le C\left(t\cdot\varepsilon t^{-1}(1+t)^{-1/4}+\|v_x\|_{L^\infty(\R\times(t/2,t))}\right)\\
     &\le C(\|v_0\|_{W^{1,\infty}}+\varepsilon)(1+t)^{-1/4}.
  \end{align*}
  Thus \eqref{highervest} follows.
\end{proof}

\section{Nonlinear decay estimates for a general surface diffusion flow}
  We consider decay estimates for a solution $u\in X_{T,u_0}$ to problem \eqref{SDE'} via linear estimates in Section~4 and elementary estimates for perturbation terms. These enable us to extend $u$ to a global-in-time solution. We assume that \eqref{fassump} and \eqref{u0assmup} throughout the section.
  
  Instead of directly estimating $u$, we estimate the spatial derivative $v=u_x$ of $u$. We define functions $v_0$, $\alpha$, and $F$ by $v_0=(u_0)_x$ and
  \begin{equation}\label{alpha}
  \alpha:=1-\frac{1}{(1+v^2)^2}f'\left(-\frac{v_x}{(1+v^2)^{3/2}}\right),
  \end{equation}
  \begin{equation}\label{F}
  F=\frac{3vv_x^2}{(1+v^2)^3}f'\left(-\frac{v_x}{(1+v^2)^{3/2}}\right),
  \end{equation}
  so that $v$ solves the integral equation
  \begin{equation}\label{SDE''}
  v(t)=e^{-t\partial_x^4}v_0+\int_0^t\partial_x^2 e^{-(t-s)\partial_x^4}[\alpha v_{xx}(s)]\,ds+\int_0^t \partial_x^2e^{-(t-s)\partial_x^4}F(s)\,ds.
  \end{equation}
  We first estimate each nonlinear component of the equation \eqref{SDE''} by $\|v\|_{Z^k_T}$ and verify that we are in a situation that Theorem~\ref{linearestimate} applies.
  \begin{Remark}\label{rmkZkt2}
      Thanks to the adoption of the scaled H\"{o}lder norms $\|\cdot\|_{BC^{k+\mu,(k+\mu)/4}(\R\times(t/2,t))}'$ and weighted H\"{o}lder norms $\|\cdot\|_{Z^k_T}$, we can obtain adequate estimates of derivatives of $v$ without expanding them, which makes our calculations much simpler than using the usual H\"{o}lder norm $\|\cdot\|_{BC^{k+\mu,(k+\mu)/4}(\R\times(t/2,t))}$ (See Remark~\ref{Holrmk} and Proposition~\ref{CompProd})
  \end{Remark}
  \begin{Lemma}\label{kappa}
    Suppose that $\mu\in(0,1)$, $R>0$, and $\|v\|_{Z^k_T}\le R$.
  Define
  \[
  \kappa:=\frac{v_x}{(1+v^2)^{3/2}}.
  \]
  Then
  \begin{equation}\label{kappaest}
  \|\kappa\|_{BC^{k-1+\mu,(k-1+\mu)/4}(\R\times(t/2,t))}'\le C\|v\|_{Z^k_T} (1+t)^{-1/4},
  \end{equation}
  \begin{equation}\label{fkappaest}
    \|f'(-\kappa)-1\|_{BC^{k-1+\mu,(k-1+\mu)/4}(\R\times(t/2,t))}'\le C\|v\|_{Z^k_T}(1+t)^{-1/4},
  \end{equation}
  for all $t\in(0,T)$.
  \end{Lemma}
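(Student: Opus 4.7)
The plan is to read off both bounds from the product and composition rules for scaled H\"older norms (Proposition~\ref{CompProd}), together with the defining inequality $\|v\|_{Z^k_T}\le R$ for the weighted norm. No new estimate on $v$ is needed; everything is pointwise algebra on the composite expressions, with scale-invariance of the primed norm keeping the constants clean.

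For \eqref{kappaest}, factor $\kappa=v_x\cdot g(v)$ with $g(r):=(1+r^2)^{-3/2}$. The product rule \eqref{prod} for scaled norms on the slab $\R\times(t/2,t)$ gives
\[
\|\kappa\|_{BC^{k-1+\mu,(k-1+\mu)/4}(\R\times(t/2,t))}'\le C\,\|v_x\|_{BC^{k-1+\mu,(k-1+\mu)/4}(\R\times(t/2,t))}'\cdot\|g(v)\|_{BC^{k-1+\mu,(k-1+\mu)/4}(\R\times(t/2,t))}'.
\]
The first factor is bounded by $(1+t)^{-1/4}\|v\|_{Z^k_T}$ directly from the definition of $Z^k_T$. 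For the second, Remark~\ref{Holrmk}(ii) combined with the definition of $Z^k_T$ gives $\|v\|_{BC^{k-1+\mu,(k-1+\mu)/4}(\R\times(t/2,t))}'\le\|v\|_{Z^k_T}\le R$, so the composition rule \eqref{comp} applied with $A=g\in C^\infty(\R)$ yields $\|g(v)\|'\le|g(0)|+C_R\|v\|'\le C_R$. Multiplying these factors produces \eqref{kappaest}.

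For \eqref{fkappaest}, the trick is to use $f'(0)=1$ from \eqref{fassump}: let $h(s):=f'(-s)-1$, so that $h(0)=0$ and $h$ has the same $W^{\ast,\infty}_{\loc}$ regularity as $f'$. Since $\|\kappa\|'$ is uniformly bounded on each slab by \eqref{kappaest} (with constant depending on $R$ only), the composition estimate \eqref{comp} applied to $h$ at $\kappa$ gives
\[
\|f'(-\kappa)-1\|_{BC^{k-1+\mu,(k-1+\mu)/4}(\R\times(t/2,t))}'\le |h(0)|+C\|\kappa\|_{BC^{k-1+\mu,(k-1+\mu)/4}(\R\times(t/2,t))}'\le C\|v\|_{Z^k_T}(1+t)^{-1/4},
\]
where $|h(0)|=0$ and \eqref{kappaest} is used in the last step.

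The one delicate point to monitor is that the decay factor $(1+t)^{-1/4}$ must be supplied \emph{only} by the $\|v_x\|'$-term of $\|v\|_{Z^k_T}$; the composition norms $\|g(v)\|'$ and $\|h(\kappa)\|'$ cannot themselves provide any time decay and must be absorbed as bare constants depending on $R$. This bookkeeping is clean because the scaled versions of \eqref{prod} and \eqref{comp} carry constants independent of the interval length (last sentence of Proposition~\ref{CompProd}), so neither the product step nor the composition step introduces stray powers of $t$. No other obstacle appears; the lemma is essentially a structural consequence of the algebra of scaled H\"older norms.
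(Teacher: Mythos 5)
Your proposal is correct and follows essentially the same route as the paper: bound $\|(1+v^2)^{-3/2}\|'$ (resp.\ $\|(1+v^2)^\gamma\|'$) by a constant via the composition estimate of Proposition~\ref{CompProd}, extract the $(1+t)^{-1/4}$ decay solely from the $\|v_x\|'$-part of the $Z^k_T$-norm through the product rule, and then treat $f'(-\kappa)-1$ as a composition with a function vanishing at $0$ (using $f'(0)=1$) evaluated at the bounded argument $\kappa$. The bookkeeping point you flag—that the constants in the scaled-norm versions of \eqref{prod} and \eqref{comp} are independent of the interval length—is exactly what the paper relies on as well.
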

  \begin{proof}
    We fix $t\in(0,T)$ and omit ``$(\R\times(t/2,t))$'' in notations of norms. It follows from Proposition~\ref{CompProd} that
  \begin{equation}\label{gammaest}
    \begin{aligned}
    \|(1+v^2)^\gamma\|_{BC^{k-1+\mu,(k-1+\mu)/4}}'&\le C,\\
    \|(1+v^2)^\gamma-1\|_{BC^{k-1+\mu,(k-1+\mu)/4}}'&\le C\|v^2\|_{BC^{k-1+\mu,(k-1+\mu)}/4}'\\
    &\le C\|v\|_{BC^{k-1+\mu,(k-1+\mu)/4}}^{\prime\ 2}\le C\|v\|_{Z^{k-1}_T}^2,
    \end{aligned}
  \end{equation}
  for all $\gamma\in\R$. Using Proposition~\ref{CompProd} again yields
  \begin{align*}
  \|\kappa\|_{BC^{k-1+\mu,(k-1+\mu)/4}}'&\le C\|v_x\|_{BC^{k-1+\mu,(k-1+\mu)/4}}'\|(1+v^2)^{-3/2}\|_{BC^{k-1+\mu,(k-1+\mu)/4}}'\\
  &\le C\|v\|_{Z^k_T}(1+t)^{-1/4},\\
  \|f'(-\kappa)-1\|_{BC^{k-1+\mu,(k-1+\mu)/4}}'&\le C\|\kappa\|_{BC^{k-1+\mu,(k-1+\mu)/4}}'\le C\|v\|_{Z^k_T}(1+t)^{-1/4}.
  \end{align*}
 The proof of Lemma~\ref{kappa} is complete.
  \end{proof}
  \begin{Lemma}\label{lemalphaest}
  Assume the same conditions as in Lemma~\ref{kappa}.  Define a function $\alpha$ as in \eqref{alpha}. Then
  \begin{equation}\label{alphaest}
  \|\alpha\|_{BC^{k-1+\mu,(k-1+\mu)/4}(\R\times(t/2,t))}'\le C\|v\|_{Z^k_T}
  \end{equation}
  for all $t\in(0,T)$.
\end{Lemma}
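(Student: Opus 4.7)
The plan is to exploit the key normalization $f'(0) = 1$ from \eqref{fassump}, which forces $\alpha$ to vanish at the trivial state $v = v_x = 0$. Once this vanishing is recognized, the estimate will follow directly from the composition estimate \eqref{comp} of Proposition~\ref{CompProd}, without any need to invoke the $(1+t)^{-1/4}$ decay from Lemma~\ref{kappa}.

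Concretely, I will define $G : \R^2 \to \R$ by
\[
G(q, r) := 1 - \frac{1}{(1+q^2)^2}\, f'\!\left(-\frac{r}{(1+q^2)^{3/2}}\right),
\]
so that $\alpha = G(v, v_x)$ pointwise, and verify the crucial identity $G(0, 0) = 1 - f'(0) = 0$. Since $f \in W^{k,\infty}_{\loc}(\R)$ with $k \ge 4$, the function $G$ belongs to $W^{\lceil k-1+\mu \rceil, \infty}_{\loc}(\R^2)$. Under the standing hypothesis $\|v\|_{Z^k_T} \le R$, the values $v$ and $v_x$ remain bounded on $\R \times (t/2, t)$ uniformly in $t$ (the latter by $(1+t)^{-1/4} R \le R$), so that $(v, v_x)$ stays in a fixed compact subset of $\R^2$; this is exactly what is required in order to apply \eqref{comp} with $\lambda = k-1+\mu$.

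Invoking \eqref{comp} on the slab $\R \times (t/2, t)$ together with $G(0,0) = 0$ will yield
\[
\|\alpha\|_{BC^{k-1+\mu, (k-1+\mu)/4}(\R \times (t/2, t))}' \le C \max\!\left\{ \|v\|_{BC^{k-1+\mu, (k-1+\mu)/4}(\R \times (t/2, t))}',\ \|v_x\|_{BC^{k-1+\mu, (k-1+\mu)/4}(\R \times (t/2, t))}' \right\},
\]
where $C = C_{\mu, R, f}$. I will then bound the first argument by $\|v\|_{BC^{k+\mu, (k+\mu)/4}(\R \times (t/2, t))}' \le \|v\|_{Z^k_T}$ using the monotonicity in Remark~\ref{Holrmk}~(2), and bound the second argument by $(1+t)^{-1/4} \|v\|_{Z^k_T} \le \|v\|_{Z^k_T}$ directly from the definition of $\|\cdot\|_{Z^k_T}$. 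Combining these two bounds delivers the claimed estimate \eqref{alphaest}.

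The main point — hardly a genuine obstacle — is the structural recognition that the normalization $f'(0) = 1$ makes $G$ vanish at the origin, so that the lemma holds without any decay factor on the right-hand side despite $\alpha$ involving the non-decaying coefficient $1/(1+v^2)^2$. The only other care required is in verifying that $(v, v_x)$ lies in a bounded region so that the $W^{\lceil k-1+\mu\rceil, \infty}_{\loc}$ hypothesis on $G$ is applicable, and this follows immediately from the very definition of $\|\cdot\|_{Z^k_T}$.
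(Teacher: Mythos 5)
Your proposal is correct, and it reaches \eqref{alphaest} by a more direct route than the paper. The paper does not apply the composition estimate to the full nonlinearity at once: it splits $\alpha$ into pieces built from $(1+v^2)^{-2}-1$ and $f'(-\kappa)-1$, and then invokes the bounds \eqref{gammaest} and \eqref{fkappaest} already established in (the proof of) Lemma~\ref{kappa}, arriving at $\|\alpha\|'\le C\|v\|_{Z^k_T}^2+C\|v\|_{Z^k_T}(1+t)^{-1/4}\le C\|v\|_{Z^k_T}$. You instead write $\alpha=G(v,v_x)$ with $G(0,0)=1-f'(0)=0$ and apply \eqref{comp} of Proposition~\ref{CompProd} (in its scaled-norm form) once, which bypasses Lemma~\ref{kappa} entirely; the underlying mechanism is the same in both arguments, namely the normalization $f'(0)=1$ making the nonlinearity vanish at the trivial state, so no decay factor is needed on the right-hand side. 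What your version buys is brevity; what the paper's decomposition buys is reuse of the $\kappa$-estimates (needed anyway for Lemma~\ref{lemFest}) and a finer structure of the bound (a quadratic non-decaying part plus a linear part decaying like $(1+t)^{-1/4}$), even though that extra information is not used in \eqref{alphaest} itself. Two small caveats, neither of which goes beyond what the paper itself implicitly does: since $f\in W^{k,\infty}_{\mathrm{loc}}$, the function $G$ inherits only the regularity of $f'$, i.e.\ $G\in W^{k-1,\infty}_{\mathrm{loc}}(\R^2)$ rather than $W^{\lceil k-1+\mu\rceil,\infty}_{\mathrm{loc}}=W^{k,\infty}_{\mathrm{loc}}$ as you state, so the application of \eqref{comp} with $\lambda=k-1+\mu$ involves exactly the same derivative bookkeeping the paper relies on when it estimates $\|f'(-\kappa)-1\|'$ by $\|\kappa\|'$; and since Proposition~\ref{CompProd} is stated for $A\in W^{\lceil\lambda\rceil,\infty}(\R^k)$ globally, you should make explicit the standard truncation of $G$ outside the compact set where $(v,v_x)$ ranges (your boundedness observation, $\|v\|_{L^\infty}\le R$ and $\|v_x\|_{L^\infty}\le R$ from $\|v\|_{Z^k_T}\le R$, is precisely what justifies this).
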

\begin{proof}
   We fix $t\in(0,T)$ and omit ``$(\R\times(t/2,t))$'' in notations of norms. 
   We deduce from Proposition~\ref{CompProd}, \eqref{fkappaest}, and \eqref{gammaest} that
    \begin{align*}
      \|\alpha\|_{BC^{k-1+\mu,(k-1+\mu)/4}}'
      &\le C\left(\|f'(-\kappa)\|_{BC^{k-1+\mu,(k-1+\mu)/4}}'+1\right)\|(1+v^2)^{-2}-1\|_{BC^{k-1+\mu,(k-1+\mu)/4}}'\\
      &\quad +C\left(\|(1+v^2)^{-2}\|_{BC^{k-1+\mu,(k-1+\mu)/4}}'+1\right)\|f'(-\kappa)-1\|_{BC^{k-1+\mu,(k-1+\mu)/4}}'\\
      &\le C\|v\|_{Z^k_T}^2+C\|v\|_{Z^k_T}(1+t)^{-1/4}\le C\|v\|_{Z^k_T}.
    \end{align*}
The proof of Lemma~\ref{lemalphaest} is complete.
\end{proof}
\begin{Lemma}\label{lemFest}
  Assume the same conditions as in Lemma~\ref{kappa}. Define a function $F$ as in \eqref{F}. Then
  \begin{equation}\label{Fest}
    \|F\|_{BC^{k-1+\mu,(k-1+\mu)/4}(\R\times(t/2,t))}'\le C\|v\|_{Z^k_T}^3(1+t)^{-1/2},
  \end{equation}
  for all $t\in(0,T)$.
\end{Lemma}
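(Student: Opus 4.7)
The plan is to factor $F$ into pieces, each of which has already been controlled in the previous lemmas, and then apply the product rule from Proposition~\ref{CompProd}. Specifically, write
\[
F = 3 \cdot v \cdot v_x \cdot v_x \cdot (1+v^2)^{-3} \cdot f'(-\kappa),
\]
and bound the $BC^{k-1+\mu,(k-1+\mu)/4}(\R\times(t/2,t))'$ norm of the product by the product of the norms of the five factors (Proposition~\ref{CompProd}(i) with appropriate $k$, plus the trivial bound for the constant $3$).

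Next, I would estimate each factor. For the $v$ factor, Remark~\ref{Holrmk}(2) gives $\|v\|_{BC^{k-1+\mu,(k-1+\mu)/4}}'\le \|v\|_{BC^{k+\mu,(k+\mu)/4}}'\le\|v\|_{Z^k_T}$. For each of the two $v_x$ factors, the definition of $\|\cdot\|_{Z^k_T}$ directly yields
\[
\|v_x\|_{BC^{k-1+\mu,(k-1+\mu)/4}(\R\times(t/2,t))}'\le\|v\|_{Z^k_T}(1+t)^{-1/4},
\]
which supplies the two factors of $(1+t)^{-1/4}$ that combine to give the claimed $(1+t)^{-1/2}$. For the factor $(1+v^2)^{-3}$, use the bound \eqref{gammaest} from the proof of Lemma~\ref{kappa} (with $\gamma=-3$) to get a bound by a constant $C$ depending only on $R$. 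Finally, for $f'(-\kappa)$, apply \eqref{fkappaest} from Lemma~\ref{kappa} and the triangle inequality to conclude that $\|f'(-\kappa)\|_{BC^{k-1+\mu,(k-1+\mu)/4}}'\le 1+C\|v\|_{Z^k_T}(1+t)^{-1/4}\le C$.

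Multiplying these bounds gives
\[
\|F\|_{BC^{k-1+\mu,(k-1+\mu)/4}(\R\times(t/2,t))}'\le C\,\|v\|_{Z^k_T}\cdot\bigl(\|v\|_{Z^k_T}(1+t)^{-1/4}\bigr)^{2}\cdot C\cdot C=C\|v\|_{Z^k_T}^3(1+t)^{-1/2},
\]
which is the desired estimate. There is no real obstacle here: the proof is a direct application of Proposition~\ref{CompProd} combined with the bounds already established in Lemma~\ref{kappa}, and the only care required is to allocate the two $(1+t)^{-1/4}$ factors to the two $v_x$ terms (rather than to $v$) to obtain the sharp decay $(1+t)^{-1/2}$.
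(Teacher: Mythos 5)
Your proof is correct and is essentially the paper's argument: the paper simply writes $F=3v\kappa^2 f'(-\kappa)$ and multiplies the bounds $\|v\|'\le\|v\|_{Z^k_T}$, $\|\kappa\|'\le C\|v\|_{Z^k_T}(1+t)^{-1/4}$ (squared), and $\|f'(-\kappa)\|'\le C$, whereas you expand $\kappa^2$ into $v_x^2(1+v^2)^{-3}$ and pull the two $(1+t)^{-1/4}$ factors directly from the definition of $\|\cdot\|_{Z^k_T}$ — the same product estimate from Proposition~\ref{CompProd} and the same bounds \eqref{gammaest}, \eqref{fkappaest}.
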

\begin{proof}
    We fix $t\in(0,T)$ and omit ``$(\R\times(t/2,t))$'' in notations of norms. Since $F=v\kappa^2 f'(-\kappa)$, it follows from Proposition~\ref{CompProd}, \eqref{kappaest}, and \eqref{fkappaest} that
    \begin{align*}
      \|F\|_{BC^{k-1+\mu,(k-1+\mu)/4}}'
      &\le C\|v\|_{BC^{k-1+\mu,(k-1+\mu)/4}}'\|\kappa\|_{BC^{k-1+\mu,(k-1+\mu)/4}}^{\prime\ 2}\|f'(-\kappa)\|_{BC^{k-1+\mu,(k-1+\mu)/4}}'\\
      &\le C\|v\|_{Z^k_T}^3(1+t)^{-1/2}.
    \end{align*}
The proof of Lemma~\ref{lemFest} is complete.
\end{proof}

\begin{Lemma}\label{vdecayest}
  Suppose that $v\in Z^3_T$ solves \eqref{SDE''}. Then there are constants $\delta_0>0$ and $C_1>0$ such that if $\|v\|_{Z^3_T}\le\delta_0$, then $v$ belongs to $Z^k_T$ for all $k\in\Z_\ge 4$. Furthermore, $v$ satisfies
  \begin{equation}\label{vdecayest2}
  \|v\|_{Z^{k}_T}\le C_1\|v_0\|_{W^{1,\infty}}.
  \end{equation}
\end{Lemma}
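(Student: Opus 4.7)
The plan is a two-stage bootstrap. First, I would use the linear theory of Theorem~\ref{linearestimate} to lift $v$ from $Z^3_T$ to $Z^4_T$ with a clean bound in terms of $\|v_0\|_{W^{1,\infty}}$. Second, I would iterate Lemma~\ref{higherreg} to gain one order of regularity at each step, obtaining $v\in Z^k_T$ for every $k\ge 4$. Throughout, the nonlinearities $\alpha$ and $F$ defined in~\eqref{alpha}--\eqref{F} are controlled via Lemmas~\ref{kappa}--\ref{lemFest}; because each such estimate is cubic or better in $\|v\|_{Z^k_T}$, the smallness hypothesis on $\|v\|_{Z^3_T}$ drives the absorption that closes the argument.

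For the first stage, I verify the hypotheses of Theorem~\ref{linearestimate}. Lemma~\ref{lemalphaest} with $k=3$ gives
\[
\|\alpha\|_{BC^{2+\mu,(2+\mu)/4}(\R\times(t/2,t))}'\le C\|v\|_{Z^3_T}\le C\delta_0,
\]
so condition~(A) holds with $\delta=C\delta_0$ once $\delta_0$ is small enough. Lemma~\ref{lemFest} with $k=3$ yields
\[
\|F\|_{BC^{2+\mu,(2+\mu)/4}(\R\times(t/2,t))}'\le C\|v\|_{Z^3_T}^3(1+t)^{-1/2}\le C\|v\|_{Z^3_T}^3\,t^{-1/4}(1+t)^{-1/4},
\]
where the second inequality uses $t\le 1+t$. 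Since $v\in Z^3_T\subset Z^2_T$ solves the integral equation~\eqref{SDE''} $=$~\eqref{IE}, the ``furthermore'' clause of Theorem~\ref{linearestimate} upgrades $v$ to $Z^4_T$ with
\[
\|v\|_{Z^4_T}\le C\bigl(\|v_0\|_{W^{1,\infty}}+\|v\|_{Z^3_T}^3\bigr).
\]
Combined with the embedding $\|v\|_{Z^3_T}\le\|v\|_{Z^4_T}$ (Remark~\ref{Holrmk}(2) applied on each $(t/2,t)$) and the trivial estimate $\|v\|_{Z^3_T}^3\le\delta_0^2\|v\|_{Z^3_T}$, shrinking $\delta_0$ to absorb the cubic term into the left-hand side produces $\|v\|_{Z^3_T}\le 2C\|v_0\|_{W^{1,\infty}}$, whence $\|v\|_{Z^4_T}\le C_1\|v_0\|_{W^{1,\infty}}$.

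For the second stage, I would proceed by induction on $\ell\ge 4$. Assuming $v\in Z^\ell_T$ with $\|v\|_{Z^\ell_T}\le C(\ell)\|v_0\|_{W^{1,\infty}}$, Lemmas~\ref{lemalphaest} and~\ref{lemFest} applied with index $\ell$ give
\[
\sup_{t\in(0,T)}\|\alpha\|_{BC^{\ell-1+\mu,(\ell-1+\mu)/4}(\R\times(t/2,t))}'<\infty
\]
and $\|F\|_{BC^{\ell-1+\mu,(\ell-1+\mu)/4}(\R\times(t/2,t))}'\le C\|v\|_{Z^\ell_T}^3\,t^{-1/4}(1+t)^{-1/4}$, so the hypotheses of Lemma~\ref{higherreg} with parameter $k=\ell-1\ge 3$ are met. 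That lemma then yields $v\in Z^{\ell+1}_T$ with
\[
\|v\|_{Z^{\ell+1}_T}\le C\bigl(\|v_0\|_{W^{1,\infty}}+\|v\|_{Z^\ell_T}^3\bigr)\le C(\ell+1)\|v_0\|_{W^{1,\infty}},
\]
closing the induction.

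The main technical step is the absorption in the first stage. Because $Z^3_T$ and $Z^4_T$ differ only in the order of derivative control, the linear estimate bounds the stronger $Z^4_T$ norm in terms of the weaker $Z^3_T$ norm plus initial data; closing this loop depends essentially on the \emph{a priori} smallness hypothesis $\|v\|_{Z^3_T}\le\delta_0$ rather than mere finiteness. Once this closure is achieved for $\ell=4$, the subsequent inductive steps are essentially linear, since at each step the cubic source $\|v\|_{Z^\ell_T}^3$ is harmless once $\|v_0\|_{W^{1,\infty}}$ is small.
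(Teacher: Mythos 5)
Your proposal is correct and follows essentially the same route as the paper: first the nonlinear estimates (Lemmas~\ref{lemalphaest} and~\ref{lemFest}) feed Theorem~\ref{linearestimate} to lift $v$ into $Z^4_T$, with the cubic term absorbed via the smallness of $\|v\|_{Z^3_T}$ exactly as in the paper's inequality $\|v\|_{Z^4_T}\le C(\|v_0\|_{W^{1,\infty}}+\|v\|_{Z^3_T}^2\|v\|_{Z^4_T})$, and then induction on $k$ via Lemma~\ref{higherreg} (applied with parameter $k_0-1$) gives all higher orders. No substantive differences from the paper's argument.
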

\begin{proof}
  We prove \eqref{vdecayest2} by the induction on $k$. We first prove the case $k=4$. We assume that $\|v\|_{Z^3_T}\le 1$. It follows from \eqref{alphaest}, and \eqref{Fest} that
  \begin{gather*}
  \|\alpha\|_{BC^{2+\mu,(2+\mu)/4}(\R\times(t/2,t))}'\le C\|v\|_{Z^3_T},\\
  \|F\|_{BC^{2+\mu,(2+\mu)/4}(\R\times(t/2,t))}'\le C\|v\|_{Z^3_T}^3(1+t)^{-1/2},
  \end{gather*}
  for all $t\in(0,T)$. By Theorem~\ref{linearestimate}, $v$ belongs to $Z^4_T$ and satisfies
  \[
  \|v\|_{Z^4_T}\le C\left(\|v_0\|_{W^{1,\infty}}+\|v\|_{Z^3_T}^3\right)\le C\left(\|v_0\|_{W^{1,\infty}}+\|v\|_{Z^3_T}^2\|v\|_{Z^4_T}\right).
  \]
  This implies that $\|v\|_{Z^4_T}\le C\|v_0\|_{W^{1,\infty}}$ if $\|v\|_{Z^3_T}$ is sufficiently small. The proof in the case $k=4$ is complete.
  
  We next assume that $\eqref{vdecayest2}$ holds in the case $k=k_0\in\Z_{\ge 4}$. Then \eqref{alphaest} and \eqref{Fest} imply that
  \begin{gather*}
      \|\alpha\|_{BC^{k_0-1+\mu,(k_0-1+\mu)/4}(\R\times(t/2,t))}'\le C\|v\|_{Z^{k_0}_T},\\
  \|F\|_{BC^{k_0-1+\mu,(k_0-1+\mu)/4}(\R\times(t/2,t))}'\le C\|v\|_{Z^{k_0}_T}^3(1+t)^{-1/2}.
  \end{gather*}
  Thus Lemma~\ref{higherreg} yields \eqref{vdecayest2} with $k=k_0+1$.
 By induction, we obtain $v\in Z^k_T$ for all $k\in Z^k_T$ for all $k\in\Z_{\ge 4}$, and complete the proof of Lemma~\ref{vdecayest}.
\end{proof}

\begin{Lemma}\label{utrap}
  Suppose that $u\in Y$. There is a number $\varepsilon_0>0$ such that if $\|(u_0)_x\|_{W^{1,\infty}}<\varepsilon_0$, then the solution $u\in X_{u_0,T(u_0)}$ to problem \eqref{SDE'} satisfies $u_x\in Z^k_{T(u_0)}$ for all $k\in\Z_{\ge 4}$, and the estimate \eqref{vdecayest2}.
\end{Lemma}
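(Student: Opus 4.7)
The plan is to run a continuity/bootstrap argument in $T$, pivoting on Lemma~\ref{vdecayest}. Set $v := u_x$ and $v_0 := (u_0)_x$, so that $v$ satisfies the integral equation \eqref{SDE''}. Whenever $\|v\|_{Z^3_T} \le \delta_0$, Lemma~\ref{vdecayest} yields $\|v\|_{Z^k_T} \le C_1 \|v_0\|_{W^{1,\infty}}$ for every $k \in \Z_{\ge 4}$, and in particular $\|v\|_{Z^3_T} \le \|v\|_{Z^4_T} \le C_1 \|v_0\|_{W^{1,\infty}}$ by the monotonicity of the scaled H\"older norms in $k$. Choosing $\varepsilon_0 \le \delta_0/(2 C_1)$, the hypothesis $\|v_0\|_{W^{1,\infty}} < \varepsilon_0$ then forces $\|v\|_{Z^3_T} \le \delta_0/2$, which strictly improves the assumed bound and closes the bootstrap.

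For the setup, I take the local-in-time solution $u \in X_{T(u_0), u_0}$ given by Theorem~\ref{locexistence}. The regularity $u - u_0 \in BC^{4+\mu, 1+\mu/4}(\R \times (0, T))$, together with the boundedness of $(u_0)_x$, makes $\|v\|_{Z^3_T}$ finite for every $T < T(u_0)$, and renders $T \mapsto \|v\|_{Z^3_T}$ non-decreasing and continuous on $(0, T(u_0))$.

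The main obstacle is the quantitative initial bound: I need $\|v\|_{Z^3_{T_0}} \le \delta_0$ for some $T_0 > 0$ so that the set $\{T : \|v\|_{Z^3_T} \le \delta_0\}$ is non-empty. To obtain it, I decompose $v = e^{-t \partial_x^4} v_0 + r(t)$, where $r$ denotes the integral remainder in \eqref{SDE''} and satisfies $r(\cdot, 0) = 0$. Proposition~\ref{Smoothing} bounds the linear part uniformly in $T$ by $\|e^{-t \partial_x^4} v_0\|_{Z^3_T} \le C \|v_0\|_{W^{1,\infty}}$. For the remainder, the nonlinearities $\alpha v_{xx}$ and $F$ are controlled by powers of $\|v\|_{Z^3_T}$ via Lemmas~\ref{lemalphaest} and \ref{lemFest}, and combining these with Lemma~\ref{123'} and the contractivity Proposition~\ref{contraction} produces $\|r\|_{Z^3_T} \to 0$ as $T \to 0$. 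Thus $\limsup_{T \to 0^+} \|v\|_{Z^3_T} \le C \|v_0\|_{W^{1,\infty}} < C \varepsilon_0$, which is smaller than $\delta_0/2$ once $\varepsilon_0$ is chosen sufficiently small.

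With the initial bound at hand, the conclusion is standard. Define $T^* := \sup\{T \in (0, T(u_0)) : \|v\|_{Z^3_T} \le \delta_0\}$; the initial bound guarantees $T^* > 0$. Suppose for contradiction $T^* < T(u_0)$. For every $T \in (0, T^*)$, monotonicity gives $\|v\|_{Z^3_T} \le \delta_0$, so the bootstrap improvement forces $\|v\|_{Z^3_T} \le \delta_0/2$. By continuity of $T \mapsto \|v\|_{Z^3_T}$, we also get $\|v\|_{Z^3_{T^*}} \le \delta_0/2 < \delta_0$, and continuity once more produces some $T > T^*$ with $\|v\|_{Z^3_T} \le \delta_0$, contradicting the supremum. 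Hence $T^* = T(u_0)$, and applying Lemma~\ref{vdecayest} on each $T < T(u_0)$ yields $v \in Z^k_T$ with $\|v\|_{Z^k_T} \le C_1 \|v_0\|_{W^{1,\infty}}$ for every $k \in \Z_{\ge 4}$, completing the proof.
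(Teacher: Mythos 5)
Your skeleton (fix $\varepsilon_0\lesssim\delta_0/C_1$, establish a small-time bound, define $T^*=\sup\{T:\|v\|_{Z^3_T}\le\delta_0\}$, and close the loop with Lemma~\ref{vdecayest}) is the same as the paper's, but the two steps that carry the argument are not justified as written. The more serious one is the asserted continuity of $T\mapsto\|v\|_{Z^3_T}$. The regularity you cite, $u-u_0\in BC^{4+\mu,1+\mu/4}(\R\times(0,T))$, gives $v=u_x$ only $BC^{3+\mu,(3+\mu)/4}$ regularity, and for a function of exactly that class the top-order seminorm $[v]_{C^{3+\mu,(3+\mu)/4}(\R\times(t/2,t))}$ over the moving window need not be right-continuous at $t=T^*$: the windows with $t>T^*$ contain times beyond $T^*$ that are seen by no window with $t\le T^*$, so the norm can jump upward there, and right-continuity (which is exactly what you invoke to produce some $T>T^*$ with $\|v\|_{Z^3_T}\le\delta_0$) requires regularity of $v$ strictly above the $(3+\mu)$-level near $T^*$. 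The paper treats precisely this point: it uses $v\in C^{4+\mu,1+\mu/4}_{\textrm{time loc}}(\R\times(0,T(u_0)))$ together with Proposition~\ref{contraction}, stepping \emph{down one order} from the $Z^4$-level bound $\|v\|_{Z^4_{T'}}\le C_1\varepsilon_0$ supplied by Lemma~\ref{vdecayest} to a $Z^3$-level bound $\le 2C_1\varepsilon_0<\delta_0$ on a slightly larger interval $(0,T'+\tau)$. You need either this two-tier argument or an explicit justification of time-local regularity of $v$ above order $3+\mu$ past $T^*$; neither is supplied, and this is where your continuation argument has a genuine gap.

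The second weak point is the initial bound. The claim $\|r\|_{Z^3_T}\to 0$ as $T\to 0$ does not follow from Lemmas~\ref{lemalphaest}, \ref{lemFest} and \ref{123'}: applied with $\varepsilon\simeq C(\|v\|_{Z^3_{T_1}})$ (a finite but not yet small quantity), those estimates bound the derivative component of the Duhamel term only by $O(\varepsilon)(1+t)^{-1/4}$, which is $O(1)$, not $o(1)$, as $T\to 0$. The conclusion is still reachable, but only via the mechanism the paper uses directly and without any semigroup decomposition: since $u_x(\cdot,0)=(u_0)_x$ and $u_x\in BC^{3+\mu,(3+\mu)/4}(\R\times(0,T))$, Proposition~\ref{contraction} combined with the $T^{1/4}$ factors built into the scaled norms gives $\|u_x\|_{Z^3_T}\le\|(u_0)_x\|_{W^{1,\infty}}+CT^\beta\|u_x\|_{BC^{3+\mu,(3+\mu)/4}(\R\times(0,T))}<2\varepsilon_0$ for $T$ small. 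Your decomposition $v=e^{-t\partial_x^4}v_0+r$ can be repaired along the same lines (using $r(\cdot,0)=0$, $r_x(\cdot,0)=0$ and contractivity for the undamped $L^\infty$ pieces), but the route through Lemma~\ref{123'} by itself does not deliver the decay you claim, and it is an unnecessary detour compared with the paper's direct estimate.
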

\begin{proof}
    Let $\delta_0$ and $C_1$ be as in Lemma~\ref{vdecayest}. We define $\varepsilon_0:=(2+2C_1)^{-1}\delta_0$. It suffices to prove that $\|u_x\|_{Z^3_{T(u_0)}}\le\delta_0$, since this together with Lemma~\ref{vdecayest} implies \eqref{vdecayest2}.
    
    It follows from the contractivity property of lower order H\"{o}lder norms that
    \begin{align*}
    \|u_x\|_{Z^3_T}&\le\|u_x\|_{BC^{3+\mu,(3+\mu)/4}(\R\times(0,T))}'+\|u_{xx}\|_{BC^{2+\mu,(2+\mu)/4}(\R\times(0,T))}'\\
    &\le \|u_x\|_{L^\infty(\R\times(0,T))}+\|u_{xx}\|_{L^\infty(\R\times(0,T))}\\
    &\quad +2T^{1/4}\sum_{\ell=0}^3\|\partial_x^{\ell+1}u\|_{L^\infty(\R\times(0,T))}+2T^{(1+\mu)/4}[u_x]_{C^{3+\mu,(3+\mu)/4}(\R\times(0,T))}\\
    &\le \|(u_0)_x\|_{W^{1,\infty}(\R)}+CT^\beta\|u_x\|_{BC^{3+\mu,(3+\mu)/4}(\R\times(0,T))}\le \varepsilon_0+CT^\beta\|u_x\|_{BC^{3+\mu,(3+\mu)/4}(\R\times(0,T))}
    \end{align*}
    for $T\in(0,1)$, where $\beta\in(0,1/4]$ is a constant. In particular, it follows that $\|u_x\|_{Z^3_{T_0}}<2\varepsilon_0<\delta_0$ for a sufficiently small $T_0>0$. Thus    
    \[
    \|u_x\|_{Z_{T_0}^4}\le C_1\varepsilon_0<\delta_0
    \]
    by the choice of $\delta_0$. This implies that
    \[
  T':=\sup\{T\in(0,T(u_0));\ \textrm{$u$ satisfies $\|u_x\|_{Z_{T}^3}\le\delta_0$}\}>0.
  \]
    It remains to prove that $T'=T(u_0)$. Assume on the contrary that $T'<T(u_0)$. Then $v=u_x$ satisfies $\|v\|_{Z_{T'}^4}\le\delta_0$ and therefore
    $\|v\|_{Z_{T'}^4}\le C_1\varepsilon_0$ by Lemma~\ref{vdecayest}.
    In particular, we have
  \[
  \|v\|_{BC^{4+\mu,1+\mu/4}(\R\times(T'/2,T'))}'+(1+T')^{1/4}\|v_x\|_{C^{3+\mu,(3+\mu)/4}(\R\times(T'/2,T'))}'\le C_1\varepsilon_0.
  \]
  Since $v\in C^{4+\mu,1+\mu/4}_{\textrm{time loc}}(\R\times(0,T(u_0)))$, the contractivity property of lower order H\"{o}lder norms implies that for some small $\tau>0$, we have
    \[
    \|v\|_{BC^{3+\mu,(3+\mu)/4}(\R\times(t/2,t))}'+(1+t)^{1/4}\|v_x\|_{C^{2+\mu,(2+\mu)/4}(\R\times(t/2,t))}'\le 2C_1\varepsilon_0
    \]
    for $t\in[T',T'+\tau)$. This together with $\|v\|_{Z^4_{T'}}\le C_1\varepsilon_0$ implies that $\|v\|_{Z^3_{T'+\tau}}\le 2C_1\varepsilon_0$. Since $2C_1\varepsilon_0<\delta_0$, this contradicts the choice of $T'$.
\end{proof}
Now we are ready to complete the proof of Theorem~\ref{GE}.
\begin{proof}[Proof of Theorem~\ref{GE}]
  Let $\varepsilon_0>0$ be as in Lemma~\ref{utrap}. We first prove that $T(u_0)=\infty$. Assume on the contrary that $T(u_0)<\infty$. By Lemma~\ref{utrap}, the set $\{u_x(t)\}_{t\in [T(u_0)/2,T(u_0))}$ is a bounded subset of $BC^{3+\mu}(\R)$. This together with Theorem~\ref{locexistence} implies that there is $\tau>0$ such that problem \eqref{SDE'} with $u_0$ replaced with $u(t_0)$ possesses a solution in $X_{\tau,u(t_0)}$ for any $t_0\in[T(u_0)/2,T(u_0))$. Taking $t_0$ so that $t_0+\tau>T(u_0)$, we see the solution $u$ is extended to $\widetilde{u}\in X_{t_0+\tau,u_0}$, which contradicts the definition of $T(u_0)$.
  
    We finally estimate $u_t$. It follows from \eqref{SDE'}, \eqref{fkappaest}, \eqref{gammaest}, and \eqref{vdecayest2} that
  \begin{equation}\label{utest}
  \begin{aligned}
    &\|u_t\|_{BC^{k-3+\mu,(k-3+\mu)/4}(\R\times(t/2,t))}'=\left\|\left(\frac{1}{(1+u_x^2)^{1/2}}\left(f(-\kappa)-1\right)_x\right)_x\right\|_{BC^{k-3+\mu,(k-3+\mu)/4}(\R\times(t/2,t))}'\\
    &\le t^{-1/4}\left\|\frac{1}{(1+u_x^2)^{1/2}}\left(f(-\kappa)-1\right)_x\right\|_{BC^{k-2+\mu,(k-2+\mu)/4}(\R\times(t/2,t))}'\\
    &\le Ct^{-1/4}\left\|(1+u_x^2)^{-1/2}\right\|_{BC^{k-2+\mu,(k-2+\mu)/4}(\R\times(t/2,t))}'\cdot t^{-1/4}\left\|f(-\kappa)-1\right\|_{BC^{k-2+\mu,(k-2+\mu)/4}(\R\times(t/2,t))}'\\
    &\le C\|v\|_{Z^k_\infty} t^{-1/2}(1+t)^{-1/4}\le C\|(u_0)_x\|_{W^{1,\infty}} t^{-1/2}(1+t)^{-1/4}.
  \end{aligned}
  \end{equation}
  This estimate together with \eqref{vdecayest2} clearly implies \eqref{Thmudecay}. The proof of Theorem~\ref{GE} is complete.
\end{proof}
\section{Proof of Theorem~\ref{ConvSSS}}
We are now in position to complete the  proof of Theorem~\ref{ConvSSS}. All we have to do is to justify a formal limit argument in Remark~\ref{expect} to prove the convergence to a self-similar solution to the equation \eqref{SSS}.
\begin{proof}[Proof of Theorem~\ref{ConvSSS}] 
  Let $u^\sigma$ be as in the statement. It follows from \eqref{vdecayest2} that
  \begin{equation}\label{vsigmaest}
    \|u_x^\sigma\|_{BC^{k+\mu,(k+\mu)/4}(\R\times(t/2,t))}'=\|u_x\|_{BC^{k+\mu,(k+\mu)/4}(\R\times(\sigma^4t/2,\sigma^4t))}'\le C\|(u_0)_x\|_{W^{1,\infty}}.
  \end{equation}
Furthermore, \eqref{utest} implies that
     \begin{equation}\label{usigmatest}
      \begin{aligned}
        \|u^\sigma_t\|_{BC^{k+\mu,(k+\mu)/4}(\R\times(t/2,t))}'&=\sigma^3\|u_t\|_{BC^{k+\mu,(k+\mu)/4}(\R\times(\sigma t/2,\sigma^4t))}'\\
        &\le C\|(u_0)_x\|_{W^{1,\infty}}(\sigma^4 t)^{-1/2}(1+\sigma^4t)^{-1/4}\\
        &\le C\|(u_0)_x\|_{W^{1,\infty}} t^{-3/4}.
      \end{aligned}
  \end{equation}
  It follows from \eqref{usigmatest} that
  \[
  |u^\sigma(x,t_2)-u^\sigma(x,t_1)|\le C\int_{t_1}^{t_2}t^{-3/4}\,dt\le C(t_2-t_1)^{1/4}
  \]
  for all $x\in\R$ and $0\le t_1<t_2$. This together with \eqref{vsigmaest} implies that
  \begin{gather}
   |u^\sigma(x_2,t_2)-u^\sigma(x_1,t_1)|\le C\left(|x_2-x_1|+|t_2-t_1|^{1/4}\right) \label{Equconti},\\
   |u^\sigma(x,t)|\le |u^\sigma(0,0)|+ C(|x|+t^{1/4})=\sigma^{-1}|u_0(0)|+C(|x|+t^{1/4})\label{Unifbdd},
  \end{gather}
  for $x_1,x_2,x\in\R$, $t_1,t_2,t\in[0,\infty)$.
  
  By the Ascoli--{Arzel\`a} theorem, \eqref{vsigmaest}, \eqref{usigmatest}, \eqref{Equconti}, and \eqref{Unifbdd} imply that there is a sequence $\sigma_n\nearrow\infty$ and $U\in C^{k+1+\mu,(k+1+\mu)/4}_{\textrm{time loc}}(\R\times(0,\infty))\cap C(\R\times[0,\infty))$ such that
  \[
  \lim_{n\to\infty}\|u^{\sigma_n}-U\|_{L^\infty(K)}=0
  \]
  for every compact set $K\subset\R\times[0,\infty)$, and
  \begin{gather*}
  \lim_{n\to\infty}\|\partial_x^{\ell}\partial_t^m(u^{\sigma_n}-U)\|_{L^\infty(K)}=0
  \end{gather*}
  for every compact set $K\subset\R\times(0,\infty)$ and $\ell,m\in\Z_{\ge 0}$ with $\ell+4m\le k+1$. Furthermore, $U$ satisfies the estimate
  \begin{equation}\label{Uxest}
  \|U_x\|_{BC^{k+\mu,(k+\mu)/4}(\R\times(t/2,t))}'\le C\|(u_0)_x\|_{W^{1,\infty}}
  \end{equation}
  for all $t\in(0,\infty)$.
  
  We observe that
  \begin{equation}\label{scaledeq}
  \begin{aligned}
      u^\sigma_t&=\sigma^3\left[\left(\frac{1}{(1+u_x^2)^{1/2}}\left(f\left(-\frac{u_{xx}}{(1+u_x^2)^{3/2}}\right)\right)_x\right)_x\right](\sigma x,\sigma^4t)\\
      &=\sigma\left(\frac{1}{(1+(u^\sigma_x)^2)}\left(f\left(-\frac{\sigma^{-1} u^\sigma_{xx}}{(1+(u^\sigma_x)^2)^{3/2}}\right)\right)_x\right)_x\\
      &=\left(\frac{1}{(1+(u^\sigma_x)^2)^{1/2}}\left(f_\sigma\left(-\frac{u^\sigma_{xx}}{(1+(u^\sigma_x)^2)^{3/2}}\right)\right)_x\right)_x,
   \end{aligned}
    \end{equation}
   where $f_\sigma(r):=\sigma f(\sigma^{-1}r)$.
   Since
   $\lim\limits_{\sigma\to\infty}f_\sigma(r)=r$
   by the assumption \eqref{fassump}, we deduce that
   \[
   U_t=\left(\frac{1}{(1+U_x^2)^{1/2}}\left(-\frac{U_{xx}}{(1+U_x^2)^{3/2}}\right)_x\right).
   \]
   It also follows from \eqref{IVasympt} that
   \[
   U(x,0)=\begin{cases}
     ax\ \textrm{if}\ x\ge 0,\\
     bx\ \textrm{if}\ x<0.
   \end{cases}
   \]
   Hence $U$ is a solution to problem \eqref{SSS}. On the other hand, by \cite{KL}*{Theorem~3.4}, there are constants $\varepsilon^*>0$ and $\rho^*>0$ such that problem \eqref{SSS} possesses a unique solution, which is self-similar, in the set
   $
   \left\{U;\ \|U\|_{X_\infty}<\rho^*\right\}
   $ (see \eqref{KLnorm} for the definition of $\|U\|_{X_\infty}$), 
   provided that $\max\{a,b\}<\varepsilon^*$. These together with \eqref{Uxest} and the fact that
   \[
   \|U\|_{X_\infty}\le C\sup_{t>0}\left(\|U_x\|_{L^\infty}+t^{-1/4}\|U_{xx}\|_{L^\infty}\right)(t)
   \]
   by the H\"{o}lder inequality imply that the limits $U$ of subsequences in $u^\sigma$ is unique. Hence 
   \[
    \lim_{\sigma\to\infty}\|u^\sigma-U\|_{L^\infty(K)}=0
   \]
   for every compact set $\sigma\to\infty$, if $\|(u_0)_x\|_{W^{1,\infty}}$ is sufficiently small. Furthermore,
     \begin{gather*}
\lim_{\sigma\to\infty}\|\partial_x^{\ell}\partial_t^m(u^\sigma-U)\|_{L^\infty(K)}=0
  \end{gather*}
  for every compact set $K\subset\R\times(0,\infty)$ and $\ell,m\in\Z_{\ge 0}$ with $\ell+4m\le k+1$. The proof of Theorem~\ref{ConvSSS} is complete.
\end{proof}

\section*{Acknowledgments}
The work of the first author was partly supported by Japan Society for the Promotion of Science (JSPS) through grants KAKENHI Grant Numbers 19H00639, 20K20342, 24K00531 and 24H00183 and by Arithmer Inc., Daikin Industries, Ltd.\ and Ebara Corporation through collaborative grants. 
 The work of the second author was partially funded by the DFG (Deutsche Forschungsgemeinschaft) within the project 44227998. During this time he was visiting the University of Tokyo in 2022 as JSPS fellow. Both the funding of the DFG and the hospitality of the University of Tokyo are gratefully acknowledged.
 The third author was supported by Grant-in-Aid for JSPS Fellows DC1, Grant Number 23KJ0645.

\begin{bibdiv}
\begin{biblist}
  \bib{Asa}{article}{
   author={Asai, Tomoro},
   title={Quasilinear parabolic equation and its applications to fourth
   order equations with rough initial data},
   journal={J. Math. Sci. Univ. Tokyo},
   volume={19},
   date={2012},
   pages={507--532 (2013)},
}
  \bib{AG}{article}{
   author={Asai, Tomoro},
   author={Giga, Yoshikazu},
   title={On self-similar solutions to the surface diffusion flow equations
   with contact angle boundary conditions},
   journal={Interfaces Free Bound.},
   volume={16},
   date={2014},
   pages={539--573},
}
  \bib{DY}{article}{
     author={Du, Hengrong},
   author={Yip, Nung Kwan},
   title={Stability of self-similar solutions to geometric flows},
   journal={Interfaces Free Bound.},
   volume={25},
   date={2023},
   pages={155--191},
  }
  \bib{EMS}{article}{
   author={Escher, Joachim},
   author={Mayer, Uwe F.},
   author={Simonett, Gieri},
   title={The surface diffusion flow for immersed hypersurfaces},
   journal={SIAM J. Math. Anal.},
   volume={29},
   date={1998},
   pages={1419--1433},
}
  \bib{EM}{article}{
   author={Escher, Joachim},
   author={Mucha, Piotr B.},
   title={The surface diffusion flow on rough phase spaces},
   journal={Discrete Contin. Dyn. Syst.},
   volume={26},
   date={2010},
   pages={431--453},
}{
  \bib{E}{book}{
   author={Evans, Lawrence C.},
   title={Partial differential equations},
   series    = {Grad. Stud. Math.},
   volume    = {19},
   edition   = {Second},
   publisher = {American Mathematical Society, Providence, RI},
   year      = {2010},}
}
  \bib{GG1}{article}{
   author={Garcke, H.},
   author={G\"o\ss wein, M.},
   title={On the surface diffusion flow with triple junctions in higher
   space dimensions},
   journal={Geom. Flows},
   volume={5},
   date={2020},
   pages={1--39},
}
  \bib{GG2}{article}{
   author={Garcke, H.},
   author={G\"o\ss wein, M.},
   title={Non-linear stability of double bubbles under surface diffusion},
   journal={J. Differential Equations},
   volume={302},
   date={2021},
   pages={617--661},
}
  \bib{GIK}{article}{
   author={Garcke, Harald},
   author={Ito, Kazuo},
   author={Kohsaka, Yoshihito},
   title={Linearized stability analysis of stationary solutions for surface
   diffusion with boundary conditions},
   journal={SIAM J. Math. Anal.},
   volume={36},
   date={2005},
   pages={1031--1056},
}{
  \bib{GGS}{book}{
   author={Giga, Mi-Ho},
   author={Giga, Yoshikazu},
   author={Saal, J\"urgen},
   title={Nonlinear Partial Differential Equations. Asymptotic Behavior of Solutions and Self-Similar Solutions},
   series    = {Progress in Nonlinear Differential Equations and their Applications},
   volume    = {79},
   publisher = {Birkh\"auser, Boston-Basel-Berlin},
   year      = {2010}}
}
  \bib{GM}{article}{
   author={Giga, Yoshikazu},
   author={Miyakawa, Tetsuro},
   title={Navier-Stokes flow in $\bold R^3$ with measures as initial
   vorticity and Morrey spaces},
   journal={Comm. Partial Differential Equations},
   volume={14},
   date={1989},
   pages={577--618},
  }
  \bib{Goess}{article}{
  author={G{\"o}{\ss}wein, M.},
  title={Surface diffusion flow of triple junction clusters in higher space dimensions},
  journal={Ph.D.\ Thesis},
  doi={10.5283/epub.38376},
  date={2019}
}
  \bib{Ham}{article}{
   author={Hamamuki, Nao},
   title={Asymptotically self-similar solutions to curvature flow equations
   with prescribed contact angle and their applications to groove profiles
   due to evaporation-condensation},
   journal={Adv. Differential Equations},
   volume={19},
   date={2014},
   pages={317--358},
}
  \bib{IK}{article}{
   author={Ito, Kazuo},
   author={Kohsaka, Yoshihito},
   title={Three-phase boundary motion by surface diffusion: stability of a
   mirror symmetric stationary solution},
   journal={Interfaces Free Bound.},
   volume={3},
   date={2001},
   pages={45--80},
}
  \bib{KL}{article}{
   author={Koch, Herbert},
   author={Lamm, Tobias},
   title={Geometric flows with rough initial data},
   journal={Asian J.\ Math.},
   volume={16},
   date={2012},
   number={2},
   pages={209--235},
}
  \bib{KM}{article}{
  author = {Kohn, Robert V.}
  author = {Margetis, Dionisios},
title = {Continuum Relaxation of Interacting Steps on Crystal Surfaces in $2+1$ Dimensions},
journal = {Multiscale Modeling\ \&\ Simulation},
date = {2006},
volume = {5},
pages = {729-758},
  }
  \bib{LSS}{article}{
   author={LeCrone, Jeremy},
   author={Shao, Yuanzhen},
   author={Simonett, Gieri},
   title={The surface diffusion and the Willmore flow for uniformly regular
   hypersurfaces},
   journal={Discrete Contin. Dyn. Syst. Ser. S},
   volume={13},
   date={2020},
   pages={3503--3524},
}
  \bib{LS1}{article}{
   author={LeCrone, Jeremy},
   author={Simonett, Gieri},
   title={On well-posedness, stability, and bifurcation for the axisymmetric
   surface diffusion flow},
   journal={SIAM J. Math. Anal.},
   volume={45},
   date={2013},
   pages={2834--2869},
}
  \bib{LS2}{article}{
   author={LeCrone, Jeremy},
   author={Simonett, Gieri},
   title={On quasilinear parabolic equations and continuous maximal
   regularity},
   journal={Evol. Equ. Control Theory},
   volume={9},
   date={2020},
   pages={61--86},
}
  \bib{LLMM}{article}{
  author = {Liu, Jian-Guo},
  author = {Lu, Jianfeng},
  author = {Margetis, Dionisios},
  author = {Marzuola, Jeremy L.},
  title = {Asymmetry in crystal facet dynamics of homoepitaxy by a continuum model},
  journal = {Physica D:\ Nonlinear Phenomena},
  date = {2019},
  volume = {393},
  pages = {54-67},
  }
  \bib{Mu1}{article}{
   author={Mullins, W.~W.},
   title={Two-dimensional motion of idealized grain boundaries},
   journal={J.\ Appl.\ Phys.},
   volume={27},
   date={1956},
   number={},
   pages={900--904},
}
  \bib{Mu2}{article}{
   author={Mullins, W.~W.},
   title={Theory of thermal grooving},
   journal={J.\ Appl.\ Phys.},
   volume={28},
   date={1957},
   number={},
   pages={333--339},
}
  \bib{Sol}{article}{
   author={Solonnikov, V.~A.},
   title={On boundary value problems for linear parabolic systems of
   differential equations of general form},
   language={Russian},
   journal={Trudy Mat.\ Inst.\ Steklov.},
   volume={83},
   date={1965},
   pages={3--163},
}
  \bib{Whe1}{article}{
   author={Wheeler, Glen},
   title={Surface diffusion flow near spheres},
   journal={Calc. Var. Partial Differential Equations},
   volume={44},
   date={2012},
   pages={131--151},
}
  \bib{Whe2}{article}{
   author={Wheeler, Glen},
   title={On the curve diffusion flow of closed plane curves},
   journal={Ann. Mat. Pura Appl. (4)},
   volume={192},
   date={2013},
   pages={931--950},
}
  \bib{Whe3}{article}{
   author={Wheeler, Glen},
   title={Convergence for global curve diffusion flows},
   journal={Math. Eng.},
   volume={4},
   date={2022},
   pages={Paper No. 001, 13},
}
\end{biblist}
  \end{bibdiv}
\end{document}